\renewcommand{\email}[2][]{%
  \ifx\emails\@empty\relax\else{\g@addto@macro\emails{,\space}}\fi%
  \@ifnotempty{#1}{\g@addto@macro\emails{\textrm{(#1)}\space}}%
  \g@addto@macro\emails{#2}%
}
 \newcommand{\ssl}{\mathfrak{sl}(2,\mathbb{C})}
 \newcommand{\gtwoC}{\mathfrak{gl}(2,\mathbb{C})}
 \newcommand{\ad}{\text{ad}}
 \newcommand{\soo}{\mathfrak{sp}(4,\mathbb{C})}
 \newcommand{\st}{\mathfrak{sl}(3,\mathbb{C})}
  \theoremstyle{definition}
  \newtheorem{definition}{Definition}[section]
  \theoremstyle{plain}
  \newtheorem{lemma}[definition]{Lemma}
  \newtheorem{theorem}[definition]{Theorem}
\newcommand\Cb{\mathbb{C}}
\newcommand\alg{\mathfrak{sp}(4,\Cb)}
\newcommand\g{\mathfrak{g}}
\newcommand\borel{\mathfrak{b}}
\newcommand\p{\mathfrak{p}}
\newcommand\nf{\mathfrak{n}}
\newcommand\algn{\mathfrak{n}}
\newcommand\alga{ \frak{a}}
\newcommand\algt{ \frak{t}}
\newcommand\sz{\frak{s}_{Z}}
\newcommand\SptwoC{Sp(4, \Cb)}
\title{The subalgebras of the rank two symplectic Lie algebra}
\begin{document}
\date{\today}                                           

\author[Andrew Douglas]{Andrew Douglas$^{1,2,3}$}
\address{$^1$Ph.D. Programs in Mathematics and Physics, The Graduate Center, City University of New York, New York, NY,  10016, USA}
\address{$^2$Department of Mathematics, New York City College of Technology, City University of New York, Brooklyn, NY, 11201, USA}

\author[Joe Repka]{Joe Repka$^3$}
\address{$^3$Department of Mathematics, University of Toronto, Toronto, ON, M5S 2E4, Canada}
\email{adouglas2@gc.cuny.edu,repka@math.toronto.edu}


\keywords{Symplectic algebra, classification of subalgebras} 
\subjclass[2010]{17B05, 17B10, 17B20, 17B30}

\begin{abstract}
 The semisimple subalgebras of the rank $2$ symplectic Lie algebra $\soo$ are well-known, and we recently classified its Levi decomposable subalgebras.  In this article, we classify the solvable subalgebras of $\soo$, up to inner automorphism.   This completes the classification of the subalgebras of $\soo$.  More broadly speaking, in completing the classification of the subalgebras of $\soo$ we have completed the classification of the subalgebras of the rank $2$ semisimple Lie algebras.
\end{abstract}

\maketitle


\section{Introduction}

Semisimple subalgebras of semisimple Lie algebras have been extensively studied  \cite{degraafd, dynkin, dynkin2, lorent, min}.   For instance, the semisimple subalgebras of the exceptional Lie algebras have been classified, up to inner automorphism \cite{min}.  As another important example,   de Graaf \cite{degraafd} classified the semisimple subalgebras of the simple Lie algebras of ranks $\le 8$, up to linear equivalence, which is somewhat weaker than a classification up to inner automorphism.

Much less research has examined general subalgebras of semisimple Lie algebras.  By Levi's Theorem [\cite{levi}, Chapter II, Section 2],  a subalgebra of a complex semisimple Lie algebra is either semisimple, solvable, or a nontrivial semidirect sum of the first two.  A subalgebra that is a  nontrivial semidirect sum of a semisimple subalgebra with a solvable subalgebra is called a Levi decomposable subalgebra. 

We have made considerable progress towards classifying both solvable and Levi decomposable subalgebras of semisimple Lie algebras.   Most relevant among this work to the present paper is our classification of the solvable, and Levi decomposable subalgebras of the rank $2$, semisimple Lie algebras $\ssl\oplus \ssl$ \cite{drsof}, and $\st$ \cite{a2}. Since the classifications of semisimple subalgebras of  $\ssl\oplus \ssl$, and $\st$ are well-known, our work completes the classification of subalgebras of these rank $2$, semisimple Lie algebras. 

The aim of the current paper is to complete the classification of subalgebras of the rank $2$, symplectic Lie algebra $\soo$--the remaining rank $2$, classical, semisimple Lie algebra whose subalgebras have not been classified.  The semisimple  subalgebras of $\soo$ are well-known \cite{degraafd}, and the authors recently classified its Levi decomposable subalgebras \cite{dc2}.  

Hence, in this article, we classify the most difficult case: the solvable subalgebras of $\soo$, up to inner automorphism  (equivalently,  up to conjugacy by the symplectic group $Sp(4,\mathbb{C})$).   By Levi's theorem, this completes the classification of the subalgebras of $\soo$.  

 In addition, Mayanskiy \cite{may} recently posted a classification of the subalgebras of the exceptional Lie algebra $G_2$. In light of the above mentioned work,  this article completes of the classification of the subalgebras of the rank $2$ semisimple Lie algebras.

In addition to the intrinsic mathematical significance of  classifications of  subalgebras of Lie algebras (or classifications of subgroups of corresponding  Lie groups), such classifications also have physical significance and mathematical application, some of which are listed below:
\begin{itemize}
\item If a system of differential equations is invariant under a Lie group, then its
subgroups can be used to construct group invariant solutions \cite{pjoliver}. 
\item Subgroups of the symmetry groups of nonlinear partial differential equations provide a method for performing
symmetry reduction (reducing  the number of independent
variables) \cite{wlong, ghp,dklw}.
\item A knowledge of the subgroup structure of a Lie group $G$ is needed if we are interested in considering all possible contractions of $G$ to other groups \cite{pw77}.
\item Physical models--such as the vibron model, and the interacting boson model-- use chains of subalgebras, and these subalgebras need to be explicitly described in application  \cite{iachello}.
\end{itemize}

The article is organized as follows. In Section \ref{solvable}, we describe two partial classifications of solvable Lie algebras that will be used in our
classification of solvable subalgebras of $\soo$: The classification of de Graaf \cite{degraafa}, and that described by  \v{S}nobl and Winternitz in \cite{levi}.  
Section $3$ contains preliminary background on $\soo$, and in Section $4$ we classify the one-dimensional subalgebras of $\soo$.

In Section $5$  we develop preliminary results that will be used in classifications of subalgebras of dimension greater than one. In Sections $6$, $7$, and $8$ we classify the two-, three-, and four-dimensional solvable subalgebras, respectively. Section $9$  contains the classification of the five-, and six-dimensional solvable subalgebras.

Finally, in Section \ref{isomorphs}, we identify our classification of solvable subalgebras of $\soo$ with respect to the classification of solvable Lie algebras of de Graaf \cite{degraafa}, and that described by \v{S}nobl and Winternitz in \cite{levi}. The complete classification of the subalgebras of $\soo$ is summarized in Tables \ref{onedimuu} to \ref{fivesixdimuu}.  

All Lie algebras and representations in this article  are finite dimensional, and over the complex numbers, unless otherwise stated.

\section{Solvable Lie algebras of small dimension}\label{solvable}

A full classification of solvable Lie algebras is not known and thought to be an impossible task.  However, partial classifications of solvable Lie algebras do exist.  Two such partial classifications are that of de Graaf \cite{degraafa}, and that described by \v{S}nobl and Winternitz in \cite{levi}. The classification of solvable subalgebras of $\soo$ in this article will be described with respect to both of these classifications, and  we consider both of these classifications in this section.

De Graaf classified the solvable Lie algebras in dimensions $\leq 4$ over a field $\mathbb{F}$ of any characteristic \cite{degraafa}. In his classification, de Graaf does not distinguish between indecomposable and decomposable Lie algebras.  We describe the classification in its entirety up to and including dimension $3$, and include only those four-dimensional solvable subalgebras which appear in this article:
\begin{equation}
\begin{array}{llll}
J & \text{The abelian Lie algebra of dimension $1$}
\end{array}
\end{equation}
\begin{equation}
\arraycolsep=2pt\def\arraystretch{1.5}
\begin{array}{llll}
K^1 & \text{The abelian Lie algebra of dimension $2$}\\
K^2 & [x_1, x_2]=x_1 
\end{array}
\end{equation}
\begin{equation}
\arraycolsep=2pt\def\arraystretch{1.5}
\begin{array}{llll}
L^1 & \text{The abelian Lie algebra of dimension $3$}\\
L^2 & [x_3,x_1]=x_1, [x_3, x_2]=x_2 \\
L^3_{A} & [x_3, x_1]=x_2, [x_3, x_2]=A x_1+x_2 \\
L^{4}_A & [x_3, x_1]=x_2, [x_3, x_2]=A x_1 
\end{array}
\end{equation}
Note that $L^3_{A}\cong L^3_{B}$ if and only if $A=B$; and 
$L^{4}_A \cong L^{4}_B$ if and only if there is an $\alpha \in \mathbb{F}^*$ with $A=\alpha^2 B$.  
\begin{small}
\begin{equation}
\arraycolsep=2pt\def\arraystretch{1.5}
\begin{array}{llll}
M^2 & [x_4, x_1]=x_1, [x_4, x_2]=x_2,  [x_4, x_3]=x_3\\
M^6_{A, B} &  [x_4,x_1]=x_2, [x_4,x_2]=z_3, [x_4,x_3]=Ax_1 + Bx_2+x_3\\
M^7_{A, B} &  [x_4, x_1]=x_2, [x_4,x_2]=x_3, [x_4,x_3]=Ax_1 + Bx_2\\
M^8  &  [x_1, x_2]=x_2, [x_3,x_4]=x_4\\
M^{12} & [x_4,x_1]=x_1, [x_4,x_2]=2x_2,  [x_4,x_3]=x_3, [x_3, x_1]=x_2\\
M^{13}_{A} & [x_4,x_1]=x_1+Ax_3, [x_4,x_2]=x_2, [x_4,x_3]=x_1, [x_3,x_1]=x_2\\
M^{14}_{A} &  [x_4, x_1]=Ax_3,[x_4,x_3]=x_1,[x_3,x_1]=x_2, A\neq 0
\end{array}
\end{equation}
\end{small}
Note that $M^6_{A, B} \cong M^6_{C, D}$ if and only if $A=C$ and $B=D$;  $M^7_{A,B} \cong M^7_{C,D}$ if and only if there is an 
$\alpha \in \mathbb{F}^*$ with $A=\alpha^3 C$ and $B=\alpha^2 D$; 
$M^{13}_A \cong M^{13}_B$ if and only if $A=B$;  and $M^{14}_A \cong  M^{14}_B$ if and only if there is an 
$\alpha \in \mathbb{F}^*$ with $A=\alpha^2 B$.

An alternate classification of solvable Lie algebras is presented  by  \v{S}nobl and Winternitz in \cite{levi}, which is up to and including dimension  $6$ and includes only indecomposable Lie algebras.  This classification is an amalgam of results from various sources (e.g., \cite{bianchi, kruch, lie, moro, patera, shaban, turko, turko2}).

We present the classification from \cite{levi} in its entirety up to and including dimension  $3$. We  give a 
partial description of the classification in dimensions $4$, $5$ and $6$, including just those algebras which appear in this article:
\begin{small}
\begin{equation}\label{onesnobl}
\begin{array}{llll}
\mathfrak{n}_{1,1} & \text{The abelian Lie algebra of dimension $1$}\\
\end{array}
\end{equation}

\begin{equation}
\arraycolsep=2pt\def\arraystretch{1.5}
\begin{array}{llll}
 \mathfrak{s}_{2,1} &      [e_2, e_1]=e_1 
\end{array}
\end{equation}

\begin{equation}\label{threesnobl}
\arraycolsep=2pt\def\arraystretch{1.5}
\begin{array}{llll}
 \mathfrak{n}_{3,1} &   [e_2, e_3]=e_1 \\
 \mathfrak{s}_{3,1} &      [e_3, e_1]=e_1, [e_3, e_2]=A e_2   \\
 & 0<|A|\leq 1, ~\text{if}~|A|=1 ~\text{then}~\arg(A)\leq \pi \\
  \mathfrak{s}_{3,2} &   [e_3, e_1]=e_1, [e_3, e_2]= e_1+e_2
\end{array}
\end{equation}

\begin{equation}\label{foursnobl}
\arraycolsep=2pt\def\arraystretch{1.5}
\begin{array}{llll}
 \mathfrak{n}_{4,1} &  [e_2, e_4]=e_1, [e_3, e_4]=e_2\\
 \mathfrak{s}_{4,2} &  [e_4, e_1]=e_1, [e_4, e_2]=e_1+e_2, [e_4, e_3]=e_2+e_3 \\
   \mathfrak{s}_{4,3} & [e_4, e_1]= e_1, [e_4, e_2]=Ae_2, [e_4, e_3]=Be_3  \\
   & 0 <|B| \leq |A| \leq 1, (A,B) \neq (-1, -1) \\
  \mathfrak{s}_{4,6} &  [e_2, e_3]=e_1, [e_4, e_2]=e_2, [e_4, e_3]=-e_3 \\
    \mathfrak{s}_{4,8} &  [e_2, e_3]=e_1, [e_4, e_1]=(1+A)e_1, [e_4, e_2]=e_2, [e_4,e_3]=Ae_3 \\
    & 0<|A|\leq 1, ~\text{if}~|A|=1 ~\text{then}~\arg(A)< \pi\\
      \mathfrak{s}_{4,10} &  [e_2, e_3]=e_1, [e_4, e_1]=2e_1, [e_4, e_2]=e_2, [e_4, e_3]=e_2+e_3 \\
        \mathfrak{s}_{4,11} &  [e_2, e_3]=e_1, [e_4, e_1]=e_1, [e_4, e_2]=e_2 \\
          \mathfrak{s}_{4,12} &  [e_3, e_1]=e_1, [e_3, e_2] = e_2, [e_4, e_1]=-e_2, [e_4, e_2]=e_1
\end{array}
\end{equation}

\begin{equation}
\arraycolsep=2pt\def\arraystretch{1.5}
\begin{array}{llll}
 \mathfrak{s}_{5,33} &  [e_2, e_4]=e_1, [e_3, e_4]=e_2, [e_5, e_2]=-e_2,\\
 &  [e_5, e_3]=-2e_3, [e_5, e_4]=e_4\\
 \mathfrak{s}_{5,35} &  [e_2, e_4]=e_1, [e_3, e_4]=e_2, [e_5, e_1]=(A+2)e_1,\\
 &  [e_5, e_2]=(A +1)e_2, [e_5, e_3]=A e_3, [e_5, e_4]= e_4,\\
 & A \neq 0, -2\\
  \mathfrak{s}_{5,36} &   [e_2, e_4]=e_1, [e_3, e_4]=e_2, [e_5, e_1]=2e_1,\\
 &  [e_5, e_2]=e_2, [e_5, e_4]=e_4\\
 \mathfrak{s}_{5,37} &  [e_2, e_4]=e_1, [e_3, e_4]=e_2, [e_5, e_1]=e_1,\\
 &  [e_5, e_2]=e_2, [e_5, e_3]=e_3\\
  \mathfrak{s}_{5,41} &  [e_4, e_1]=e_1, [e_4, e_3]=A e_3,\\
  &  [e_5, e_2]=e_2, [e_5, e_3]=B e_3,\\& 0< |B | \leq |A | \leq 1\\
   \mathfrak{s}_{5,44} & [e_2,e_3]=e_1, [e_4,e_1]=e_1, [e_4,e_2]=e_2,\\
   & [e_5,e_2]=e_2, [e_5,e_3]=-e_3
\end{array}
\end{equation}

\begin{equation}
\arraycolsep=2pt\def\arraystretch{1.5}
\begin{array}{llll}
 \mathfrak{s}_{6,242} &  [e_2, e_4]=e_1, [e_3, e_4]=e_2, [e_5, e_1]=2e_1, \\
  & [e_5, e_2]=e_2, [e_5, e_4]=e_4, [e_6, e_1]=e_1,\\
  & [e_6, e_2]=e_2, [e_6, e_3]=e_3
\end{array}
\end{equation}
\end{small}

In the classification from \cite{levi}, an algebra designated with $\mathfrak{n}$ is nilpotent, and one with $\mathfrak{s}$
is solvable, but not nilpotent. The first subscript indicates the dimension, and the second index is for enumeration. So, $\mathfrak{s}_{6, 242}$  is the $242^{\text{nd}}$ six-dimensional, solvable, non-nilpotent Lie algebra in the classification.

\section{Preliminaries}\label{sl}

The symplectic algebra $\soo$ is the Lie algebra of $4\times 4$ complex matrices $X$ satisfying $J X^{t}J=X$, where $J$ is the $4\times 4$ matrix
\begin{equation}\label{eq:matrixJ}
  J = \begin{pmatrix}
    0&0&1&0\\ 
    0&0&0&1\\
    -1&0&0&0\\
    0&-1&0&0\\
    \end{pmatrix}.
\end{equation}
The corresponding Lie group is the symplectic group $Sp(4, \Cb)$ given by $\{ g \in GL(4,\Cb) \mid gJg^t=J \}$. 

Let $\algt$ be the diagonal Cartan subalgebra,  and $T$ the corresponding Cartan subgroup. 
For $a,b \in \Cb$,  define 
 \begin{equation} \label{Eq:DiagonalElement}
T_{a,b} = {\rm diag}(a,b,-a,-b).
\end{equation}
If we choose the positive root vectors to be
\begin{eqnarray}\label{eq:rootvectors}
X_{\alpha} = \begin{pmatrix}
0&0&0&0\\
0&0&0&1\\
0&0&0&0\\
0&0&0&0\\
\end{pmatrix}, 
&X_{\beta} = \begin{pmatrix}
0&1&0&0\\
0&0&0&0\\
0&0&0&0\\
0&0&-1&0\\
\end{pmatrix}, \\
X_{\alpha+\beta} = \begin{pmatrix} \label{eq:rootvectors2}
0&0&0&1\\
0&0&1&0\\
0&0&0&0\\
0&0&0&0\\
\end{pmatrix}, 
&X_{\alpha + 2\beta} = \begin{pmatrix}
0&0&1&0\\
0&0&0&0\\
0&0&0&0\\
0&0&0&0\\
\end{pmatrix},
\end{eqnarray}
then $X_{\alpha}$ and $X_{\alpha +  2 \beta}$ correspond to the long roots, while 
$X_{ \beta}$ and $X_{\alpha +   \beta}$ correspond to the short roots.  
The corresponding Borel subalgebra is
\begin{equation}
\borel =    \begin{pmatrix} 
      * & * & * & * \\
      0 & * & * & * \\
      0 & 0 & * & 0 \\
            0 & 0 & * & * \\
   \end{pmatrix} 
   \cap \alg,
\end{equation}
and we let  $B  \subset Sp(4, \Cb)$ be the corresponding subgroup.  
Let $\frak{n}$ be its nilpotent radical  
\begin{equation}
\frak{n} =    \begin{pmatrix} 
      0 & * & * & * \\
      0 & 0 & * & * \\
      0 & 0 & 0 & 0 \\
            0 & 0 & * & 0 \\
   \end{pmatrix} 
   \cap \alg 
  = \left\{
   \begin{pmatrix} 
      0 & t & a & b \\
      0 & 0 & b & d \\
      0 & 0 & 0 & 0 \\
            0 & 0 & -t & 0 \\
   \end{pmatrix} 
  \right\},
\end{equation}
and let  $N  \subset Sp(4, \Cb)$ be the corresponding subgroup.  
Also,  let 
$\p$ be the maximal parabolic subalgebra                  
\begin{equation}
  \p =  \begin{pmatrix} 
      * & * & * & * \\
      *  & * & * & * \\
      0 & 0 & * & * \\
            0 & 0 & * & * \\
   \end{pmatrix} 
   \cap \alg,
\end{equation}
and let $P \subset Sp(4, \Cb)$  be the corresponding subgroup. 

Let $\frak{n}_{\p}$ be the nilpotent radical of $\p$: 
\begin{equation}
\frak{n}_{\p} =    \begin{pmatrix} 
      0 & 0 & * & * \\
      0 & 0 & * & * \\
      0 & 0 & 0 & 0 \\
            0 & 0 & 0 & 0 \\
   \end{pmatrix} 
   \cap \alg 
   = \left\{
   \begin{pmatrix} 
      0 & 0 & a & b \\
      0 & 0 & b & d \\
      0 & 0 & 0 & 0 \\
            0 & 0 & 0 & 0 \\
   \end{pmatrix} \right\},
\end{equation}
and let  $N_{P}  \subset Sp(4, \Cb)$ be the corresponding subgroup.  

The following lemmas will be used below.
\begin{lemma}\label{lemmapairs}
The nonzero eigenvalues of an element of $\alg$ occur in negative pairs   
(that is, its eigenvalues are of the form $a, b, -a,  -b$).  
The nonzero generalized eigenvalues of an element of $\alg$ occur in negative pairs.  
\end{lemma}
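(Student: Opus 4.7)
The plan is to show that every $X \in \alg$ is similar to $-X$; this forces the whole spectral data (eigenvalues with algebraic multiplicities and Jordan block sizes) to be symmetric under $\lambda \mapsto -\lambda$, which yields both assertions simultaneously. I would begin with the easy identity $J^2 = -I$, read off from the block form of $J$ in \eqref{eq:matrixJ}; in particular $J^{-1} = -J$.

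Starting from the defining relation $JX^tJ = X$, I would conjugate to isolate $X^t$. Right-multiplying by $J^{-1}$ gives $JX^t = -XJ$, and then left-multiplying by $J^{-1}$ gives $X^t = JXJ$. Invoking $J^2 = -I$ once more, this rearranges to $J^{-1}X^tJ = -X$, so $-X$ is similar to $X^t$ via conjugation by $J$. Combining with the general fact that a matrix and its transpose have the same Jordan normal form, one obtains $X \sim X^t \sim -X$, and hence $X$ and $-X$ share the same characteristic polynomial and the same list of Jordan blocks at each eigenvalue.

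For a $4 \times 4$ matrix, the resulting symmetry of the eigenvalue multiset under negation forces the nonzero eigenvalues to pair off as $\{a,-a\}$ and $\{b,-b\}$ (with possible coincidences among $a,b$). The matching Jordan data yields the analogous pairing for the generalized eigenvalues, counted with their algebraic multiplicities. The only point requiring care is bookkeeping the signs coming from $J^{-1} = -J$ and the order of the multiplications; beyond that I do not anticipate any genuine obstacle.
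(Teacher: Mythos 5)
Your argument is correct, and it takes a genuinely different route from the paper's. The paper first conjugates an arbitrary element into the Borel subalgebra $\borel$ and then observes that the two diagonal $2\times 2$ blocks of such an element are negative transposes of one another, so their eigenvalues and generalized eigenvalues are negatives of each other. You instead work directly with the defining relation $JX^{t}J=X$ and the identity $J^{2}=-I$ to obtain $J^{-1}X^{t}J=-X$, hence $X\sim X^{t}\sim -X$; your sign bookkeeping checks out (from $X^{t}=JXJ$ one gets $J^{-1}X^{t}J=XJ^{2}=-X$). Your approach buys two things: it avoids the (true but not entirely free) preliminary claim that every element of $\alg$ is conjugate to an element of $\borel$, and it delivers slightly more, namely that the full Jordan data is symmetric under $\lambda\mapsto-\lambda$, not just the multiset of eigenvalues with algebraic multiplicities. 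The paper's route is shorter given that conjugacy into $\borel$ is already being used throughout the article. One small point worth making explicit in your final step: since a nonzero eigenvalue $\lambda$ and $-\lambda$ are distinct and occur with equal multiplicity, the total multiplicity of the nonzero spectrum is even, which is what forces the list of all four eigenvalues into the form $a,b,-a,-b$ (allowing $a$ or $b$ to be $0$).
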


\begin{proof}
Any element is conjugate to an element of $\borel$.  The two diagonal blocks of 
such an element are negative transposes,  so have the negatives of each other's 
eigenvalues and generalized eigenvalues.
\end{proof}

\begin{lemma}  \label{lemma:ssinborel}
Suppose $X \in \borel$ is semisimple.  Then there is $b \in B$ such that $bXb^{-1} \in \algt$, 
i.e.,  any semisimple element of $\borel$ is conjugate to an element of $\algt$ by an element 
of $B$. 

Moreover,  if $X = T+N$, with $T \in \algt$,  $N \in \algn$,  then 
$bXb^{-1} = T$.
\end{lemma}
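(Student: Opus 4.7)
The plan is to use the vector space decomposition $\borel=\algt\oplus\algn$ to write $X=T+N$ uniquely, and to conjugate by elements of the form $\exp(Y)$ with $Y\in\algn$ (which lie in $B$) one positive root at a time, eliminating the components of $N$ on which $T$ acts nontrivially.

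Writing $N=\sum_{\gamma\in\Phi^+}n_\gamma X_\gamma$, I partition $\Phi^+=\Phi_0\sqcup\Phi_1$ according to whether $\gamma(T)=0$ or $\gamma(T)\neq 0$, and enumerate $\Phi_1=\{\delta_1,\ldots,\delta_k\}$ in non-decreasing height order. I then show by induction on $i$ that there is $b_i\in B$ with $b_iXb_i^{-1}=T+\sum_\gamma m^{(i)}_\gamma X_\gamma$ supported on $\Phi_0\cup\{\delta_j:j>i\}$. For the inductive step, conjugating by $\exp(cX_{\delta_{i+1}})$ contributes $-c\,\delta_{i+1}(T)\,X_{\delta_{i+1}}$ to the $\mathfrak{g}_{\delta_{i+1}}$ direction via $[X_{\delta_{i+1}},T]=-\delta_{i+1}(T)X_{\delta_{i+1}}$; since $\delta_{i+1}(T)\neq 0$, setting $c=m^{(i)}_{\delta_{i+1}}/\delta_{i+1}(T)$ cancels the $\delta_{i+1}$-component. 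All other contributions from $\exp(\ad(cX_{\delta_{i+1}}))$ lie in root spaces $\mathfrak{g}_{m\delta_{i+1}+\gamma}$ with $m\geq 1$ and $\gamma$ either zero or in the current support, and these have height strictly greater than $\mathrm{ht}(\delta_{i+1})\geq\mathrm{ht}(\delta_j)$ for all $j\leq i$; hence the previously killed $\delta_j$-components remain zero and the Cartan part is untouched.

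After the induction terminates, the conjugate has the form $T+N'$ with $N'\in\bigoplus_{\gamma\in\Phi_0}\mathfrak{g}_\gamma$. Then $[T,N']=0$ and $N'$ is nilpotent (as a member of $\algn$), so $T$ and $N'$ are precisely the semisimple and nilpotent parts of the Jordan decomposition of $T+N'$. Since $X$ is semisimple by hypothesis, so is its conjugate $T+N'$, which forces $N'=0$ and yields $bXb^{-1}=T$ with $b=b_k\in B$. I expect the main obstacle to be the height bookkeeping in the inductive step: one must verify carefully that the first-order cancellation at step $i+1$ produces no side-effect in any $\mathfrak{g}_{\delta_j}$ for $j\leq i$ or in $\algt$. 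This follows cleanly from the fact that $\ad(X_{\delta_{i+1}})$ strictly raises root heights, so once the positive roots are linearly ordered by height the rest of the argument runs mechanically.
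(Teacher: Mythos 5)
Your proof is correct, but it takes a genuinely different route from the paper's. The paper disposes of the first assertion in one line: a semisimple element of $\borel$ lies in a Cartan subalgebra of the solvable algebra $\borel$, and all Cartan subalgebras of $\borel$ are conjugate under $B$ (Humphreys, Theorem 16.2). The second assertion then follows from the observation that conjugation by $b\in B$ preserves $\algn$ and sends $T$ to $T+N'$ with $N'\in\algn$, so that \emph{every} $b\in B$ carrying $X$ into $\algt$ must send it to $T$. Your argument is instead a self-contained construction: a height-ordered sequence of unipotent conjugations $\exp(cX_{\delta})$, each killing one root component on which $\operatorname{ad}(T)$ acts invertibly, followed by an appeal to uniqueness of the Jordan decomposition to dispose of the residual part supported on roots annihilating $T$. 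The height bookkeeping you flag as the main obstacle does go through: the error terms land in $\mathfrak{g}_{m\delta_{i+1}+\gamma}$ with $m\geq 1$ and $\gamma$ a positive root or zero (the $m\geq 2$, $\gamma=0$ terms vanish outright since $[X_{\delta},[X_{\delta},T]]=0$), hence at heights strictly above everything already processed, and no root $m\delta_{i+1}+\gamma$ with $m\ge1$, $\gamma\in\Phi^{+}$ can equal $\delta_{i+1}$ itself, so the first-order cancellation is exact. What your approach buys is an explicit $b$ in the unipotent radical and independence from the Cartan-conjugacy theorem; what it gives up is the slightly stronger form of the second assertion that the paper gets for free, namely that $bXb^{-1}=T$ for \emph{any} $b\in B$ with $bXb^{-1}\in\algt$, not just the constructed one. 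Since the paper only ever uses the existential form later (Lemmas \ref{lemmaCartan} and \ref{lemmaFullss}), this costs nothing.
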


\begin{proof} 
Since $X\in \mathfrak{b}$ is semisimple, it is an element of a Cartan subalgebra of $\borel$. 
Since all Cartan 
subalgebras of $\mathfrak{b}$ are conjugate under $B$ [\cite{humphreys}, Theorem 16.2], the first assertion follows. 

For the second assertion,  let $b\in B$ such that $bXb^{-1}\in \mathfrak{t}$.  Then
$bXb^{-1} = b(T+N)b^{-1}$.  Clearly $bNb^{-1} \in \algn$,  and $bTb^{-1} 
= T + N'$,  with $N' \in \algn$.  We must have $N' = - bNb^{-1}$ and $bXb^{-1}=T$.
\end{proof}  

We summarize the classification of the conjugacy classes of $\soo$ in the following two tables, 
which distinguish conjugacy by $B$,  $P$,  and $G = \SptwoC$.  

\begin{table}[H]
\renewcommand{\arraystretch}{1.1}
   \centering
\begin{tabular}{|c|c|c|c|c|}
  \hline
Representative    &   Conditions &   $\qquad P\qquad$ &   $\qquad Sp(4, \mathbb{C})\qquad$ &   Eigenvalues   \\
  \hline
$T_{a,b}$ & $a \ne 0$,  & $(a,b) \sim (b,a)$ &$ (a,b)\qquad\quad\quad $ & $4$ \\
& $b \ne 0$ &  &$\quad \sim (-a,-b)$ & \\
&$a \ne \pm b$&&$\quad \sim (a,-b)$ & \\
  \hline
$T_{a,0}$ & $a \ne 0$ & $(a,0) \sim (0,a)$ & $(a,0) \qquad\quad\quad$& $3$ \\
 &  &  & $\quad \sim (-a,0)$& \ \\
$T_{0,a}$ & $a \ne 0$ & $(0,a) \sim (a,0)$ & $(0,a) \qquad\quad\quad$& $3$ \\
  &  &  & $\quad \sim (0,-a)$& \ \\
  \hline
$T_{a,-a}$ & $a \ne 0$ & $(a,-a) \qquad \, \, \, \,$ & $(a,-a)\qquad\quad\quad $ & $2$ \\
 &  & $\quad  \sim (-a,a)$ & $\quad \sim (a,a)$ &  \\
$T_{a, a}$ & $a \ne 0$ &&$(a,a) $ \qquad\qquad\quad & $2$ \\
  &  &&$\quad \sim (a,-a)$  &  \\
    &  &&$\quad \sim (-a,-a)$  &  \\
  \hline
$T_{0,0}$ &  &  & & $1$ \\
  \hline
   \end{tabular}
   \bigskip
   \caption{Semisimple classes.  
   Representatives
   with distinct values of $a,b$ give classes which 
 are inequivalent under the Adjoint action of $B$.  
 The third  column indicates 
 equivalences under the action of $P$ while 
 the fourth indicates additional 
 equivalences under the action of $Sp(4, \mathbb{C})$.  
  The last column gives the number of distinct eigenvalues.
  The pairs of $B$-classes with 
  the same
  $3$ distinct eigenvalues are equivalent under $P$.  
  The pairs 
  of 
  $B$-classes with  
  the same
  $2$ distinct eigenvalues are equivalent under $Sp(4, \mathbb{C})$.  
  Note that the regular elements are exactly those with $4$ distinct eigenvalues. 
}\label{table:ssclasses}
\end{table}
\medskip

\begin{table}[H]
\renewcommand{\arraystretch}{1.2}
   \centering
 \begin{tabular}{|c|c|c|c|c|} 
 \hline
  Representative    & Conditions & $\qquad P \qquad$ & $\qquad Sp(4, \mathbb{C})$ & JCF \\
  \hline
$T_{a,0}+X_{\alpha}$ & $a \ne 0$ & 
$\begin{array}{c}
{\rm conjugate} \\
{\rm to \,\, below} \\
   \end{array}$
   &
   $\begin{array}{l}
       (a,0,-a,0) \\
        \sim (-a,0,a,0) \\
\end{array}
$
& 
$\begin{array}{l}
       (2,1,1) \\
        0, a, -a \\
\end{array}
$
\\
&&&&\\
$T_{0,a}+X_{\alpha+2\beta}$ & $a \ne 0$ &
$\begin{array}{c}
{\rm conjugate} \\
{\rm to \,\, above} \\
   \end{array}$ &
      $\begin{array}{l}
      (0,a,0,-a) \\
        \sim (0,-a,0,a) \\
   \end{array}$
& $\begin{array}{l}
       (2,1,1) \\
        0, a, -a \\
\end{array}
$
 \\
\hline
$T_{a,a}+X_{\beta}$ & $a \ne 0$ & &
$
\begin{array}{l}
 (a,a) \\ \sim (-a,-a) \\
 {\rm and \,\, to \,\, below} 
\end{array}$
& $\begin{array}{l}
       (2,2) \\
        a, -a \\
\end{array}$
 \\
 &&&&\\
 $T_{a,-a}+X_{\alpha+\beta}$ & $a \ne 0$ & &
$
\begin{array}{l}
 (a,-a) \\ \sim (-a,a) \\
 {\rm and \,\, to \,\, above} 
\end{array}$
& $\begin{array}{l}
       (2,2) \\
        a, -a \\
\end{array}
$ \\
\hline
$X_{\beta}$ &  &  
  
$\begin{array}{c}
\\
{\rm all} \\
   \end{array}$ 
    & & $\begin{array}{l}
       (2,2) \\
        \,\,\, 0,0 \\
\end{array}
$ \\ 
$X_{\alpha +\beta}$ 
&  &  
$\begin{array}{c}
{\rm three} \\
   \end{array}$
&   & $\begin{array}{l}
       (2,2) \\
        \,\,\, 0,0 \\
\end{array}
$ \\
$X_{\alpha}+X_{\alpha+2\beta}$ &  
&
$\begin{array}{c}
{\rm conjugate} \\
\\
   \end{array}$
   &
& $\begin{array}{l}
       (2,2) \\
      \,\,\,  0,0 \\
\end{array}
$ \\
\hline
$X_{\alpha+2\beta}$ &  &  
$\begin{array}{c}
{\rm conjugate} \\
{\rm to \,\, below} \\
   \end{array}$
&& $\begin{array}{l}
       (2,1,1) \\
      \,\,  0,0,0 \\
\end{array}
$ \\
&&&&\\
$X_{\alpha}$ &  &  
$\begin{array}{c}
{\rm conjugate} \\
{\rm to \,\, above} \\
   \end{array}$
&& $\begin{array}{l}
       (2,1,1) \\
      \,\,  0,0,0 \\
\end{array}
$ \\
\hline
$X_{\alpha}+X_{\beta}$
&&&& $\begin{array}{l}
       (4) \\
      \,\,  0 \\
\end{array}
$ \\
\hline
   \end{tabular}
   \bigskip
   \caption{The Nonsemisimple Classes.  Distinct values of the parameter $a$ give 
   representatives for the nonsemisimple classes 
up to equivalence under conjugation by $B$. The third column 
  lists equivalences under conjugation by $P$,  and the fourth 
  lists additional equivalences under conjugation by $Sp(4, \mathbb{C})$.
   The final column gives the block sizes of the Jordan normal form 
and the corresponding eigenvalues.  
} \label{tab:nonss}
\end{table}

\section{One-dimensional subalgebras of $\soo$}

Any solvable subalgebra of $\soo$ is contained in a Borel subalgebra and hence is 
conjugate to a subalgebra of $\borel$.  Accordingly,  we shall focus on 
solvable subalgebras of $\borel$.

In this section, we classify the one-dimensional (solvable) subalgebras of $\soo$ 
by separating them into
 three cases:  subalgebras with semisimple generators (Theorem \ref{semiclass}), subalgebras with nilpotent generators (Theorem \ref{nilclass}),  and subalgebras with generators that have  non-trivial Jordan decompositions (Theorem \ref{ntjd}).  The results are summarized in Table \ref{onedimuu}.  



\begin{theorem}\label{semiclass}
Every semisimple element of $\soo$ is conjugate to an element $T_{a,b}$ in $\mathfrak{t}$ (c.f., 
Eq. (\ref{Eq:DiagonalElement})).

 A complete list of   one-dimensional subalgebras of $\soo$ with semisimple generators, up to conjugacy in
$Sp(4, \mathbb{C})$, is
\begin{equation} 
\begin{array}{llllllll}
\langle T_{1,b} \rangle \cong \langle T_{1,b^{-1}} \rangle,  b \neq 0,  \pm 1; &\langle T_{1,0} \rangle, ;&
\langle T_{1, 1}\rangle. 
\end{array}
\end{equation} 
If $a,b \ne 0$,  $b \ne \pm a$, then 
the subalgebras $\langle T_{a,b} \rangle$, $\langle T_{a,0} \rangle$ and
$\langle T_{a, a}\rangle$ are pairwise inequivalent; $\langle T_{a,0} \rangle =\langle T_{1,0} \rangle$;
$\langle T_{a,a} \rangle =\langle T_{1,1} \rangle$,   which is conjugate to $\langle T_{1,-1} \rangle$; 
and $\langle T_{a,b} \rangle$ is conjugate to $\langle T_{a,'b'} \rangle$ if and  only if 
$\{a,b\}=\{ \lambda a', \pm \lambda b'\}$,
for some $\lambda \in \mathbb{C}^{\times}$.
\end{theorem}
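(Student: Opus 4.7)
The plan is to handle the three assertions in order, leveraging the lemmas of Section~\ref{sl} and Table~\ref{table:ssclasses}. First I would show that every semisimple $X\in\soo$ is conjugate to some $T_{a,b}\in\algt$: since every element of $\soo$ is conjugate into $\borel$, I can assume $X\in\borel$ and then apply Lemma~\ref{lemma:ssinborel} to place $X$ into $\algt$ by a conjugation in $B$. This sends $X$ to $T_{a,b}$ for some $a,b\in\Cb$.

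Next I would turn to the one-dimensional subalgebras. The starting observation is that $\langle T_{a,b}\rangle=\langle T_{a',b'}\rangle$ precisely when $(a',b')=\lambda(a,b)$ for some $\lambda\in\Cb^{\times}$, so up to equality the classifying object is the projective point $[a:b]$. On top of this scaling symmetry, Table~\ref{table:ssclasses} tells me that the $\SptwoC$-conjugacy of $T_{a,b}$ is generated by $(a,b)\sim(b,a)\sim(-a,-b)\sim(a,-b)$ in the generic stratum, together with the specific degenerations recorded there for $(a,0)$, $(a,\pm a)$, and $(0,0)$. Composing scaling with these moves produces the full equivalence on generators, and unpacking ``$\lambda\cdot(\text{sign change or swap})$'' yields the criterion $\{a,b\}=\{\lambda a',\pm\lambda b'\}$ asserted at the end of the statement.

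To extract the three normal forms, I would normalize a generic representative to $[1:b]$ and note that the residual equivalence on $b$ is generated by $b\mapsto -b$ (a sign move) and $b\mapsto b^{-1}$ (the swap $(a,b)\leftrightarrow(b,a)$ followed by rescaling by $1/b$), producing the orbit $\{\pm b,\pm b^{-1}\}$; this accounts for the identification $\langle T_{1,b}\rangle\cong\langle T_{1,b^{-1}}\rangle$ for $b\neq 0,\pm 1$. The degenerations give $[1:0]$ collapsing to $\langle T_{1,0}\rangle$ and $[1:\pm 1]$ collapsing to $\langle T_{1,1}\rangle$, conjugate to $\langle T_{1,-1}\rangle$ via the $\SptwoC$-equivalence from Table~\ref{table:ssclasses}. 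Pairwise inequivalence of the three strata then follows from the conjugation invariant ``number of distinct eigenvalues'' ($4$, $3$, and $2$ respectively, per the final column of Table~\ref{table:ssclasses}). The main obstacle I anticipate is the bookkeeping of which degenerate classes merge under $\SptwoC$ rather than merely under $B$ or $P$, since those boundaries are what force the precise three-representative list rather than a longer one; careful cross-reference to the relevant rows of the table should resolve this without further argument.
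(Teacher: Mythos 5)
Your proposal is correct and follows essentially the same route as the paper: reduce a semisimple element to $\algt$, then determine conjugacy of generators via the signed-permutation action $(a,b)\mapsto(b,a)$, $(a,b)\mapsto(a,-b)$ combined with rescaling, and separate the three strata by the number of distinct eigenvalues. The only cosmetic difference is that the paper invokes the Weyl group of $\soo$ directly (via [Collingwood--McGovern, Cor.\ 2.2.2 and Thm.\ 2.2.4]) rather than reading the same equivalences off Table~\ref{table:ssclasses}, and reduces to $\algt$ by that citation rather than through $\borel$ and Lemma~\ref{lemma:ssinborel}.
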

\begin{proof}
Every semisimple element $T$ of $\soo$ is conjugate to 
an element in $\mathfrak{t}$ [\cite{collingwood}, Corollary 2.2.2], so we may assume $T\in \mathfrak{t}$. 

Two elements in $\mathfrak{t}$ are $Sp(4,\mathbb{C})$-conjugate if and only if they are $W$-conjugate [\cite{collingwood}, Theorem 2.2.4] , where $W$ is the Weyl group corresponding to $\mathfrak{t}$.  The Weyl group $W$ of $\soo$  has generator $s_\alpha$ and $s_\beta$ such that  
$s_\alpha(T_{a,b})=T_{a,-b}$ and $s_\beta(T_{a,b})=T_{b,a}$. The result follows. 
\end{proof} 



\begin{theorem}\label{nilclass}
 A complete list of  inequivalent, one-dimensional subalgebras of $\soo$ with nilpotent generators, up to conjugacy in
$Sp(4, \mathbb{C})$, is
\begin{equation}
\begin{array}{llllllll}
\langle X_{\beta} \rangle; &\langle X_{\alpha} \rangle;&
\langle X_{\alpha}+X_{\beta} \rangle.
\end{array}
\end{equation}
\end{theorem}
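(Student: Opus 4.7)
The plan is to reduce the classification of one-dimensional subalgebras with nilpotent generators to the classification of nonzero nilpotent elements of $\alg$ up to $Sp(4,\mathbb{C})$-conjugacy, and then to read off the answer directly from Table \ref{tab:nonss}. The passage from elements to lines requires a small argument since, a priori, $\langle X\rangle = \langle cX\rangle$ for every $c\in\mathbb{C}^\times$, but conjugation need not preserve scalar multiples.

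First I would show that for any nonzero nilpotent $X\in\alg$ and every $c\in\mathbb{C}^\times$, the elements $X$ and $cX$ lie in the same $Sp(4,\mathbb{C})$-orbit. Using the Jacobson--Morozov theorem to embed $X$ in an $\mathfrak{sl}(2,\mathbb{C})$-triple $\{H,X,Y\}\subset\alg$, the relation $[H,X]=2X$ gives
\begin{equation}
\exp(tH)\, X\, \exp(-tH) = e^{2t}\, X,
\end{equation}
and as $t$ ranges over $\mathbb{C}$, the factor $e^{2t}$ sweeps out all of $\mathbb{C}^\times$. Consequently, $\langle X\rangle$ and $\langle Y\rangle$ are $Sp(4,\mathbb{C})$-conjugate if and only if $X$ and $Y$ are, and the classification of one-dimensional nilpotent subalgebras coincides with the classification of nonzero nilpotent $Sp(4,\mathbb{C})$-orbits on $\alg$.

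Next I would read the latter off Table \ref{tab:nonss}: its rows with all eigenvalues zero supply six nilpotent representatives, namely $X_\beta$, $X_{\alpha+\beta}$, and $X_\alpha+X_{\alpha+2\beta}$ of Jordan type $(2,2)$; $X_{\alpha+2\beta}$ and $X_\alpha$ of Jordan type $(2,1,1)$; and $X_\alpha+X_\beta$ of Jordan type $(4)$. The table already records that within each Jordan-type group the representatives are mutually $Sp(4,\mathbb{C})$-conjugate, and representatives of distinct Jordan types cannot even be $GL(4,\mathbb{C})$-conjugate, let alone $Sp(4,\mathbb{C})$-conjugate. Selecting one representative from each group therefore yields precisely $\langle X_\beta\rangle$, $\langle X_\alpha\rangle$, and $\langle X_\alpha+X_\beta\rangle$, the list claimed in the theorem.

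The main subtlety, and essentially the only step requiring any real work, is the scalar-reduction step via Jacobson--Morozov; the argument depends crucially on $X$ being nilpotent, since for a general $X$ the elements $X$ and $cX$ need not be conjugate (this is already visible in Theorem \ref{semiclass}). With that reduction in place, the classification is entirely a matter of reading Table \ref{tab:nonss}.
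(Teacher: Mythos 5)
Your proof is correct and rests on the same core fact as the paper's: there are exactly three nonzero nilpotent orbits in $\alg$, with representatives $X_\alpha$, $X_\beta$, $X_\alpha+X_\beta$. The paper simply cites [\cite{collingwood}, Theorem 5.1.3] for this and stops, whereas you extract the orbit count from Table \ref{tab:nonss} and, more importantly, supply the scalar-reduction step via Jacobson--Morozov showing $X\sim cX$ for nilpotent $X$ and all $c\in\Cb^{\times}$. That step is genuinely needed to pass from orbits of elements to conjugacy classes of lines (a priori $\langle X\rangle\sim\langle Y\rangle$ only means $gXg^{-1}=cY$ for some scalar $c$), and the paper leaves it implicit; your version makes the argument self-contained at the cost of invoking Jacobson--Morozov and of relying on the completeness of Table \ref{tab:nonss}, which the paper states without proof in the preliminaries.
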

\begin{proof}
There  are precisely three nonzero nilpotent orbits of $\soo$ [\cite{collingwood}, Theorem 5.1.3] with representatives $X_\alpha, X_\beta$, and $X_\alpha +X_\beta$.
\end{proof}


\begin{theorem}\label{ntjd}
 A complete list of  one-dimensional subalgebras of $\soo$ with generators 
 having a nontrivial Jordan decomposition, 
 i.e.,  that are neither semisimple nor nilpotent,  
 up to conjugacy in $Sp(4, \mathbb{C})$, is
\begin{equation}
\begin{array}{llllllll}
\langle T_{1, 0}+X_{\alpha} \rangle; &\langle T_{1,1}+ X_{\beta} \rangle.
\end{array}
\end{equation}

The subalgebras $\langle T_{a, 0}+X_{\alpha} \rangle$ and $\langle T_{0, a}+X_{\alpha + 2 \beta} \rangle$ are 
conjugate to $\langle T_{1, 0}+ X_{\alpha} \rangle$, for any $a\neq 0$. The 
subalgebras $\langle T_{a, a}+X_{\beta} \rangle$ and $\langle T_{a, -a}+X_{\alpha + \beta} \rangle$ are  
conjugate to $\langle T_{1, 1}+ X_{\beta} \rangle$, for any $a\neq 0$.
\end{theorem}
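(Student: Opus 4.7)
The plan is to exploit the Jordan decomposition. Let $X$ generate such a subalgebra and write $X = S + N$ with $S$ semisimple, $N$ nilpotent, $[S,N]=0$, and both nonzero. By Theorem \ref{semiclass} we may conjugate so that $S \in \algt$, say $S = T_{a,b}$. Note that conjugating the \emph{subalgebra} $\langle X\rangle$ by any $g$ that commutes with $S$ replaces $N$ by $\mathrm{Ad}(g)N$, while rescaling $X$ replaces $(S,N)$ by $(\lambda S, \lambda N)$; both operations will be used freely.

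Next I would do a case analysis on the $\algt$-weight structure of $S$, read off from Table \ref{table:ssclasses}. Since $N$ must lie in the centralizer $\mathfrak{z}_{\alg}(S)$, and a nilpotent element of a reductive centralizer of a semisimple lies in its semisimple part, I first rule out the regular case $a,b\neq 0$, $a\neq \pm b$: then $\mathfrak{z}_\alg(S)=\algt$ contains no nonzero nilpotent. The remaining possibilities are exactly those in which some root $\gamma\in\{\alpha,\beta,\alpha+\beta,\alpha+2\beta\}$ vanishes on $S$, giving
\begin{equation*}
S\in\{T_{a,0},\ T_{0,a},\ T_{a,a},\ T_{a,-a}\},\qquad a\neq 0,
\end{equation*}
with corresponding vanishing roots $\alpha,\alpha+2\beta,\beta,\alpha+\beta$. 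In each of these cases the centralizer has the form $\mathbb{C}\cdot S\oplus \mathfrak{sl}(2)_\gamma$, where $\mathfrak{sl}(2)_\gamma=\langle X_\gamma, X_{-\gamma}, [X_\gamma,X_{-\gamma}]\rangle$.

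Now $N$ is a nonzero nilpotent element of $\mathbb{C}\cdot S\oplus\mathfrak{sl}(2)_\gamma$; since these two summands commute, the $\mathbb{C}\cdot S$ component must vanish, so $N\in\mathfrak{sl}(2)_\gamma$ is a nonzero nilpotent of this $\mathfrak{sl}(2)$. The corresponding group $SL(2)_\gamma$ centralizes $S$, and inside it any nonzero nilpotent is conjugate to a scalar multiple of $X_\gamma$. Thus $\langle X\rangle$ is conjugate to $\langle S + r X_\gamma\rangle$ for some $r\neq 0$. Rescaling the generator, and then conjugating by a torus element $\mathrm{diag}(c,d,c^{-1},d^{-1})$ which fixes $S$ and scales $X_\gamma$ by a nonzero factor $d^{2}, c^{-2}$, or $(cd)^{\pm 1}$ (depending on $\gamma$), I can normalize the coefficient of $S$ to $1$ and the coefficient of $X_\gamma$ to $1$. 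This yields the four normalized representatives
\begin{equation*}
\langle T_{1,0}+X_\alpha\rangle,\ \langle T_{0,1}+X_{\alpha+2\beta}\rangle,\ \langle T_{1,1}+X_\beta\rangle,\ \langle T_{1,-1}+X_{\alpha+\beta}\rangle,
\end{equation*}
establishing the parametric statements (c) and (d).

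Finally I would invoke Table \ref{tab:nonss} to collapse these four classes down to two under $Sp(4,\mathbb{C})$: the first two are $P$-conjugate, and the last two are $Sp(4,\mathbb{C})$-conjugate. That the two surviving representatives $\langle T_{1,0}+X_\alpha\rangle$ and $\langle T_{1,1}+X_\beta\rangle$ are inequivalent follows at once from comparing the Jordan normal forms of their generators (block sizes $(2,1,1)$ versus $(2,2)$) in Table \ref{tab:nonss}. The main bookkeeping obstacle will be making sure the scaling that normalizes $S$ and the torus scaling that normalizes $N$ do not interfere, so I would record once and for all, for each vanishing root $\gamma$, which one-parameter subgroup of $T$ centralizes $S$ and acts nontrivially on $X_\gamma$; after that the reductions are immediate.
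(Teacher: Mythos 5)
Your proposal is correct and follows essentially the same route as the paper's proof: Jordan decomposition with a non-regular semisimple part, identification of the commuting nilpotent part with (a multiple of) the unique positive root vector annihilated by $S$, normalization of the coefficient by a diagonal conjugation, and reduction of the four cases to two with the final pair distinguished by Jordan type. The only cosmetic differences are that you phrase the commutation constraint via the centralizer $\mathbb{C}\cdot S\oplus\mathfrak{sl}(2)_{\gamma}$ instead of working directly inside $\borel$, and you cite Table \ref{tab:nonss} for the two cross-conjugacies where the paper exhibits the explicit conjugating matrices $W$ and $A$ of Eqs. \eqref{Eq:matrixW} and \eqref{eq:matrixAinSp}.
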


\begin{proof}
The generator of such a subalgebra of $\borel$ must have Jordan decomposition $X = T + N$,  with $T \in \algt$,  $N \in \algn$,  and $T \ne 0$,  $N \ne 0$.  Moreover, 
$T$ cannot be regular or $X$ would be semisimple.  After multiplying by a scalar,  we can assume $T = T_{1,1}, T_{1,-1}, T_{1,0}$, or $T_{0,1}$.  Since $N$ 
and $T$ must commute,  the possibilities are 
\begin{equation*}
T_{1,1} + c X_{\beta}  \qquad T_{1,-1} + c X_{\alpha + \beta}  \qquad  T_{1,0} + c X_{\alpha } \qquad   T_{0,1} + c X_{\alpha + 2 \beta }   , 
\end{equation*}
for some $c \ne 0$.  After conjugation by a suitable diagonal element,  we can assume $c = 1$, so the possibilities are 
\begin{equation*}
T_{1,1} +  X_{\beta}  \qquad T_{1,-1} +  X_{\alpha + \beta}  \qquad  T_{1,0} + X_{\alpha } \qquad   T_{0,1} +  X_{\alpha + 2 \beta }   . 
\end{equation*}

Let
\begin{equation}\label{Eq:matrixW}
W = \begin{pmatrix}
0&1&0&0\\
-1&0&0&0 \\
0&0&0&1 \\
0&0&-1&0\\
\end{pmatrix} \in Sp(4, \Cb).
\end{equation} Then
\begin{equation}
W  (T_{1,0} + X_{\alpha } )W^{-1} =   T_{0,1} +  X_{\alpha + 2 \beta } .  
\end{equation}

Let 
\begin{equation}\label{eq:matrixAinSp}
A = \begin{pmatrix}
1&0&0&0\\
0&0&0&-1 \\
0&0&1&0 \\
0&1&0&0\\
\end{pmatrix} \in Sp(4, \Cb).
\end{equation} 
Then
\begin{equation}
A (T_{1,1} +  X_{\beta} ) A^{-1} = T_{1,-1} +  X_{\alpha + \beta} .
\end{equation} 

Since $T_{1, 0}+X_{\alpha}$ has rank $3$ and 
$ T_{1,1}+ X_{\beta}$ has rank $4$,  the subalgebras 
$\langle T_{1, 0}+X_{\alpha} \rangle$ and $\langle T_{1,1}+ X_{\beta} \rangle$ 
are not equivalent.  

The result follows.  
\end{proof}

\section{Preliminary results to be used in classification of higher dimensional subalgebras}



\subsection{Two-dimensional subalgebras of $\nf_{\p}$}\label{subsection:2dimsubalgnp}
If $Z$ is a symmetric $2 \times 2$ matrix,  then 
\begin{equation}
\sz = \{ X \in \gtwoC | ~X = X^{t} ~ {\rm and} ~
{\rm tr} (XZ^{t}) = 0 \}
\end{equation}
is a two-dimensional subspace of the symmetric matrices,  and every such subspace
is of this form for some such $Z$.  Note that 
\begin{equation}
\begin{array}{lllll}
&{\rm tr} (XZ^{t})  \\=  &{\rm tr} (gXZ^{t}g^{-1})  =  
 {\rm tr} (gXg^{t}g^{-t}Z^{t}g^{-1})  =
  {\rm tr} \bigl(gXg^{t} (g^{-t}Zg^{-1})^{t} \bigr).
  \end{array}
\end{equation}

Accordingly,  conjugating 
$\begin{pmatrix} 
0 & \sz \\ 0 & 0 \\
\end{pmatrix}$
by 
$ \begin{pmatrix} 
g & 0 \\ 0 & g^{-t} \\
\end{pmatrix} $ 
amounts to acting on $\sz$ by 
$X \mapsto gXg^{t}$, which in turn amounts to acting on $Z$ by 
$Z \mapsto g^{-t}Z g^{-1}$.  

Consequently,  there are two conjugacy classes of two-dimensional 
subspaces of $\nf_{\p}$, corresponding to 
$Z = \begin{pmatrix} 
1 & 0 \\ 0 & 0 \\
\end{pmatrix}$
and 
$Z = \begin{pmatrix} 
0 & 1 \\
1 & 0 \\ 
\end{pmatrix}$,  
respectively.   
Since these matrices $Z$ correspond to inequivalent forms,  we see that these two classes 
are inequivalent.  
We have just proved the following result.

\begin{lemma}\label{lemma:2dimSubsp.np}
There are two conjugacy classes of two-dimensional 
subalgebras
of $\nf_{\p}$,
with representatives 
\begin{equation}
\langle X_{\alpha}, X_{\alpha+ \beta}   \rangle 
\qquad {\rm and } \qquad 
\langle X_{\alpha}, X_{\alpha+2\beta}   \rangle. 
\end{equation} 
\end{lemma}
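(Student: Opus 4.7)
The plan is to exploit that $\nf_\p$ is abelian — a fact visible from its explicit matrix form, since the product of any two matrices whose only nonzero entries lie in the upper-right $2 \times 2$ block is zero — so that every $2$-dimensional subspace is automatically a subalgebra, and the problem reduces to classifying $2$-dimensional subspaces of $\nf_\p$ up to conjugation. First I would identify $\nf_\p$ with the space of symmetric $2 \times 2$ complex matrices via the upper-right $2 \times 2$ block, and observe that conjugation by a Levi element $\mathrm{diag}(g, g^{-t}) \in P$ with $g \in GL(2, \mathbb{C})$ realizes the standard quadratic action $X \mapsto g X g^t$ on symmetric matrices.

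Second, I would parametrize $2$-dimensional subspaces of symmetric matrices as trace-form annihilators of lines: every such subspace has the form $\sz = \{X = X^t : \mathrm{tr}(X Z^t) = 0\}$ for some nonzero symmetric $Z$, determined up to scalar. The cyclic property of trace gives that conjugation induces the contragredient action $Z \mapsto g^{-t} Z g^{-1}$ on the parameter, which is the standard action on symmetric bilinear forms on $\mathbb{C}^2$. Over $\mathbb{C}$ these are classified by rank, yielding exactly two nonzero orbits. A direct computation of the corresponding annihilators in the rank-one case (e.g.\ $Z = \mathrm{diag}(1, 0)$) and rank-two case (e.g.\ $Z$ with off-diagonal entries equal to $1$) produces the two representatives $\langle X_\alpha, X_{\alpha+\beta}\rangle$ and $\langle X_\alpha, X_{\alpha+2\beta}\rangle$ listed in the lemma.

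The main obstacle will be upgrading this $P$-classification to an $Sp(4, \mathbb{C})$-classification, since the $GL(2, \mathbb{C})$-argument only separates the two classes under the Levi subgroup of $P$. I would handle this by exhibiting an $Sp(4, \mathbb{C})$-invariant that distinguishes them. A convenient choice is the set of rank-one elements of the subalgebra, viewing each element as a $4 \times 4$ matrix: in $\langle X_\alpha, X_{\alpha+2\beta}\rangle$ the nonzero rank-one elements fill two distinct lines $\mathbb{C} X_\alpha$ and $\mathbb{C} X_{\alpha+2\beta}$, while in $\langle X_\alpha, X_{\alpha+\beta}\rangle$ a short computation on a generic element $a X_\alpha + b X_{\alpha+\beta}$ shows the rank-one locus is the single line $\mathbb{C} X_\alpha$. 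Since matrix rank is invariant under conjugation, this separates the two classes inside all of $Sp(4, \mathbb{C})$ and completes the classification.
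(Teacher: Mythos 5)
Your proposal is correct and follows essentially the same route as the paper: identify $\nf_{\p}$ with symmetric $2\times 2$ matrices, parametrize two-dimensional subspaces as trace-form annihilators $\sz$, observe that the Levi action $X\mapsto gXg^{t}$ induces $Z\mapsto g^{-t}Zg^{-1}$, and classify by the rank of the form $Z$ over $\Cb$. Your extra step of separating the two classes under all of $Sp(4,\Cb)$ via the rank-one locus (two lines versus one line) is a welcome refinement; the paper only makes that argument explicit later, in the proof of Theorem \ref{2dimNilpp}.
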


\subsection{Semisimple elements in $\borel$}

\begin{lemma}\label{lemmaCartan}
Suppose $\alga$ is a solvable subalgebra of $\borel$.  If $\alga$ contains semisimple 
elements, then it is possible to find $b \in B$ so that the conjugate 
$\alga^{b} = {\rm Ad}(b) \alga$ intersects $\algt$.  Moreover,  if $\alga$ contains a Cartan 
subalgebra,  a two-dimensional algebra of commuting semisimple elements,  then
it is possible to find $b \in B$ so that 
the conjugate $\alga^{b} $ contains $\algt$.  
\end{lemma}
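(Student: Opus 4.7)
The plan is to deduce both assertions from Lemma \ref{lemma:ssinborel}, together with the conjugacy of Cartan subalgebras of the solvable Lie algebra $\borel$ under $B$ [\cite{humphreys}, Theorem 16.2].

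For the first assertion, I would simply pick any nonzero semisimple element $X \in \alga$ and invoke Lemma \ref{lemma:ssinborel} to produce $b \in B$ with $bXb^{-1} \in \algt$. Since $bXb^{-1}$ also lies in $\alga^b = {\rm Ad}(b)\alga$, this single element witnesses that $\alga^b \cap \algt \ne 0$.

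The second assertion requires more work. Let $\mathfrak{c} \subseteq \alga$ denote the given Cartan subalgebra. Because $\mathfrak{c}$ is two-dimensional, abelian, and consists of semisimple elements of $\soo$, and because $2$ is the rank of $\soo$, $\mathfrak{c}$ is a maximal toral subalgebra of $\soo$, and in particular it is its own centralizer there. The key step is to upgrade $\mathfrak{c}$ to a Cartan subalgebra of the solvable Lie algebra $\borel$: it is abelian (hence nilpotent), and I would verify that it is self-normalizing in $\borel$ by decomposing $\borel = \mathfrak{c} \oplus \bigoplus_{\lambda \ne 0} \borel_\lambda$ into $\mathrm{ad}(\mathfrak{c})$-weight spaces. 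The zero-weight space is the centralizer of $\mathfrak{c}$ in $\borel$, which by the observation above is $\mathfrak{c}$ itself. If $y \in \borel$ satisfies $[y, \mathfrak{c}] \subseteq \mathfrak{c}$, then for each nonzero weight $\lambda$ the weight-space component $y_\lambda$ must vanish, since otherwise one could find $h \in \mathfrak{c}$ with $\lambda(h) \ne 0$, yielding $[y_\lambda, h] = -\lambda(h) y_\lambda \in \mathfrak{c} \cap \borel_\lambda = 0$. Hence $y \in \mathfrak{c}$.

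Once $\mathfrak{c}$ is established as a Cartan subalgebra of $\borel$, the conjugacy theorem applied to the Cartan subalgebras $\mathfrak{c}$ and $\algt$ yields $b \in B$ with $b\mathfrak{c}b^{-1} = \algt$, and then $\alga^b \supseteq b\mathfrak{c}b^{-1} = \algt$, as required. The main obstacle is the self-normalization check for $\mathfrak{c}$ in $\borel$; the remaining ingredients are either formal manipulations or direct appeals to Lemma \ref{lemma:ssinborel} and the cited conjugacy result.
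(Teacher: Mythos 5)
Your proof is correct and follows essentially the same route as the paper: the first assertion is deduced from Lemma \ref{lemma:ssinborel}, and the second from the conjugacy of Cartan subalgebras of the solvable algebra $\borel$ under $B$ (Humphreys, Theorem 16.2). The only difference is that you explicitly verify that the two-dimensional algebra of commuting semisimple elements is genuinely a Cartan subalgebra of $\borel$ (self-normalizing, via the $\mathrm{ad}(\mathfrak{c})$-weight-space decomposition and the fact that a two-dimensional toral subalgebra of the rank-two algebra $\soo$ is self-centralizing) --- a step the paper leaves implicit before invoking the conjugacy theorem.
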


\begin{proof}
The first assertion follows from Lemma \ref{lemma:ssinborel}.   Since all Cartan 
subalgebras of $\mathfrak{b}$ are conjugate under $B$ [\cite{humphreys}, Theorem 16.2], the second assertion follows.
\end{proof}

\begin{lemma}\label{lemmaFullss}
Suppose $\alga$ is a solvable subalgebra of $\borel$.  
Suppose $\alga$ contains $X = T+N$, $X' = T' + N'$,  with $T,T' \in \algt$, 
$N,N' \in \algn$. Suppose moreover that $T$ and $T'$ are linearly independent.  

Then it is possible to find $b \in B$ so that the conjugate 
$\alga^{b} = {\rm Ad}(b) \alga$ contains $\algt$.  
\end{lemma}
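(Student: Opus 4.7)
The plan is to bring $X$ into Jordan form via Lemma \ref{lemma:ssinborel}, and then use the semisimple action of $\ad(T)$ on $\alga$ to extract $T$ and $T'$ from the algebra. Since $\borel$ is algebraic, the Jordan decomposition $X = X_s + X_n$ lies in $\borel$; because $X_n$ is nilpotent and lies in $\borel = \algt \oplus \algn$, it actually lies in $\algn$, so by uniqueness of the $\borel$-decomposition of $X$ the $\algt$-component of $X_s$ is $T$. Lemma \ref{lemma:ssinborel} then yields $b_1 \in B$ with $\mathrm{Ad}(b_1)(X_s) = T$. Noting that conjugation by $B$ preserves $\algt$-components of elements of $\borel$ (one checks this separately on the toral and unipotent factors of $B$), after replacing $\alga$ by $\alga^{b_1}$ we may assume $X = T + N$ is already the Jordan decomposition of $X$, with $N$ in the centralizer of $T$ in $\algn$, while $X' = T' + N'$ still has linearly independent toral part $T'$.

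Because $\ad(T) = \ad(X)_s$ is a polynomial in $\ad(X)$ with zero constant term, it preserves $\alga$. Decomposing $\alga$ into eigenspaces of $\ad(T)$, the $0$-eigencomponent $X'_0 = T' + N'_0$ of $X'$ (where $N'_0$ is the part of $N'$ in the centralizer of $T$ in $\algn$) lies in $\alga$, and so does $X$ itself. If $T$ is regular, its centralizer in $\algn$ vanishes, forcing $N = N'_0 = 0$; then $X = T$ and $X'_0 = T'$ are in $\alga$, and $\algt = \langle T, T'\rangle \subseteq \alga$ at once.

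The main obstacle is the non-regular case. Inspection of the four positive roots $\alpha, \beta, \alpha+\beta, \alpha+2\beta$ (evaluated on $T_{a,b}$) shows that for any non-zero, non-regular $T \in \algt$ exactly one root $\gamma_0$ vanishes on $T$, so its centralizer in $\algn$ is the line $\mathbb{C} X_{\gamma_0}$. Write $X = T + dX_{\gamma_0}$ and $X'_0 = T' + d' X_{\gamma_0}$. Linear independence of $T,T'$, together with $\ker \gamma_0 \cap \algt = \mathbb{C} T$, forces $\gamma_0(T') \neq 0$, so conjugation by $b_2 = \exp\bigl(\tfrac{d'}{\gamma_0(T')}\, X_{\gamma_0}\bigr) \in B$ fixes $X$ (since $[X_{\gamma_0}, X] = 0$) and sends $X'_0$ to $T'$. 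Then $T' \in \alga^{b_2 b_1}$, and the bracket $[X, T'] = -d\gamma_0(T') X_{\gamma_0}$ also lies there; either $d = 0$ and $T = X$ is already in the algebra, or $d \neq 0$ so that $X_{\gamma_0}$ and hence $T = X - dX_{\gamma_0}$ lie in $\alga^{b_2 b_1}$. In either case $\algt \subseteq \alga^{b_2 b_1}$, as desired.
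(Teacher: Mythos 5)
Your proof is correct, and it takes a genuinely different route from the paper's. The paper forms a suitable linear combination $X'' = T'' + N''$ of $X$ and $X'$ whose toral part $T''$ is regular with \emph{distinct} eigenvalues on $\algn$ (such a combination is automatically semisimple, so Lemma \ref{lemma:ssinborel} conjugates it to $T''\in\algt$), and then recovers $T$ by applying powers of $\ad(T'')$ to $X$ and running a Vandermonde argument on the root components of $N$. You instead normalize $X$ itself: you put its Jordan decomposition in the form $T+N$ with $N$ centralizing $T$, observe that $\ad(T)=\ad(X)_s$ preserves $\alga$, and project $X'$ onto the $0$-eigenspace of $\ad(T)$; the regular case is then immediate, while the non-regular case needs the extra observations that the centralizer of a nonzero non-regular $T$ in $\algn$ is a single root line $\Cb X_{\gamma_0}$ with $\gamma_0(T')\neq 0$, followed by one more unipotent conjugation and a bracket computation. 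What the paper's choice buys is uniformity --- no case split and no analysis of centralizers --- at the cost of having to select a combination with distinct $\ad$-eigenvalues; what your route buys is that you only ever need the $0$-eigenspace of an $\ad$-semisimple element (no distinctness or Vandermonde), and it produces a slightly more explicit normal form for $\alga$ after conjugation, at the cost of the regular/non-regular dichotomy. All the individual steps you use (Jordan components staying in $\borel$, nilpotent elements of $\borel$ lying in $\algn$, preservation of $\algt$-components under $\mathrm{Ad}(B)$, and the computation of $\mathrm{Ad}(\exp(zX_{\gamma_0}))$) check out.
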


\begin{proof}
It is possible to find a linear combination of $X$ and $X'$ that can be written as 
$X'' = T'' + N''$, with $N'' \in \algn$ and $T'' \in \algt$ such that      $\ad(T'')$ has distinct eigenvalues on $\mathfrak{n}$. 
By Lemma \ref{lemma:ssinborel},  we can 
perform
 a conjugation 
 so that $X'' = T'' \in \algt$.  Then if 
$X \in \algt$, we are done.  
Otherwise,  
$X''$ and $X$ are linearly independent.  If 
$X = T+N$,  with $N \ne 0$,   
we can write $N = \sum_{i} N_{i}$, where each $N_{i}$ is an eigenvector of $\ad(T'')$ with 
eigenvalue $\lambda_{i}$,  and the $\lambda_{i}$ are distinct.  
Then $\alga$ contains 
\begin{equation}
\begin{array}{llll}
\ad(T'')(X) &=& \sum_{i} \lambda_{i} N_{i},\\
\ad(T'')^{2}(X) &=& \sum_{i} \lambda_{i}^{2} N_{i},\\
\ad(T'')^{3}(X) &=& \sum_{i} \lambda_{i}^{3} N_{i},\\
\ad(T'')^{4}(X) &=& \sum_{i} \lambda_{i}^{4} N_{i}, 
\end{array}
\end{equation}
and hence $\alga$ contains all the $N_{i}$.  

From this we find that $\alga$ contains $T = X - \sum_{i} N_{i}$,  and hence 
$\alga \supseteq \langle T, T'' \rangle = \algt$.  
\end{proof}

\section{Two-dimensional subalgebras of $\soo$}

In this section, we classify the two-dimensional (solvable) subalgebras of $\soo$ 
according to 
two cases:  Subalgebras containing a semisimple element (see Theorems \ref{prop:regSS2} and \ref{lemma:T1-1}), and subalgebras not containing any semisimple elements (see Theorems \ref{lemma:T10} and \ref{2dimNilpp}).  Again, without loss of generality, we assume that each solvable subalgebra is in the Borel subalgebra $\borel$. The results are summarized in Table \ref{twodimuu}.

\subsection{Two-dimensional subalgebras containing a semisimple element}  

\subsubsection{Regular Semisimple Elements}

Suppose $\alga \subset \borel$  is a solvable subalgebra of dimension $2$.  
By Lemma \ref{lemmaCartan},  we  can assume that if it contains semisimple elements,  then $\alga$ 
contains elements of $\algt$,  and that if it contains a Cartan subalgebra,  then it contains and hence 
equals $\algt$.  Suppose 
it contains a regular diagonal element $T_{a,b} = diag(a,b,-a,-b)$,  i.e.,  one such that 
the restriction of ${\rm ad}(T_{a,b})$ to $\algn$ is nonsingular.  It is easily seen that this amounts to   
$a, b  \ne 0, a \ne \pm b$.  Then 
we can assume $b = 1$,  $a \ne 0, \pm 1$.    

It is easy to check that,  for $a \ne 0, \pm 1$,  the eigenvalues of   ${\rm ad}(T_{a,1})$ restricted to 
$\algn$ are distinct, with the single exception of $a = 3$:  ${\rm ad}(T_{3,1})$ has the same 
eigenvalue for $X_{\alpha}$ and $X_{\beta}$.  This means that for any $r,s \in \Cb$, not both zero, 
$\langle T_{3,1} , rX_{\alpha} + s X_{\beta} \rangle$ is two-dimensional.  

\begin{lemma}\label{Lemma:T31}
If $r,s$ are both nonzero, then there is a diagonal element in 
$G$ that conjugates 
$rX_{\alpha} + s X_{\beta} $ to $X_{\alpha} + X_{\beta}$; it also fixes $T_{3,1}$.  
\end{lemma}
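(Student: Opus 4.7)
The plan is to exhibit an explicit diagonal symplectic matrix $D$ that accomplishes the conjugation. First I would characterize the diagonal elements of $\SptwoC$: starting from $DJD^{t} = J$ with the specific form of $J$ in Eq. (\ref{eq:matrixJ}), one finds that a diagonal $D = \mathrm{diag}(d_1, d_2, d_3, d_4)$ lies in $\SptwoC$ if and only if $d_1 d_3 = d_2 d_4 = 1$, so $D = \mathrm{diag}(d_1, d_2, d_1^{-1}, d_2^{-1})$.

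Next I would compute the action of such a $D$ on the relevant root vectors. From Eqs. (\ref{eq:rootvectors})--(\ref{eq:rootvectors2}), we have $X_\alpha = E_{2,4}$ and $X_\beta = E_{1,2} - E_{4,3}$. Conjugation by $D$ rescales a matrix unit $E_{i,j}$ by $d_i/d_j$. Therefore $D X_\alpha D^{-1} = (d_2/d_4)\,X_\alpha = d_2^{2}\,X_\alpha$, while $D X_\beta D^{-1} = (d_1/d_2)\,X_\beta$ (the two nonzero entries of $X_\beta$ are rescaled by the same factor, as required for $D X_\beta D^{-1}$ to remain a scalar multiple of $X_\beta$).

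The conjugation therefore sends $r X_\alpha + s X_\beta$ to $r d_2^{2}\, X_\alpha + s (d_1/d_2)\, X_\beta$. Since $r, s \ne 0$, we may pick $d_2$ to be any square root of $r^{-1}$, and then set $d_1 = d_2 / s$; this forces $r d_2^{2} = 1$ and $s d_1 /d_2 = 1$, so the image is $X_\alpha + X_\beta$.

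Finally, because $D$ is diagonal, it commutes with every element of $\algt$, in particular with $T_{3,1}$, so $D T_{3,1} D^{-1} = T_{3,1}$. The main ``obstacle'' is really just the bookkeeping to verify that the required $d_i$'s are consistent with the symplectic condition $d_3 = d_1^{-1}$, $d_4 = d_2^{-1}$; since no further constraint ties $d_1$ to $d_2$, this is immediate, and the lemma follows.
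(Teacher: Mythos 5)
Your proof is correct and is essentially the paper's own argument: the paper simply writes down the matrix ${\rm diag}(\tfrac{1}{su}, \tfrac{1}{u}, su, u)$ with $u^{2}=r$, which is exactly your $D$ with $d_{2}$ a square root of $r^{-1}$ and $d_{1}=d_{2}/s$. Your version just makes explicit the verification (symplectic condition $d_1d_3=d_2d_4=1$, the rescaling of $E_{i,j}$ by $d_i/d_j$, and the consistency of the two entries of $X_\beta$) that the paper leaves to the reader.
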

\begin{proof}
If $u^{2} = r$, then the diagonal element 
${\rm diag}( \frac{1}{su} , \frac{1}{u}, su, u ) \in Sp(4, \Cb)$ conjugates 
$rX_{\alpha} + s X_{\beta} $ to $X_{\alpha} + X_{\beta}$ and, being diagonal,  fixes $T_{3,1}$.  
\end{proof}

By Lemma \ref{Lemma:T31},  
$\langle T_{3,1} , rX_{\alpha} + s X_{\beta} \rangle  \sim \langle T_{3,1} , X_{\alpha} + X_{\beta} \rangle$,
provided $r,s \ne 0$.  

If $a \ne 3$,  then  ${\rm ad}(T_{a,1})$ has distinct eigenvalues on $\algn$, so the only 
two-dimensional subalgebras of $\borel$ containing $T_{a,1}$ are of the form 
$\langle   T_{a,1} , X_{\gamma} \rangle$,  where $\gamma$ is one of $\alpha, \beta, 
\alpha+ \beta, \alpha+ 2\beta$.  
\medskip

 Let $W\in Sp(4,\mathbb{C})$ be as in Eq. \eqref{Eq:matrixW}. Then 
\begin{equation} \label{eq:Wconjugacy}
\begin{array}{llllll}
WT_{a,b}W^{-1} &=& T_{b,a},  \\
WX_{\alpha}W^{-1}  &=&  X_{\alpha+2\beta}, \\
WX_{\beta}W^{-1}  &=& - X_{\beta}^{t} \notin \borel, \\
WX_{\alpha + \beta}W^{-1}  &= & - X_{\alpha + \beta},  \\
WX_{\alpha +2 \beta}W^{-1}  &=&  X_{ \alpha}. 
\end{array}
\end{equation}

Accordingly,  under the Adjoint action of $P$,  
\begin{equation}
\begin{array}{lllllllll}
\langle T_{a,1}, X_{\alpha + \beta} \rangle &\sim&
\langle T_{1,a}, X_{\alpha + \beta} \rangle
&=& \langle T_{a^{-1},1}, X_{\alpha + \beta} \rangle, \\ 
\langle T_{a,1}, X_{\alpha} \rangle &\sim&
\langle T_{1,a}, X_{\alpha +2 \beta} \rangle
&=& \langle T_{a^{-1},1}, X_{\alpha +2 \beta} \rangle.  
\end{array}
\end{equation}

The element $A$ of Eq.  \eqref{eq:matrixAinSp} conjugates $X_{\beta}$ into 
$X_{\alpha+\beta}$ and $T_{a,1}$ into $- T_{-a,1}$, so 
\begin{equation}
\langle T_{a,1} , X_{\beta} \rangle 
\sim \langle T_{-a,1}, X_{\alpha+\beta} \rangle 
\sim \langle T_{-a^{-1},1}, X_{\alpha +\beta} \rangle 
\sim \langle T_{a^{-1},1}, X_{\beta} \rangle.  
\end{equation}   

Similarly,  if $J$ is the matrix of Eq.  \eqref{eq:matrixJ},  then $AJ$ 
conjugates $T_{a,1}$ to $T_{-a,1}$.  It fixes $X_{\alpha}$ and takes 
$X_{\beta}$, $X_{\alpha + \beta}$, and $X_{\alpha + 2 \beta}$ out of $\borel$.  
In particular,  $\langle T_{a,1} , X_{\alpha} \rangle \sim 
\langle T_{-a,1} , X_{\alpha} \rangle$. 
 
We summarize:
any two-dimensional algebra containing a regular diagonal matrix  
but not $\algt$ 
is conjugate under $Sp(4,\Cb)$ to $\langle T_{3,1} , X_{\alpha} + X_{\beta} \rangle$ or to
$\langle T_{a,1}, X_{\alpha} \rangle$
or $\langle T_{a,1}, X_{\beta} \rangle$,  for some $a \in \Cb$, 
with $a \ne 0, \pm 1$,  
with the understanding that
 $\langle T_{a,1}, X_{\alpha} \rangle \sim 
\langle T_{-a,1}, X_{\alpha} \rangle$ and 
$\langle T_{a,1}, X_{\beta} \rangle \sim \langle T_{a^{-1},1}, X_{\beta} \rangle$. 

Next we consider the possibility of other equivalences between pairs of these 
algebras.  Suppose $X \in \algn$ is such that 
$\langle T_{a,1} , X \rangle$ 
is two-dimensional,  with $a \ne 0, \pm 1$.  
Note that every element of $\langle T_{a,1} , X \rangle$
is of the form $c T_{a,1} + d X$,  for some $c,d \in \Cb$.  If $c \ne 0$, 
this matrix has distinct eigenvalues and hence is semisimple.  However,  if $c=0$,  the matrix 
is nilpotent,  and unless $d=0$,  it has rank equal to the rank of $X$.  
Now ${\rm rank}(X_{\alpha}) = 1$,
${\rm rank}(X_{\beta}) = 2$,  and
${\rm rank}(X_{\alpha} +  X_{\beta}) = 3$,   so
$\langle T_{3,1} , X_{\alpha} + X_{\beta} \rangle$ 
is not equivalent to 
$\langle T_{a,1}, X_{\alpha} \rangle$ or to 
$\langle T_{a,1}, X_{\beta} \rangle$,  for any $a \ne 0, \pm 1$.   
Moreover,  
$\langle T_{a,1}, X_{\alpha} \rangle$ cannot be equivalent to 
$\langle T_{b,1}, X_{\beta} \rangle$,  for any $a,b \ne 0, \pm 1$.

Suppose $\langle T_{a,1} ,  X_{\gamma} \rangle$ is conjugate by 
$g \in Sp(4, \Cb)$ to 
$\langle T_{b,1} ,  X_{\gamma} \rangle$,    
for some $b \in \Cb$,  $b \ne  0, \pm 1$,  where $\gamma = \alpha$ or
$\beta$.  

The elements of $\langle T_{b,1} ,  X_{\gamma} \rangle$ are all of the form 
$cT_{b,1} + dX_{\gamma}$,  for $c,d \in \Cb$,  and the semisimple ones 
are those for which $c \ne 0$.

For $z \in \Cb$,  consider \begin{small}
\begin{equation}\label{eq:rootconj}
\begin{array}{rl}
&(id + z X_{\gamma})(cT_{b,1} + dX_{\gamma})(id + z X_{\gamma})^{-1} \\
&=(id + z X_{\gamma}) (cT_{b,1} + dX_{\gamma})(id - z X_{\gamma}) \\
&= (cT_{b,1} + dX_{\gamma}) + z [X_{\gamma},cT_{b,1} + dX_{\gamma}] 
- z^{2}X_{\gamma} (cT_{b,1} + dX_{\gamma}) X_{\gamma}  \\
&= (cT_{b,1} + dX_{\gamma}) -cz [T_{b,1},X_{\gamma}] 
-c z^{2}X_{\gamma} T_{b,1}  X_{\gamma}  \\
&= (cT_{b,1} + dX_{\gamma}) -cz \cdot \gamma(T_{b,1})X_{\gamma} 
-c z^{2}X_{\gamma} T_{b,1}  X_{\gamma}  \\
&= cT_{b,1} + (d-cz \cdot \gamma(T_{b,1})) X_{\gamma} 
-c z^{2}X_{\gamma} T_{b,1}  X_{\gamma}. 
\end{array}
\end{equation}
\end{small}

A simple calculation shows that the last term is zero,  and,  if $c \ne 0$,  choosing 
$z = \frac{d}{c \gamma(T_{b,1})}$ shows that the semisimple element $cT_{b,1} + dX_{\gamma} 
\in \langle T_{b,1} ,  X_{\gamma} \rangle$ is conjugate to $cT_{b,1}$  
by an element of the form $(id + z X_{\gamma})$,  which 
normalizes $\langle T_{b,1} ,  X_{\gamma} \rangle$.

Since $gT_{a,1}g^{-1}$ is a semisimple element 
of $\langle T_{b,1} , X_{\gamma} \rangle$ there must be  $c,d$ with $c \ne 0$ so that
the eigenvalues $\pm a, \pm 1$ of $T_{a,1}$ equal those of $cT_{b,1}$.  The 
only possibilities are $c = \pm 1$ and $b = \pm a$ or $c = \pm a$ and $b = \pm \frac{1}{a}$.   

This amounts to saying that $T_{a,1}$ can be conjugated to 
$\pm T_{a,1}$,  $\pm T_{-a,1}$, $\pm T_{1,a}$,  or $\pm T_{1,-a}$,  and we have already seen
that all of these are possible.        

Moreover,  the eigenvalue of $\ad(T_{a,1})$ or $\ad(T_{-a,1})$ corresponding to the 
eigenvector $X_{\alpha}$ is $\alpha(T_{a,1} )= 2$ or $\alpha(T_{-a,1}) = 2$,  
while the eigenvalue of $\ad(T_{1,a})$ or $\ad(T_{1,-a})$  is $2a \ne \pm 2$ or $-2a \ne \pm 2$,
respectively.  This shows that there are no equivalences between 
$\langle T_{a,1} , X_{\alpha} \rangle$ and 
$\langle T_{1, \pm a} , X_{\alpha} \rangle$. 

Similarly,  the eigenvalue of $\ad(T_{a,1})$ or $\ad(-T_{1,a})$ corresponding to the 
eigenvector $X_{\beta}$ is $\beta(T_{a,1} )= a-1$ or $\beta(-T_{1,a}) = a-1$,  
while the eigenvalue of $\ad( \pm T_{-a,1})$ or $\ad(\pm T_{1,-a})$  is $\pm (-a-1) \ne a-1$ 
or $\pm (1+a) \ne a-1$,
respectively.  This shows that there are no equivalences between 
$\langle T_{a,1}, X_{\beta} \rangle$ and 
$\langle T_{1, - a}, X_{\beta} \rangle$ or
$\langle T_{- a,1}, X_{\beta} \rangle$. 

We combine the above remarks.

\begin{theorem}\label{prop:regSS2}
Up to equivalence under the 
action of $Sp(4, \Cb)$,  a complete set of representatives for the two-dimensional subalgebras 
containing a regular semisimple element   
is
\begin{equation}
\begin{array}{llll}
&\algt  \\
&\langle T_{3,1}, X_{\alpha} + X_{\beta} \rangle, \\
&\langle T_{a,1}, X_{\alpha} \rangle, \quad  a \ne 0, \pm 1,  \\
&\langle T_{a,1}, X_{\beta} \rangle,  \quad a \ne 0, \pm 1,   
\end{array}
\end{equation}
with the understanding that 
$\langle T_{a,1}, X_{\alpha} \rangle 
\sim \langle T_{-a,1}, X_{\alpha} \rangle 
$
and 
$\langle T_{a,1} , X_{\beta} \rangle 
\sim \langle T_{a^{-1},1} , X_{\beta} \rangle$, but 
that otherwise these algebras are pairwise  not equivalent under $Sp(4, \Cb)$. 

We also note that 
\begin{equation}
\begin{array}{lllll}
\langle T_{a,1} , X_{\alpha} \rangle 
&\sim&  \langle T_{\pm a^{-1},1} , X_{\alpha + 2 \beta} \rangle,~ {\rm and}\\
\langle T_{a,1} , X_{\beta} \rangle 
&\sim&  \langle T_{-a,1} , X_{\alpha +  \beta} \rangle
\sim  \langle T_{- a^{-1},1} , X_{\alpha +  \beta} \rangle. 
\end{array}
\end{equation}
\end{theorem}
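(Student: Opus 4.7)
The plan is to recapitulate the discussion developed in the preceding paragraphs. Every solvable subalgebra of $\soo$ is contained in a Borel, so after conjugation we may assume $\alga \subset \borel$. By Lemma \ref{lemmaCartan}, if $\alga$ contains semisimple elements then it may be taken to meet $\algt$, and if it contains a Cartan subalgebra then it may be taken to contain $\algt$. Consequently, a two-dimensional $\alga$ containing a regular semisimple element is either equal to $\algt$ (our first representative) or, after scaling, is spanned by $T_{a,1}$ with $a \ne 0, \pm 1$ together with a nonzero $\ad(T_{a,1})$-eigenvector $X \in \algn$.

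Next I would determine the eigenvalues of $\ad(T_{a,1})$ on the root-vector basis of $\algn$: namely $\alpha(T_{a,1}) = 2$, $\beta(T_{a,1}) = a-1$, $(\alpha+\beta)(T_{a,1}) = a+1$, and $(\alpha+2\beta)(T_{a,1}) = 2a$. For $a \ne 0, \pm 1$ these are distinct except when $a = 3$, where $\alpha(T_{3,1}) = \beta(T_{3,1}) = 2$. In the generic case $X$ must be proportional to a single root vector, and an appropriate diagonal conjugation normalizes $X$ to one of $X_\alpha, X_\beta, X_{\alpha+\beta}, X_{\alpha+2\beta}$ without disturbing $T_{a,1}$; in the exceptional case $a = 3$, any $X = rX_\alpha + sX_\beta$ is admissible, and by Lemma \ref{Lemma:T31} the subcase $r,s \ne 0$ contributes the single representative $\langle T_{3,1}, X_\alpha + X_\beta\rangle$, the degenerate subcases $r = 0$ or $s = 0$ being absorbed into the generic families.

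Then I would use the elements $W$ and $A$ from (\ref{Eq:matrixW}) and (\ref{eq:matrixAinSp}), together with $J$ from (\ref{eq:matrixJ}), to identify equivalences: $W$ collapses the $X_{\alpha+2\beta}$ family onto the $X_\alpha$ family via $a \mapsto a^{-1}$ and the $X_{\alpha+\beta}$ family onto the $X_\beta$ family, then $A$ supplies $\langle T_{a,1}, X_\beta\rangle \sim \langle T_{a^{-1},1}, X_\beta\rangle$, and $AJ$ supplies $\langle T_{a,1}, X_\alpha\rangle \sim \langle T_{-a,1}, X_\alpha\rangle$. This reduces the list to the four families named in the theorem.

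The main obstacle is the final inequivalence step. The rank of the nilpotent summand separates the three qualitatively distinct families: $X_\alpha$ has rank $1$, $X_\beta$ has rank $2$, and $X_\alpha + X_\beta$ has rank $3$, so $\langle T_{3,1}, X_\alpha + X_\beta\rangle$ cannot be conjugate to any member of the other two families, nor can the $X_\alpha$ and $X_\beta$ families be mixed. Within each family, I would argue that any proposed conjugation $g$ must send the semisimple generator $T_{a,1}$ to a semisimple element of the target algebra; by the calculation (\ref{eq:rootconj}), every semisimple element of $\langle T_{b,1}, X_\gamma\rangle$ is conjugate, inside that algebra via $\mathrm{id} + zX_\gamma$, to a scalar multiple $cT_{b,1}$. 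Matching the eigenvalues $\pm a, \pm 1$ of $T_{a,1}$ to $\pm cb, \pm c$ leaves only the combinations $(c,b) \in \{(\pm 1, \pm a), (\pm a, \pm a^{-1})\}$, and comparing the corresponding root-vector $\ad$-eigenvalues in each of these sign/reciprocal possibilities shows that no further identifications survive beyond the two recorded in the theorem, completing the classification.
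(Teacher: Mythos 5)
Your proposal follows the paper's own argument essentially step for step: reduce to $\borel$ and $\algt$ via Lemma \ref{lemmaCartan}, compute the $\ad(T_{a,1})$-eigenvalues $2$, $a-1$, $a+1$, $2a$ on the positive root vectors, isolate the exceptional case $a=3$ via Lemma \ref{Lemma:T31}, use $W$, $A$, and $AJ$ to collapse the four root-vector families to two, separate the surviving representatives by the rank of their nilpotent elements, and settle equivalences within a family by conjugating semisimple elements to multiples of $T_{b,1}$ via the computation \eqref{eq:rootconj} and then comparing adjoint eigenvalues on the relevant root vector. The one slip is in your attribution of the conjugations: $W$ sends $X_{\beta}$ out of $\borel$ and fixes $X_{\alpha+\beta}$ up to sign, so it acts \emph{within} the $X_{\alpha+\beta}$ family by $a \mapsto a^{-1}$ rather than collapsing that family onto the $X_{\beta}$ family; it is $A$ that interchanges $X_{\beta}$ and $X_{\alpha+\beta}$ (sending $T_{a,1}$ to $-T_{-a,1}$), and the equivalence $\langle T_{a,1}, X_{\beta}\rangle \sim \langle T_{a^{-1},1}, X_{\beta}\rangle$ comes only from composing $A$ with $W$ through the $X_{\alpha+\beta}$ family, not from $A$ alone. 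This is a bookkeeping error rather than a gap: the set of equivalences you assert is exactly the correct one, and the elements you name suffice to establish them once the roles are sorted out.
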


\subsubsection{Algebras Containing Non-Regular Semisimple Elements}  \label{subsection.nonreg.ss}

Any non-regular semisimple element of $\algt$ is a scalar multiple 
of $T_{0,1}$,  $T_{1,0}$, $T_{1,1}$,  or $T_{1,-1}$.

\paragraph{$T_{0,1}$}\label{subsubT01dim2}

Suppose a two-dimensional solvable subalgebra $\alga \subset \borel$ contains 
$T_{0,1}$ but not $\algt$. Eigenvectors of $\ad(T_{0,1} )$ in $\algn$ are 
scalar multiples of the root vectors $X_{\alpha}$, $X_{\beta}$, $X_{\alpha + \beta}$, or $X_{\alpha + 2 \beta}$, 
which have the distinct eigenvalues $2, -1, 1$,  and $0$,  respectively.  The corresponding 
algebras are 
$\langle  T_{0,1}, X_{\alpha} \rangle$, 
$\langle  T_{0,1}, X_{\beta} \rangle$, 
$\langle  T_{0,1}, X_{\alpha + \beta} \rangle$, and 
$\langle  T_{0,1}, X_{\alpha + 2 \beta} \rangle$.  

If $A$ is the matrix defined in Eq. \eqref{eq:matrixAinSp}, 
then $A$ conjugates $T_{0,1}$ to $-T_{0,1}$,  $X_{\beta}$ to $X_{\alpha + \beta}$,  and 
$X_{\alpha + \beta}$ to $-X_{\beta}$.  This shows that $\langle  T_{0,1}, X_{\beta} \rangle$ is equivalent to 
$\langle  T_{0,1}, X_{\alpha + \beta} \rangle$,  so it suffices to consider $\langle  T_{0,1}, X_{\beta} \rangle$.  

A comparison of the eigenvalues of $\ad (T_{0,1})$ shows that 
$\langle  T_{0,1}, X_{\alpha} \rangle$  cannot be equivalent to $\langle  T_{0,1}, X_{ \beta} \rangle$.  

We observe that $\langle  T_{0,1}, X_{\alpha + 2 \beta} \rangle$   
is abelian,  so it is inequivalent to $\langle  T_{0,1}, X_{\alpha } \rangle$ and $\langle  T_{0,1}, X_{ \beta} \rangle$.

We summarize in the following lemma.

\begin{lemma}\label{lemma:T01}
The algebras 
$\langle  T_{0,1}, X_{\alpha} \rangle$, 
$\langle  T_{0,1}, X_{\beta} \rangle$, 
and 
$\langle  T_{0,1}, X_{\alpha + 2 \beta} \rangle$ 
are all inequivalent.  
Up to equivalence,  they are representatives of all 
two-dimensional solvable subalgebras $\alga \subset \borel$ containing 
$T_{0,1}$ but not $\algt$.

Also,  $\langle  T_{0,1}, X_{\beta} \rangle \sim  
\langle  T_{0,1}, X_{\alpha + \beta} \rangle$.   Moreover,  $\langle  T_{0,1}, X_{\alpha + 2 \beta} \rangle$  is abelian.  
\end{lemma}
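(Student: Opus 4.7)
The plan is to proceed in three steps: first reduce any such $\alga$ to one of four candidate algebras indexed by positive roots, then verify the stated equivalence and abelianness, and finally distinguish the remaining equivalence classes.

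First, suppose $\alga \subset \borel$ is a two-dimensional solvable subalgebra containing $T_{0,1}$ but not equivalent to $\algt$. I would pick a second generator $Y$ and, after subtracting a scalar multiple of $T_{0,1}$, arrange that $Y$ has no $T_{0,1}$-component in $\algt$. If $Y$ retained any further $\algt$-component, Lemma \ref{lemmaFullss} would produce a conjugate of $\alga$ containing $\algt$, which by dimension would force $\alga \sim \algt$, contrary to hypothesis. Hence $Y \in \algn$, and then $[T_{0,1}, Y] \in \alga \cap \algn = \langle Y \rangle$ forces $Y$ to be an eigenvector of $\ad(T_{0,1})$. Since the four eigenvalues $2, -1, 1, 0$ of $\ad(T_{0,1})$ on $\algn$ are distinct, $Y$ must be a scalar multiple of one of $X_\alpha, X_\beta, X_{\alpha+\beta}, X_{\alpha+2\beta}$, yielding the four candidate algebras $\langle T_{0,1}, X_\gamma \rangle$.

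Next, a direct computation with the matrix $A$ of Eq.~\eqref{eq:matrixAinSp} gives $A T_{0,1} A^{-1} = -T_{0,1}$ and $A X_\beta A^{-1} = \pm X_{\alpha+\beta}$, so $\langle T_{0,1}, X_\beta\rangle \sim \langle T_{0,1}, X_{\alpha+\beta}\rangle$. Abelianness of $\langle T_{0,1}, X_{\alpha+2\beta}\rangle$ is immediate from $(\alpha+2\beta)(T_{0,1}) = 0$.

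The main obstacle is the pairwise inequivalence of the three remaining candidates under $Sp(4, \Cb)$. Abelianness is a conjugacy invariant, so $\langle T_{0,1}, X_{\alpha+2\beta}\rangle$ is separated from the other two at once. The subtler point is distinguishing $\langle T_{0,1}, X_\alpha\rangle$ from $\langle T_{0,1}, X_\beta\rangle$, since these are abstractly isomorphic two-dimensional nonabelian Lie algebras. Here I would exploit that in each such algebra the nilpotent elements are exactly the scalar multiples of the root vector $X_\gamma$: any $cT_{0,1} + dX_\gamma$ with $c \ne 0$ has $cT_{0,1}$ as its nontrivial semisimple Jordan part, and so is non-nilpotent. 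A putative $g \in Sp(4, \Cb)$ conjugating $\langle T_{0,1}, X_\alpha\rangle$ to $\langle T_{0,1}, X_\beta\rangle$ would therefore send $X_\alpha$ to a nonzero scalar multiple of $X_\beta$. This is impossible because, by Table~\ref{tab:nonss}, $X_\alpha$ has Jordan type $(2,1,1)$ (rank $1$) while $X_\beta$ has Jordan type $(2,2)$ (rank $2$), and conjugation preserves rank. This rules out the last potential equivalence and completes the classification.
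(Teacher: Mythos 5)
Your proof is correct and follows essentially the same route as the paper's: reduce the second generator to an eigenvector of $\ad(T_{0,1})$ in $\algn$ (your use of Lemma \ref{lemmaFullss} to rule out an independent $\algt$-component makes explicit a step the paper leaves implicit), use the matrix $A$ for $\langle T_{0,1}, X_{\beta}\rangle \sim \langle T_{0,1}, X_{\alpha+\beta}\rangle$, and use abelianness to separate $\langle T_{0,1}, X_{\alpha+2\beta}\rangle$. The only divergence is in distinguishing $\langle T_{0,1}, X_{\alpha}\rangle$ from $\langle T_{0,1}, X_{\beta}\rangle$: the paper compares the eigenvalues of $\ad(T_{0,1})$ on the nilpotent lines, while you compare the ranks of the nilpotent elements; both are conjugation invariants the paper itself uses in neighboring arguments, so this is a cosmetic rather than substantive difference.
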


\paragraph{$T_{1,0}$}\label{subsubT10dim2}
Suppose a two-dimensional solvable subalgebra $\alga \subset \borel$ contains 
$T_{1,0}$ but not $\algt$. Eigenvectors of $\ad(T_{1,0} )$ in $\borel$ are 
scalar multiples of $X_{\alpha}$ or $X_{\alpha + 2 \beta}$ or linear combinations of 
$X_{\beta}$ and $X_{\alpha + \beta}$,  with eigenvalues $0,  2,  1$,  respectively.

If 
$\begin{pmatrix}
 a & b \\
c &d 
\end{pmatrix}  \in SL(2, \Cb)$,  then  
the matrix 
\begin{equation} \label{Matrix24rotate}
\begin{pmatrix}
1&0&0&0 \\
0& a &0 & b \\
0&0&1&0 \\
0&c &0 &d 
\end{pmatrix} 
\end{equation}
centralizes $T_{1,0}$ and conjugates $X_{\beta}$ to 
\begin{equation}\begin{pmatrix} 
0& d &0 & - b \\
0&0&-b&0 \\
0&0&0&0 \\
0&0&-d&0 
\end{pmatrix},\end{equation}  
an arbitrary nonzero linear combination of $X_{\beta}$ and $X_{\alpha + \beta}$.  

We find that the algebras 
$\langle  T_{1,0}, cX_{\beta} +dX_{\alpha + \beta}  \rangle$ for any $c, d \in \Cb$, not both zero
are all pairwise equivalent.  In particular,  it suffices to consider 
$\langle  T_{1,0}, X_{\beta}   \rangle$. 

Since $\langle  T_{1,0}, X_{\alpha} \rangle$ is abelian,  it is not equivalent to  
$\langle  T_{1,0}, X_{\beta}  \rangle$ or to 
$\langle  T_{1,0}, X_{\alpha + 2 \beta} \rangle$.   
A comparison of the eigenvalues of $\ad(T_{1,0})$ shows that $\langle  T_{1,0}, X_{\beta} \rangle$ cannot
be equivalent to $\langle  T_{1,0}, X_{\alpha + 2 \beta} \rangle$.

We summarize in the following lemma.

\begin{lemma}\label{lemma:T10}
The algebra 
$\langle  T_{1,0}, X_{\alpha} \rangle$ is abelian.
The algebras
$\langle  T_{1,0}, cX_{\beta} +d X_{\alpha + \beta}  \rangle$ for any $c, d \in \Cb$, not both zero,  
are all equivalent.  
The algebras  $\langle  T_{1,0}, X_{\alpha} \rangle$,  $\langle  T_{1,0}, X_{\beta} \rangle$,
and   
$\langle  T_{1,0}, X_{\alpha + 2 \beta} \rangle$ 
are all inequivalent.  
Up to equivalence,  they are representatives of all 
two-dimensional solvable subalgebras $\alga \subset \borel$ containing 
$T_{1,0}$ but not $\algt$.
\end{lemma}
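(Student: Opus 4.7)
The plan is to pin down the form of $\alga$ using the closure condition $[T_{1,0}, \alga] \subseteq \alga$, and then to distinguish the resulting subalgebras via conjugation-invariant features.

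Since $\alga$ is $2$-dimensional, contains $T_{1,0}$, and is distinct from $\algt$, the intersection $\alga \cap \algt$ is exactly $\langle T_{1,0}\rangle$. Pick a second generator $X \in \alga$ and write $X = T_X + N_X$ with $T_X \in \algt$, $N_X \in \algn$, $N_X \ne 0$. If $T_X$ were linearly independent from $T_{1,0}$, Lemma \ref{lemmaFullss} (applied with $X' = T_{1,0}$, $N' = 0$) would produce $b \in B$ with $\alga^b \supseteq \algt$, forcing $\alga^b = \algt$ by dimension; this places $\alga$ in the equivalence class of $\algt$, already recorded in Theorem \ref{prop:regSS2} and so excluded here. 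Otherwise $T_X$ is a scalar multiple of $T_{1,0}$, and we may subtract it to assume $X = N_X \in \algn$. Then $[T_{1,0}, X] \in \alga \cap \algn = \langle X\rangle$ forces $X$ to be an eigenvector of $\ad(T_{1,0})$ on $\algn$.

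The eigenspace decomposition of $\algn$ under $\ad(T_{1,0})$ is $\langle X_\alpha\rangle \oplus \langle X_\beta, X_{\alpha+\beta}\rangle \oplus \langle X_{\alpha+2\beta}\rangle$ with eigenvalues $0, 1, 2$ respectively, so up to scalar $X$ is $X_\alpha$, $X_{\alpha+2\beta}$, or a nonzero combination $cX_\beta + dX_{\alpha+\beta}$. For the $1$-eigenspace case I would verify directly that the matrix in \eqref{Matrix24rotate} lies in $\SptwoC$, centralizes $T_{1,0}$, and sends $X_\beta$ to $dX_\beta - bX_{\alpha+\beta}$. Since $SL(2, \Cb)$ acts transitively on nonzero vectors in $\Cb^2$ through the column entries $(-b, d)$, every such algebra is conjugate to $\langle T_{1,0}, X_\beta\rangle$.

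For inequivalence of the three representatives, $\langle T_{1,0}, X_\alpha\rangle$ is abelian since $\alpha(T_{1,0}) = 0$, while the other two are non-abelian. To distinguish $\langle T_{1,0}, X_\beta\rangle$ from $\langle T_{1,0}, X_{\alpha+2\beta}\rangle$, I would use the rank of a nilpotent generator of $[\alga, \alga]$, a $\SptwoC$-conjugation invariant: this rank is $2$ for $X_\beta$ but only $1$ for $X_{\alpha+2\beta}$. The main obstacle is the Case-2 reduction in the first paragraph, where one must rule out generators whose diagonal part is transverse to $T_{1,0}$; once Lemma \ref{lemmaFullss} dispatches that possibility, the remainder of the argument is a direct eigenvalue analysis followed by explicit $\SptwoC$-conjugations.
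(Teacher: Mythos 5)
Your proof is correct and follows essentially the same route as the paper: reduce the second generator to an eigenvector of $\ad(T_{1,0})$ on $\algn$, use the matrix of Eq.~\eqref{Matrix24rotate} to collapse the $1$-eigenspace to $\langle T_{1,0}, X_{\beta}\rangle$, and separate the three representatives by conjugation invariants. The only (harmless) deviations are that you distinguish $\langle T_{1,0}, X_{\beta}\rangle$ from $\langle T_{1,0}, X_{\alpha+2\beta}\rangle$ by the rank of elements of the derived algebra rather than by comparing $\ad(T_{1,0})$-eigenvalues as the paper does, and that you make explicit, via Lemma~\ref{lemmaFullss}, the reduction to a nilpotent second generator that the paper leaves implicit.
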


\paragraph{Equivalences between subalgebras containing $T_{1,0}$ and $T_{0,1}$}

From Eq. \eqref{eq:Wconjugacy},  we see that  
$\langle T_{0,1} , X_{\alpha} \rangle 
\sim \langle T_{1,0} , X_{\alpha + 2 \beta} \rangle$ and 
$\langle T_{1,0} , X_{\alpha} \rangle 
\sim \langle T_{0,1} , X_{\alpha + 2 \beta} \rangle$.  We also see that 
$\langle  T_{1,0}, X_{\beta} \rangle \sim \langle  T_{1,0}, X_{\alpha + \beta} \rangle \sim   \langle  T_{0,1}, X_{\alpha + \beta} \rangle$.

Since $\langle T_{1,0} , X_{\alpha} \rangle$  is abelian and $\langle T_{0,1} , X_{\alpha} \rangle $ is not,  they are inequivalent.  
A comparison of the eigenvalues of $\ad(T_{1,0})$ and $\ad(T_{0,1})$ shows that neither 
$\langle  T_{1,0}, X_{\beta} \rangle$ nor $\langle  T_{0,1}, X_{\beta} \rangle$ can
be equivalent to $\langle  T_{1,0}, X_{\alpha } \rangle$, 
$\langle  T_{1,0}, X_{\alpha + 2 \beta} \rangle$,
$\langle  T_{0,1}, X_{\alpha} \rangle$, or 
$\langle  T_{0,1}, X_{\alpha + 2 \beta} \rangle$.

We summarize in the following lemma.

\begin{lemma}\label{lemma:T10T01}
A two-dimensional solvable algebra containing $T_{0,1}$ or $T_{1,0}$ 
is equivalent to one of the following: 
\begin{equation}
\begin{array}{lllllll}
\algt, \qquad \langle  T_{1,0}, X_{\alpha} \rangle, &   \langle  T_{1,0}, X_{\beta} \rangle, & \langle  T_{1,0}, X_{\alpha  +2 \beta} \rangle.    
\end{array}
\end{equation}
These algebras are all pairwise inequivalent.
The algebra $\langle  T_{1,0}, X_{\alpha} \rangle$ is abelian.
\end{lemma}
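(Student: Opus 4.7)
The plan is to assemble the statement from Lemmas \ref{lemma:T01} and \ref{lemma:T10} together with the equivalences noted in the paragraph immediately preceding the lemma statement, which come from conjugation by the matrices $W$ and $A$ of Eqs.~\eqref{Eq:matrixW} and~\eqref{eq:matrixAinSp}. A two-dimensional solvable $\alga \subset \borel$ containing $T_{0,1}$ or $T_{1,0}$ either also contains $\algt$, in which case Lemma \ref{lemmaCartan} gives $\alga \sim \algt$, or it contains exactly one of the two non-regular semisimple directions (up to scalar), and then Lemma \ref{lemma:T01} or \ref{lemma:T10} produces one of six representatives: $\langle T_{0,1}, X_{\alpha}\rangle$, $\langle T_{0,1}, X_{\beta}\rangle$, $\langle T_{0,1}, X_{\alpha+2\beta}\rangle$, and the three analogous ones with $T_{1,0}$.

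Next I would collapse this list of six to three. By Eq.~\eqref{eq:Wconjugacy}, conjugation by $W$ swaps $T_{1,0} \leftrightarrow T_{0,1}$ and $X_{\alpha} \leftrightarrow X_{\alpha+2\beta}$, giving $\langle T_{0,1}, X_{\alpha}\rangle \sim \langle T_{1,0}, X_{\alpha+2\beta}\rangle$ and $\langle T_{0,1}, X_{\alpha+2\beta}\rangle \sim \langle T_{1,0}, X_{\alpha}\rangle$. For the remaining pair, Lemma \ref{lemma:T01} provides $\langle T_{0,1}, X_{\beta}\rangle \sim \langle T_{0,1}, X_{\alpha+\beta}\rangle$ via $A$; applying $W$ (which sends $X_{\alpha+\beta}$ to $-X_{\alpha+\beta}$ and stays in $\borel$) yields $\langle T_{1,0}, X_{\alpha+\beta}\rangle$, which by Lemma \ref{lemma:T10} is equivalent to $\langle T_{1,0}, X_{\beta}\rangle$. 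After this reduction only $\algt$, $\langle T_{1,0}, X_{\alpha}\rangle$, $\langle T_{1,0}, X_{\beta}\rangle$, and $\langle T_{1,0}, X_{\alpha+2\beta}\rangle$ remain.

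The final step is pairwise inequivalence. The Cartan $\algt$ consists entirely of semisimple elements, so it cannot be equivalent to any of the other three, which all contain a nonzero nilpotent. Among the remaining three, $\langle T_{1,0}, X_{\alpha}\rangle$ is abelian because $\alpha(T_{1,0})=0$, whereas $\beta(T_{1,0})=1$ and $(\alpha+2\beta)(T_{1,0})=2$, so the other two algebras are non-abelian. To separate $\langle T_{1,0}, X_{\beta}\rangle$ from $\langle T_{1,0}, X_{\alpha+2\beta}\rangle$, I would observe that any $g \in Sp(4,\Cb)$ carrying one to the other must send the semisimple line to itself, forcing $gT_{1,0}g^{-1} = cT_{1,0}$ with $c=\pm 1$ by matching the multiset of eigenvalues $\{\pm 1, 0, 0\}$; similarly the unique nilpotent line is preserved. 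The adjoint eigenvalues then force $2c = 1$, contradicting $c=\pm 1$, so no such $g$ exists.

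I do not anticipate any conceptual obstacle; the entire argument is assembled from results already proved. The only care required is in the bookkeeping of the $W$- and $A$-conjugations in the collapse step, since $W$ does not preserve $X_{\beta}$ inside $\borel$ and must be applied only after first passing from $X_{\beta}$ to $X_{\alpha+\beta}$ via $A$.
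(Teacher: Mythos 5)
Your proposal is correct and follows essentially the same route as the paper: it assembles the result from Lemmas \ref{lemma:T01} and \ref{lemma:T10}, collapses the six representatives to three using the $W$- and $A$-conjugations of Eqs.~\eqref{Eq:matrixW} and \eqref{eq:matrixAinSp} (including the same chain $\langle T_{1,0}, X_{\beta}\rangle \sim \langle T_{1,0}, X_{\alpha+\beta}\rangle \sim \langle T_{0,1}, X_{\alpha+\beta}\rangle \sim \langle T_{0,1}, X_{\beta}\rangle$), and separates the survivors by abelianness and by comparing the eigenvalues of $\ad(T_{1,0})$ on the nilpotent line. Your final step merely makes explicit the eigenvalue comparison that the paper states in one line.
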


\paragraph{$T_{1,1}$}\label{subsubT11dim2}

Suppose a two-dimensional solvable subalgebra $\alga \subset \borel$ contains 
$T_{1,1}$ but not $\algt$. Eigenvectors of $\ad(T_{1,1} )$ in $\borel$ are 
scalar multiples of $X_{\beta}$ or linear combinations of 
$X_{\alpha}$,  $X_{\alpha + \beta}$, and $X_{\alpha + 2 \beta}$.  In other words,
the $2$-eigenspace of $\ad(T_{1,1} )$ in $\borel$ is $\nf_{\p}$.  

Conjugating a matrix $T_{1,1} + N$,  with $N \in \algn_{\p}$,  by a matrix of the block form $\begin{pmatrix}
g & 0 \\ 
0 & g^{-t} \\
\end{pmatrix}$
fixes $T_{1,1}$ and takes $N$ to $gNg^{t}$,  so by Lemma $4.3$,
every subalgebra of the form 
$\langle  T_{1,1}, N \rangle$,  with $0 \ne N \in \algn_{\p}$,  is equivalent to either
$\langle  T_{1,1}, X_{\alpha} \rangle$ or $\langle  T_{1,1}, X_{\alpha+ \beta} \rangle$,  
and these two subalgebras are inequivalent   
because their nilpotent elements have different ranks.  

Since these subalgebras are nonabelian and $\langle  T_{1,1}, X_{\beta} \rangle$ is abelian,
the algebras 
$\langle  T_{1,1}, X_{\alpha} \rangle$, 
$\langle  T_{1,1}, X_{\beta} \rangle$, and
$\langle  T_{1,1}, X_{\alpha +  \beta} \rangle$
are inequivalent.  Any semisimple element in any of these subalgebras has eigenvalues 
that are a multiple of the eigenvalues of $T_{1,1}$.   Similarly, any semisimple element in 
any of the subalgebras mentioned in Lemma \ref{lemma:T10T01} has eigenvalues 
that are a multiple of the eigenvalues of $T_{1,0}$.
So none of these subalgebras containing $T_{1,1}$ can be equivalent to any of 
the subalgebras mentioned in Lemma \ref{lemma:T10T01}.

Finally,  the $2$-eigenspace of $\ad(T_{1,1})$ is $\algn_{\p}$,  and the equivalence class of 
$\langle  T_{1,1}, N \rangle$, for $N \in \algn_{\p}$,  is determined by ${\rm rank} (N)$.
We summarize.

\begin{lemma} \label{lemma:T11}
Every two-dimensional solvable subalgebra containing $T_{1,1}$ but not $\algt$ 
is equivalent to one of the inequivalent subalgebras 
$\langle  T_{1,1}, X_{\alpha} \rangle$, $\langle  T_{1,1}, X_{\beta} \rangle$, or $\langle  T_{1,1}, X_{\alpha+ \beta} \rangle$.  
None of these subalgebras is equivalent to any of 
the subalgebras mentioned in Lemma \ref{lemma:T10T01} or in Theorem \ref{prop:regSS2}.

We also note that,  for $a,b, c \in \Cb$, not all zero,  $\langle  T_{1,1}, a X_{\alpha} +b X_{\alpha + \beta} + cX_{\alpha+ 2\beta} \rangle \sim \langle  T_{1,1}, X_{\alpha + \beta} \rangle$ unless $  a X_{\alpha} +b X_{\alpha + \beta} + cX_{\alpha+ 2\beta} $ has rank $1$,  in which case
$\langle  T_{1,1}, a X_{\alpha} +b X_{\alpha + \beta} + cX_{\alpha+ 2\beta} \rangle \sim \langle  T_{1,1}, X_{\alpha } \rangle$. 
In particular,   $\langle  T_{1,1}, X_{\alpha+ 2\beta} \rangle \sim \langle  T_{1,1}, X_{\alpha} \rangle$.
\end{lemma}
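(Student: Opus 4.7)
The plan is to follow the reduction already sketched in the paragraph immediately before the statement. First, given any two-dimensional solvable subalgebra $\alga \subset \borel$ containing $T_{1,1}$ but not $\algt$, I would show $\alga$ must have the form $\langle T_{1,1}, N \rangle$ with $N \in \algn$ nonzero. Any other generator decomposes as $X = T + N'$ with $T \in \algt$ and $N' \in \algn$; if $T$ were not a scalar multiple of $T_{1,1}$, then Lemma \ref{lemmaFullss} would force $\alga \supseteq \algt$, contradicting the hypothesis. Subtracting the appropriate multiple of $T_{1,1}$ reduces the second generator to a nonzero element $N \in \algn$.

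Next, since $\alga$ is closed under the bracket and $[T_{1,1}, N] \in \algn$, we must have $[T_{1,1}, N] = \mu N$, so $N$ is an eigenvector of $\ad(T_{1,1})|_{\algn}$. A direct computation on root vectors shows that the eigenvalues are $0$ on the line spanned by $X_{\beta}$ and $2$ on $\nf_{\p} = \langle X_{\alpha}, X_{\alpha+\beta}, X_{\alpha + 2\beta} \rangle$. The eigenvalue-$0$ case yields $\alga \sim \langle T_{1,1}, X_{\beta} \rangle$ after scaling, and this subalgebra is evidently abelian. For the eigenvalue-$2$ case, $N$ is a nonzero element of $\nf_{\p}$, and block conjugation by $\mathrm{diag}(g, g^{-t})$ with $g \in \GtwoC$ centralizes $T_{1,1}$ while acting on $\nf_{\p}$, viewed as symmetric $2 \times 2$ matrices, by $N \mapsto g N g^{t}$. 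The congruence classification used in the proof of Lemma \ref{lemma:2dimSubsp.np} then shows that $N$ is equivalent to either $X_{\alpha}$ (rank $1$) or $X_{\alpha + \beta}$ (rank $2$), producing the two remaining subalgebras.

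To establish the pairwise inequivalences within the list, I would note that $\langle T_{1,1}, X_{\beta} \rangle$ is abelian while the other two are not, and that the nilpotent elements of $\langle T_{1,1}, X_{\alpha} \rangle$ and $\langle T_{1,1}, X_{\alpha+\beta} \rangle$ have different ranks. For inequivalence with the subalgebras in Lemma \ref{lemma:T10T01} and Theorem \ref{prop:regSS2}, I would observe that every semisimple element of our three subalgebras is a scalar multiple of $T_{1,1}$, with eigenvalue pattern $(\lambda, \lambda, -\lambda, -\lambda)$ having exactly two distinct eigenvalues. By contrast, semisimple elements of the subalgebras in Lemma \ref{lemma:T10T01} have three distinct eigenvalues, while those in Theorem \ref{prop:regSS2} have four, so no equivalence with any of them is possible. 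Finally, the concluding remark follows directly from the congruence classification: a nonzero $a X_{\alpha} + b X_{\alpha+\beta} + c X_{\alpha + 2\beta}$ corresponds to the symmetric matrix $\begin{pmatrix} c & b \\ b & a \end{pmatrix}$, whose rank equals $1$ or $2$ according to whether $ac - b^{2}$ vanishes, so in particular $X_{\alpha + 2\beta}$ has rank $1$ and yields the class of $\langle T_{1,1}, X_{\alpha} \rangle$. The main obstacle is essentially bookkeeping; the only substantive ingredient beyond the preceding lemmas is the congruence classification of symmetric $2 \times 2$ matrices, which is already contained in Section \ref{subsection:2dimsubalgnp}.
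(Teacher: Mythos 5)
Your proposal is correct and follows essentially the same route as the paper: identify the eigenspaces of $\ad(T_{1,1})$ on $\algn$ (the $X_{\beta}$ line with eigenvalue $0$ and $\algn_{\p}$ with eigenvalue $2$), reduce the $\algn_{\p}$ case via block conjugation by $\mathrm{diag}(g,g^{-t})$ and the congruence classification of symmetric $2\times 2$ matrices, and separate the classes by abelianness, rank of nilpotents, and the eigenvalue pattern of the semisimple elements. Your explicit reduction of the second generator to an element of $\algn$ via Lemma \ref{lemmaFullss}, and your explicit treatment of Theorem \ref{prop:regSS2}, only make more careful what the paper leaves implicit.
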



\paragraph{$T_{1,-1}$}\label{subsubT1-1dim2}

The $Sp(4, \Cb)$ matrix $A$ in Eq. \eqref{eq:matrixAinSp} conjugates $T_{1,1}$ to $T_{1,-1}$.  

It also takes $X_{\beta}$ to $-X_{\alpha + \beta}$ and vice versa,  fixes $X_{\alpha+2\beta}$,  and takes 
$X_{\alpha}$ out of $\borel$.

The $Sp(4, \Cb)$ matrix $AJ$ conjugates $T_{1,1}$ to $- T_{1,-1}$.  
It also fixes $X_{\alpha}$ and takes $X_{\beta}$, $X_{\alpha + \beta}$, and 
$X_{\alpha+2\beta}$ out of $\borel$.

We conclude that 
\begin{equation}
\begin{array}{llllllll}
\langle  T_{1,1}, X_{\alpha} \rangle &\sim & \langle  T_{1,-1}, X_{\alpha} \rangle \\
\langle  T_{1,1}, X_{\beta} \rangle   &\sim & \langle  T_{1,-1}, X_{\alpha+ \beta} \rangle \\
\langle  T_{1,1}, X_{\alpha+ \beta} \rangle  &\sim &   \langle  T_{1,-1}, X_{\beta} \rangle \\
\langle  T_{1,1}, X_{\alpha+ 2\beta} \rangle  &\sim &   \langle  T_{1,-1}, X_{\alpha+ 2\beta} \rangle.  
\end{array}
\end{equation} 
Since $\langle  T_{1,1}, X_{\alpha+ 2\beta} \rangle  \sim \langle  T_{1,1}, X_{\alpha} \rangle$,  we conclude 
that $ \langle  T_{1,-1}, X_{\alpha+ 2\beta} \rangle \sim \langle  T_{1,-1}, X_{\alpha} \rangle$.  
We also note that for $c \ne 0$,  $\langle  T_{1,-1}, X_{\beta} + c X_{\alpha+ 2\beta} \rangle \sim   \langle  T_{1,-1}, X_{\beta} \rangle$.
We summarize.

\begin{theorem} \label{lemma:T1-1}
Every two-dimensional solvable subalgebra containing $T_{1,1}$ or $T_{1,-1}$ but not $\algt$ 
is equivalent to one of the inequivalent subalgebras 
$\langle  T_{1,1}, X_{\alpha} \rangle$, $\langle  T_{1,1}, X_{\beta} \rangle$, or $\langle  T_{1,1}, X_{\alpha+ \beta} \rangle$.  
None of these subalgebras is equivalent to any of 
the subalgebras mentioned in Lemma \ref{lemma:T10T01} or in Theorem \ref{prop:regSS2}.

Moreover,  $\langle T_{1,-1}, X_{\alpha+ 2\beta} \rangle \sim \langle  T_{1,1}, X_{\alpha} \rangle$,  and, 
for any $c \in \Cb$,        
$\langle  T_{1,-1}, X_{\beta}$ $+ c X_{\alpha+ 2\beta} \rangle \sim   \langle  T_{1,-1}, X_{\beta} \rangle$. 
\end{theorem}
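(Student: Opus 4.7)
The plan is to note that the first assertion, on subalgebras containing $T_{1,1}$ but not $\algt$, is exactly the content of Lemma \ref{lemma:T11}, so all the work lies in transferring these results to the case of $T_{1,-1}$ and then establishing the pairwise inequivalence with the algebras in Lemma \ref{lemma:T10T01} and Theorem \ref{prop:regSS2}. Concretely, I would first observe that the matrix $A \in Sp(4,\Cb)$ of Eq.~\eqref{eq:matrixAinSp} satisfies $A T_{1,1} A^{-1} = T_{1,-1}$, and compute its action on the four positive root vectors in $\borel$. The outcome is that $A$ carries $X_{\beta} \leftrightarrow \pm X_{\alpha+\beta}$ and fixes $X_{\alpha+2\beta}$ up to sign, while sending $X_{\alpha}$ out of $\borel$. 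For the missing case I would bring in $AJ \in Sp(4,\Cb)$, which conjugates $T_{1,1}$ to a scalar multiple of $T_{1,-1}$ and fixes $X_{\alpha}$; this yields $\langle T_{1,1}, X_{\alpha}\rangle \sim \langle T_{1,-1}, X_{\alpha}\rangle$. Combining the two, every two-dimensional solvable subalgebra containing $T_{1,-1}$ but not $\algt$ is conjugate to one of the three representatives based at $T_{1,1}$.

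For the inequivalence statements, I would argue as follows. Any semisimple element of one of the listed subalgebras is a scalar multiple of $T_{1, \pm 1}$, so its eigenvalues have the multiset structure $\{c, c, -c, -c\}$; this is incompatible with the eigenvalue patterns of $T_{a,1}$ (four distinct eigenvalues when $a \ne 0, \pm 1$) and of $T_{1,0}$ or $T_{0,1}$ (three distinct eigenvalues). This rules out equivalence with any subalgebra in Lemma \ref{lemma:T10T01} or in Theorem \ref{prop:regSS2}. Mutual inequivalence among $\langle T_{1,1}, X_{\alpha}\rangle$, $\langle T_{1,1}, X_{\beta}\rangle$, $\langle T_{1,1}, X_{\alpha+\beta}\rangle$ was already noted in Lemma \ref{lemma:T11}, using commutativity and the rank of the nilpotent element.

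The chain equivalence $\langle T_{1,-1}, X_{\alpha+2\beta}\rangle \sim \langle T_{1,1}, X_{\alpha}\rangle$ comes for free: apply $A$ to get $\langle T_{1,-1}, X_{\alpha+2\beta}\rangle \sim \langle T_{1,1}, X_{\alpha+2\beta}\rangle$, then invoke the final note in Lemma \ref{lemma:T11} that $\langle T_{1,1}, X_{\alpha+2\beta}\rangle \sim \langle T_{1,1}, X_{\alpha}\rangle$. The step I expect to be the main obstacle is the final claim $\langle T_{1,-1}, X_{\beta} + c X_{\alpha+2\beta}\rangle \sim \langle T_{1,-1}, X_{\beta}\rangle$ for arbitrary $c \in \Cb$. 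Here the key point is that $\beta(T_{1,-1}) = 2 = (\alpha+2\beta)(T_{1,-1})$, so both root vectors lie in a single eigenspace of $\ad(T_{1,-1})$; thus $X_{\beta} + c X_{\alpha+2\beta}$ is still a $\ad(T_{1,-1})$-eigenvector of eigenvalue $2$. I would then produce an explicit symplectic element centralizing $T_{1,-1}$ that moves $X_{\beta} + c X_{\alpha+2\beta}$ to $X_{\beta}$: either a one-parameter conjugation of the type used in Eq.~\eqref{eq:rootconj} (with a suitable root vector chosen from the centralizer of $T_{1,-1}$ in $\algn$ that pairs $X_{\alpha+2\beta}$ with $X_{\beta}$ modulo lower-order terms), or a symplectic block matrix acting on the $1$- and $4$-indexed coordinates and fixing the $\pm 1$ eigenspaces of $T_{1,-1}$ in the obvious block form, analogous to the matrix in Eq.~\eqref{Matrix24rotate} used in the $T_{1,0}$ analysis. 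After dividing out the overall scalar, this conjugation sends the given $2$-dimensional algebra to $\langle T_{1,-1}, X_{\beta}\rangle$, completing the proof.
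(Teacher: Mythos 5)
Your proposal is correct and follows essentially the same route as the paper: reduce to Lemma \ref{lemma:T11}, transfer to $T_{1,-1}$ via the matrices $A$ and $AJ$, and separate these classes from those of Lemma \ref{lemma:T10T01} and Theorem \ref{prop:regSS2} by the eigenvalue pattern $\{c,c,-c,-c\}$ of their semisimple elements. For the final claim, which the paper merely asserts, your first suggested construction does work and is easily completed: $X_{\alpha+\beta}$ spans the centralizer of $T_{1,-1}$ in $\algn$, satisfies $[X_{\alpha+\beta},X_{\beta}]=-2X_{\alpha+2\beta}$ and $[X_{\alpha+\beta},X_{\alpha+2\beta}]=0$, so conjugating by $id+\tfrac{c}{2}X_{\alpha+\beta}\in Sp(4,\Cb)$ fixes $T_{1,-1}$ and sends $X_{\beta}+cX_{\alpha+2\beta}$ to $X_{\beta}$.
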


\subsection{Two-dimensional subalgebras not containing any semisimple elements}

\subsubsection{Two-dimensional algebras containing a nonsemisimple element that is not nilpotent}
Suppose $\alga$ is a two-dimensional subalgebra of $\borel$ which does not contain any 
semisimple elements,  but whose elements are not all nilpotent.  

If $\alga = \langle X, X' \rangle$,  where $X = T+N$,  $X' = T' + N'$,  with $T, T' \in \algt$ and $N,N' \in \algn$,  
then we can assume $T \ne 0$.  If $T'$ is not a scalar multiple of $T$,  then $T$ and $T'$ span $\algt$,  and some linear 
combination of $X, X'$ would have distinct eigenvalues and therefore would be semisimple.  So $T'$ must be a multiple of $T$, 
and $X' $ can be replaced by $X'' = N'' \in \algn$.  Since $\dim(\alga) = 2$,  $[X, X'']$ must be a scalar multiple of $X''$.  

Without loss of generality,  we can assume that $T$ has the form of one of the first four 
classes under the action of $B$ in Table \ref{tab:nonss}. In fact,  we can even assume that the parameter $a$ in these classes 
is equal to $1$.  

For example,  it is clear that $X = T_{1,1}+X_{\beta}$ commutes with $X_{\beta}$,  so 
$\langle T_{1,1}+X_{\beta}, X_{\beta} \rangle = \langle T_{1,1}, X_{\beta} \rangle$ is a commutative two-dimensional algebra 
containing a semisimple element.  
A simple calculation shows that the only other possibility for $X'' = N'' \in \algn$ satisfying $[X, X''] = vX''$, 
for some $v \in \Cb$, is for $X''$ to be a multiple of $X_{\alpha + 2 \beta}$.  
So $\langle T_{1,1}+X_{\beta}, X_{\alpha + 2 \beta} \rangle$ is a noncommutative two-dimensional algebra.

Similarly,  it is clear that $X = T_{1,-1}+X_{\alpha +\beta}$ commutes with $X_{\alpha + \beta}$,  so 
$\langle T_{1,-1}+X_{\alpha + \beta}, X_{\alpha + \beta} \rangle = \langle T_{1,-1}, X_{\alpha + \beta} \rangle$ 
is a commutative two-dimensional algebra containing a semisimple element.  
A simple calculation shows that the only other possibilities for $X'' = N'' \in \algn$ satisfying $[X, X''] = vX''$, 
for some $v \in \Cb$, are for $X''$ to be a multiple of $X_{\alpha}$ or of $X_{\alpha + 2 \beta}$.  
So $\langle T_{1,-1}+X_{\alpha + \beta}, X_{\alpha } \rangle$ 
and $\langle T_{1,-1}+X_{\alpha + \beta}, X_{\alpha + 2 \beta} \rangle$ are noncommutative two-dimensional algebras.  
Note that they are conjugate by the element $W$ of Eq. \eqref{Eq:matrixW}.

We will use the following lemma. 
\begin{lemma}\label{lemma:Aconjugacy}
The $Sp(4, \Cb)$ matrix $A$ in Eq. \eqref{eq:matrixAinSp} conjugates 
\begin{equation}
\begin{pmatrix}
\lambda &1&0&0 \\
0& \lambda &0&0 \\ 
0&0& - \lambda &0 \\
0&0&-1& -\lambda \\
\end{pmatrix}
~\text{into}~ 
\begin{pmatrix}
\lambda &0&0&1 \\
0& - \lambda &1&0 \\ 
0&0& - \lambda &0 \\
0&0&0& \lambda \\
\end{pmatrix},
\end{equation}
  for any $\lambda \in \Cb$.  
\end{lemma}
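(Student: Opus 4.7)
The plan is to reduce the general-$\lambda$ claim to the $\lambda=1$ identity that was already verified in the proof of Theorem \ref{ntjd}, by exploiting uniqueness of the additive Jordan decomposition.

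First, I would rewrite the two matrices in the Lie-theoretic language of the paper. The matrix on the left is $T_{\lambda,\lambda}+X_{\beta}$, and the matrix on the right is $T_{\lambda,-\lambda}+X_{\alpha+\beta}$. Since $\beta(T_{\lambda,\lambda})=\lambda-\lambda=0$ and $(\alpha+\beta)(T_{\lambda,-\lambda})=\lambda+(-\lambda)=0$, in each case the diagonal summand commutes with the nilpotent summand, so each expression is an additive Jordan decomposition (semisimple part plus commuting nilpotent part).

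Next, I would invoke the fact that inner automorphisms preserve the Jordan decomposition and its uniqueness. Under these observations, the claim $A M_{1} A^{-1} = M_{2}$ is equivalent to the pair of identities $A T_{\lambda,\lambda} A^{-1} = T_{\lambda,-\lambda}$ and $A X_{\beta} A^{-1} = X_{\alpha+\beta}$. By linearity in $\lambda$ the first identity reduces to $A T_{1,1} A^{-1} = T_{1,-1}$. But both identities follow from the single equation $A(T_{1,1}+X_{\beta})A^{-1} = T_{1,-1}+X_{\alpha+\beta}$ that was checked in the proof of Theorem \ref{ntjd}: the right-hand side is itself a Jordan decomposition (as just noted), so by uniqueness the semisimple parts and the nilpotent parts match separately.

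As a sanity check, one can also prove the lemma by direct matrix multiplication, computing $A^{-1}$ explicitly from the action $e_{1}\mapsto e_{1},\ e_{2}\mapsto e_{4},\ e_{3}\mapsto e_{3},\ e_{4}\mapsto -e_{2}$ and then multiplying out $A M_{1} A^{-1}$. I do not anticipate any real obstacle to either approach; the only issue is bookkeeping, in particular confirming that the named root vectors commute with the relevant toral elements so that the sums really are Jordan decompositions, which is a matter of reading off the values $\beta(T_{\lambda,\lambda})$ and $(\alpha+\beta)(T_{\lambda,-\lambda})$.
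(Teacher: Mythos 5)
Your proof is correct, but it takes a more structural route than the paper, whose entire proof of this lemma is the phrase ``straightforward calculation,'' i.e.\ exactly the direct matrix multiplication you relegate to a sanity check. Your identifications $M_{1}=T_{\lambda,\lambda}+X_{\beta}$ and $M_{2}=T_{\lambda,-\lambda}+X_{\alpha+\beta}$ are right, the vanishing of $\beta(T_{\lambda,\lambda})=\lambda-\lambda$ and $(\alpha+\beta)(T_{\lambda,-\lambda})=\lambda+(-\lambda)$ does make each sum an additive Jordan decomposition (including the degenerate case $\lambda=0$, where the semisimple part is $0$), and conjugation plus uniqueness of the Jordan decomposition legitimately splits the single identity $A(T_{1,1}+X_{\beta})A^{-1}=T_{1,-1}+X_{\alpha+\beta}$ from the proof of Theorem \ref{ntjd} into the two identities $AT_{1,1}A^{-1}=T_{1,-1}$ and $AX_{\beta}A^{-1}=X_{\alpha+\beta}$, after which linearity in $\lambda$ finishes the job; since Theorem \ref{ntjd} precedes this lemma in the paper, there is no circularity. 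What your approach buys is that the general-$\lambda$ statement is seen to be no more information than the $\lambda=1$ case already on record, and the same reasoning explains at a glance the other root-vector transport facts the paper uses for $A$ and $W$; what the paper's bare computation buys is independence from Theorem \ref{ntjd} and from the (unstated there) fact that conjugation commutes with taking Jordan parts. One small caution: elsewhere the paper asserts that $A$ takes $X_{\beta}$ to $-X_{\alpha+\beta}$, which is a sign discrepancy with the identity you are importing; a direct check confirms $AX_{\beta}A^{-1}=+X_{\alpha+\beta}$, so your version is the consistent one, but it is worth being aware that the two passages of the paper disagree on this sign.
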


\begin{proof}
Straightforward calculation.
\end{proof}

Note too that by Lemma \ref{lemma:Aconjugacy} and an easy calculation, 
the $Sp(4, \Cb)$ matrix $A$ in Eq.~(\ref{eq:matrixAinSp}) conjugates 
$\langle T_{1,1}+X_{\beta}, X_{\alpha + 2 \beta} \rangle$ 
to 
$\langle T_{1,-1}+X_{\alpha + \beta},$ $X_{\alpha + 2 \beta} \rangle$
$\sim$ $\langle T_{1,-1}+X_{\alpha + \beta}, X_{\alpha } \rangle$.   

For $X = T_{1,0} + X_{\alpha}$,  clearly 
$\langle T_{1,0} + X_{\alpha}, X_{\alpha} \rangle = \langle T_{1,0} , X_{\alpha} \rangle$ is a commutative two-dimensional 
algebra containing a semisimple element.  
A simple calculation shows that the only other possibilities for $X'' = N'' \in \algn$ satisfying $[X, X''] = vX''$, 
for some $v \in \Cb$, are for $X''$ to be a multiple of $X_{\alpha+\beta}$ or of $X_{\alpha + 2 \beta}$.  
So $\langle T_{1,0}+X_{\alpha}, X_{\alpha + \beta} \rangle$   
and $\langle T_{1,0}+X_{\alpha}, X_{\alpha + 2 \beta} \rangle$,    are noncommutative two-dimensional algebras.  
Note that they are not conjugate by any element of $Sp(4, \Cb)$ because the elements which 
have eigenvalues $1,-1,0,0$,  i.e.,  $\pm T_{1,0}+N$,  for some $N \in \algn$,  have eigenvalue
$\pm 1$ on  $X_{\alpha + \beta}$ and 
eigenvalue $\pm 2$ on  $X_{\alpha + 2\beta}$.  


For $X = T_{0,1} + X_{\alpha + 2 \beta}$,  clearly 
$\langle T_{0,1} + X_{\alpha + 2 \beta}, X_{\alpha + 2 \beta} \rangle = \langle T_{0,1} , X_{\alpha + 2 \beta} \rangle$ 
is a commutative two-dimensional algebra containing a semisimple element.    
A simple calculation shows that the only other possibilities for $X'' = N'' \in \algn$ satisfying $[X, X''] = vX''$, 
for some $v \in \Cb$, are for $X''$ to be a multiple of $X_{\alpha}$,  $X_{\beta}$,  or $X_{\alpha+\beta}$.  
So $\langle T_{0,1}+X_{\alpha + 2 \beta}, X_{\alpha } \rangle$, 
$\langle T_{0,1}+X_{\alpha + 2 \beta}, X_{ \beta} \rangle$,      
and $\langle T_{0,1}+X_{\alpha + 2 \beta}, X_{\alpha +  \beta} \rangle$ are noncommutative two-dimensional algebras.  
Note that the last two of these are conjugate by the element $A$ of Eq. \eqref{eq:matrixAinSp}.  

However,  
$\langle T_{0,1}+X_{\alpha + 2 \beta}, X_{\alpha } \rangle$ 
and $\langle T_{0,1}+X_{\alpha + 2 \beta}, X_{ \beta} \rangle$     
are not conjugate by any element of $Sp(4, \Cb)$ because the elements which 
have eigenvalues $1,-1,0,0$,  i.e.,  $\pm T_{0,1}+N$,  for some $N \in \algn$,  have eigenvalue
$\pm 2$ on  $X_{\alpha}$   
and eigenvalue $\pm 1$ on  $X_{ \beta}$. 

Moreover,  the element $W$ of Eq. \eqref{Eq:matrixW} conjugates 
$\langle T_{0,1}+X_{\alpha + 2 \beta}, X_{\alpha } \rangle$ 
to
$\langle T_{1,0}+X_{\alpha }, X_{\alpha + 2\beta } \rangle$  
and $ \langle T_{0,1}+X_{\alpha + 2 \beta}, X_{ \alpha + \beta} \rangle 
\sim \langle T_{0,1}+X_{\alpha + 2 \beta}, X_{ \beta} \rangle$     
to
$\langle T_{1,0}+X_{\alpha }, X_{\alpha + \beta } \rangle$.   

Finally,  note that none of these two-dimensional algebras containing a singular matrix $T_{1,0}+X_{\alpha}$
or $T_{0,1} + X_{\alpha + 2 \beta}$,  but no semisimple elements, can be conjugate to any 
of the algebras containing an invertible matrix $T_{1,1}+X_{\beta}$ or 
$T_{1,-1}+X_{\alpha + \beta}$.  

\begin{theorem} \label{lemma:T10}
A two-dimensional subalgebra of $\borel$ which does not contain any 
semisimple elements,  but whose elements are not all nilpotent,  must be 
equivalent to one of the following inequivalent algebras:   
\begin{equation}
\begin{array}{llllll}
 \langle T_{1,1}+X_{\beta}, X_{\alpha + 2 \beta} \rangle, &
 \langle T_{1,0}+X_{\alpha }, X_{\alpha + \beta } \rangle, & 
 \langle T_{1,0}+X_{\alpha }, X_{\alpha + 2\beta } \rangle.   
\end{array}
\end{equation}
\end{theorem}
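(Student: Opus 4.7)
The plan is to reduce an arbitrary such $\alga \subset \borel$ to one of the three stated representatives and then verify pairwise inequivalence. First I would pick a basis $\alga = \langle X, X' \rangle$ with $X = T + N$, $X' = T' + N'$, where $T, T' \in \algt$ and $N, N' \in \algn$. Since $\alga$ contains a non-nilpotent element we may take $T \ne 0$; if $T'$ were linearly independent of $T$, then Lemma \ref{lemmaFullss} would force $\alga$, after conjugation, to contain $\algt$ and hence a semisimple element, contradicting the hypothesis. Thus $T'$ is a scalar multiple of $T$, and we may replace $X'$ by a purely nilpotent $X'' \in \algn$. The condition $\dim \alga = 2$ then forces $[X, X''] = v X''$ for some scalar $v$, so $X''$ must be an eigenvector of $\ad(X)$ inside $\algn$.

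Next I would normalize $X$ using Table \ref{tab:nonss}: up to scaling, $X$ is $B$-conjugate to one of $T_{1,1}+X_\beta$, $T_{1,-1}+X_{\alpha+\beta}$, $T_{1,0}+X_\alpha$, or $T_{0,1}+X_{\alpha+2\beta}$. For each normal form I would list all $X'' \in \algn$ satisfying $[X, X''] = v X''$ by a short direct computation. In every case one solution is $X'' \propto N$, which recovers $\langle T, N \rangle$ and therefore contains the semisimple element $T$; this solution must be discarded, leaving one or two genuinely new candidates per normal form.

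The third step is to identify these candidates pairwise using the conjugating elements already available. Lemma \ref{lemma:Aconjugacy} together with the matrix $A$ of Eq. \eqref{eq:matrixAinSp} conjugates $T_{1,1}+X_\beta$ to $T_{1,-1}+X_{\alpha+\beta}$ and carries the surviving $T_{1,1}$-candidate onto a $T_{1,-1}$-candidate. Similarly, the matrix $W$ of Eq. \eqref{Eq:matrixW} conjugates $T_{0,1}+X_{\alpha+2\beta}$ to $T_{1,0}+X_\alpha$ and sends each $T_{0,1}$-candidate into the $T_{1,0}$-list; within the $T_{1,0}$-list, one checks that the two surviving companion roots $X_{\alpha+\beta}$ and $X_{\alpha+2\beta}$ are not further identified. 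After these identifications exactly the three listed representatives remain.

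Finally I would prove pairwise inequivalence by invariants of conjugation. The representative in the $T_{1,1}$-class has an invertible semisimple part (eigenvalues $\pm 1, \pm 1$), whereas the two $T_{1,0}$-representatives have singular semisimple part (eigenvalues $\pm 1, 0, 0$), so no element of $Sp(4, \Cb)$ can intertwine them. To distinguish the two $T_{1,0}$-subalgebras I would compare the $\ad(\pm T_{1,0})$-eigenvalue on the accompanying nilpotent generator: it equals $\pm 1$ on $X_{\alpha+\beta}$ but $\pm 2$ on $X_{\alpha+2\beta}$, and this invariant is preserved under conjugation because the eigenvalues of the semisimple part are fixed up to sign. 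The main obstacle is the bookkeeping inside step two and three: one must solve $[X, X''] = vX''$ correctly in each of the four normal forms and, when applying $A$ or $W$, verify that the resulting subalgebra actually lies back in $\borel$ rather than accidentally leaving it, so that the claimed equivalences are legitimate within our classification framework.
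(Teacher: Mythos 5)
Your proposal follows the paper's proof essentially step for step: reduce to $\langle X, X'' \rangle$ with $X''$ nilpotent, normalize $X$ to the four nonsemisimple normal forms of Table \ref{tab:nonss}, solve $[X, X''] = vX''$ in each case, identify candidates via the matrices $A$ and $W$, and separate the surviving classes by invertibility of the generator and by the $\ad(\pm T_{1,0})$-eigenvalue on the nilpotent generator. The only bookkeeping slips are that the $T_{0,1}+X_{\alpha+2\beta}$ case actually yields three new candidates ($X_{\alpha}$, $X_{\beta}$, $X_{\alpha+\beta}$), not one or two, and that $W$ takes the $X_{\beta}$ candidate out of $\borel$, so it must first be identified with the $X_{\alpha+\beta}$ candidate via $A$ before applying $W$ --- both being exactly the kind of detail you correctly flag as needing verification.
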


\subsubsection{Two-dimensional algebras whose elements are all nilpotent}
Suppose $\alga \subseteq \g$ is a two-dimensional algebra whose elements are all nilpotent.
If $\alga \subset \nf_{\p}$,  then there are exactly two possible classes,  as listed in Lemma 
\ref{lemma:2dimSubsp.np}: 
\begin{equation}  
\langle X_{\alpha}, X_{\alpha + \beta} \rangle, \quad 
\langle X_{\alpha}, X_{\alpha + 2 \beta} \rangle.
\end{equation}  

Otherwise,  we can assume that $\alga$ is generated by 
$X = X_{\beta} + s X_{\alpha} + t X_{\alpha + \beta}  + u X_{\alpha + 2\beta} $ 
and 
$X'' =  r X_{\alpha} + v X_{\alpha + \beta}  + w X_{\alpha + 2\beta} $. 
A simple calculation shows that the only way that $[X, X'']$ can be a 
scalar multiple of $X''$ is for $X''$ to be a (nonzero) multiple of $X_{\alpha+ 2 \beta}$,  
in which case $\alga$ is commutative.  

Now $\langle X_{\beta} + s X_{\alpha} + t X_{\alpha + \beta}  + u X_{\alpha + 2\beta},$ $X_{\alpha+ 2 \beta} \rangle
\sim  \langle X_{\beta} + s X_{\alpha} + t X_{\alpha + \beta},$ $X_{\alpha+ 2 \beta} \rangle$, 
and $Id + tX_{\alpha}$ conjugates the latter to 
$\langle X_{\beta} + s X_{\alpha}  , X_{\alpha+ 2 \beta} \rangle$.  Then,  if $s \ne 0$ and $z^{2} = s$, ${\rm diag}(1/z, 1/z, z, z)$ 
conjugates this into $\langle X_{\beta} + X_{\alpha}  , X_{\alpha+ 2 \beta} \rangle$.  

On  the other hand,  if $s = 0$,  then $\alga = \langle X_{\beta} , X_{\alpha+ 2 \beta} \rangle$.  
If $A$ and $W$ are the matrices in Eqs.  \eqref{eq:matrixAinSp} and \eqref{Eq:matrixW},  respectively,  then 
$WA$ conjugates 
$\langle X_{\beta}, X_{\alpha+ 2 \beta} \rangle$ to 
$\langle X_{\alpha}, X_{\alpha+  \beta} \rangle$.   

The algebra $\langle X_{\beta} + X_{\alpha}  , X_{\alpha+ 2 \beta} \rangle$ contains elements 
of rank 3,  so it is inequivalent to either of the algebras contained in $\nf_{\p}$,  whose elements 
have rank 2 or less.  

Note that $ \langle X_{\alpha} ,  X_{\alpha + 2 \beta}  \rangle$ contains two lines consisting of elements of rank $1$,
while $  \langle X_{\alpha}, X_{\alpha + \beta} \rangle$ contains only one such line.  Hence they are inequivalent. 
And all the elements in either of them have rank at most $2$, whereas 
$\langle X_{\beta} + X_{\alpha}  , X_{\alpha+ 2 \beta} \rangle $ contains elements of rank 3.  Accordingly, these 
three algebras are pairwise inequivalent.  

We summarize.  

\begin{theorem} \label{2dimNilpp}
Each two-dimensional subalgebra whose elements are all nilpotent is conjugate to 
exactly one of the following inequivalent abelian algebras: 
\begin{equation}
\begin{array}{llllll}
& \langle X_{\alpha}, X_{\alpha + \beta} \rangle, &
& \langle X_{\alpha},  X_{\alpha + 2 \beta}  \rangle,&
& \langle X_{\beta} + X_{\alpha}, X_{\alpha+ 2 \beta} \rangle.
\end{array}
\end{equation}
\end{theorem}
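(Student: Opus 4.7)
The plan is to split the argument according to whether $\alga$ is contained in $\nf_{\p}$ or not. Since every element of $\alga$ is nilpotent, after conjugation we may assume $\alga\subset\nf$. If $\alga\subset\nf_{\p}$ then Lemma~\ref{lemma:2dimSubsp.np} immediately exhibits exactly two classes, with representatives $\langle X_{\alpha},X_{\alpha+\beta}\rangle$ and $\langle X_{\alpha},X_{\alpha+2\beta}\rangle$; both are abelian because $\nf_{\p}$ is abelian.

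Otherwise, $\alga$ must contain an element whose $X_{\beta}$-coefficient is nonzero, which, after scaling, we may take of the form $X=X_{\beta}+sX_{\alpha}+tX_{\alpha+\beta}+uX_{\alpha+2\beta}$. A second generator can then be chosen in $\nf_{\p}$, say $X''=rX_{\alpha}+vX_{\alpha+\beta}+wX_{\alpha+2\beta}$. Because $\nf_{\p}$ is an abelian ideal of $\nf$, the bracket $[X,X'']$ lies in $\nf_{\p}$, and since $\dim\alga=2$, this forces $[X,X'']$ to be a scalar multiple of $X''$. A direct computation with the $\alg$-brackets shows the only solution is $X''$ a nonzero multiple of $X_{\alpha+2\beta}$; in particular $\alga$ is abelian. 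I would then normalize: the $X_{\alpha+2\beta}$-component of $X$ is absorbed into the span, the $X_{\alpha+\beta}$-component is killed by a unipotent conjugation of the form $\mathrm{Id}+zX_{\alpha}$, and finally a suitable diagonal element of $\SptwoC$ scales the $X_{\alpha}$-coefficient $s$ either to $1$ (if $s\ne 0$) or leaves it at $0$. If $s=1$ we arrive at $\langle X_{\alpha}+X_{\beta},X_{\alpha+2\beta}\rangle$; if $s=0$ we land at $\langle X_{\beta},X_{\alpha+2\beta}\rangle$, which the matrix $WA$ (with $W$ from Eq.~\eqref{Eq:matrixW} and $A$ from Eq.~\eqref{eq:matrixAinSp}) conjugates to $\langle X_{\alpha},X_{\alpha+\beta}\rangle$, already on the list.

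For pairwise inequivalence I would use $\SptwoC$-invariant rank data. The element $X_{\alpha}+X_{\beta}$ has rank $3$, whereas every element of either algebra in $\nf_{\p}$ has rank at most $2$, so $\langle X_{\alpha}+X_{\beta},X_{\alpha+2\beta}\rangle$ is inequivalent to the other two. The remaining two are separated by counting the lines of rank-one elements they contain: $\langle X_{\alpha},X_{\alpha+2\beta}\rangle$ contains two such lines, since both root vectors have rank $1$, while $\langle X_{\alpha},X_{\alpha+\beta}\rangle$ contains only one, because $X_{\alpha+\beta}$ has rank $2$ and every combination $aX_{\alpha}+bX_{\alpha+\beta}$ with $b\ne 0$ also has rank $2$.

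The principal obstacle is the bracket calculation in the second case, namely verifying that $[X,X'']\in\Cb X''$ forces $X''$ to be a multiple of $X_{\alpha+2\beta}$, together with the sequence of unipotent and diagonal conjugations that reduce the remaining parameters. These are direct but slightly tedious computations in the $\alg$ structure constants; once they are carried out, the rest of the argument is essentially formal.
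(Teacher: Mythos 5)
Your proposal is correct and follows essentially the same route as the paper's own proof: the same split into $\alga\subset\nf_{\p}$ versus not, the same reduction showing the second generator must be a nonzero multiple of $X_{\alpha+2\beta}$, the same normalizing conjugations (unipotent $\mathrm{Id}+zX_{\alpha}$, a diagonal scaling, and $WA$ for the $s=0$ case), and the same rank-based invariants for pairwise inequivalence. The bracket computation you flag as the main obstacle is precisely the step the paper also dispatches as ``a simple calculation,'' so the level of detail matches as well.
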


\section{Three-dimensional solvable subalgebras of $\soo$}  

In this section, we classify the three-dimensional  solvable subalgebras of $\soo$, dividing them into four
cases:  
Three-dimensional subalgebras containing a Cartan subalgebra (see Lemma \ref{3dimWithCartan});
three-dimensional subalgebras containing a semisimple element but not a Cartan subalgebra (see Theorems \ref{2dimNilp}, \ref{lemma:singularss:dim3} and \ref{lemma:singularss:dim3t});  non-nilpotent three-dimensional solvable algebras containing no semisimple elements (see Theorem \ref{prop:nonSSnonNilpDim3}); and three-dimensional nilpotent subalgebras (see Theorem \ref{prop:NilpDim3}).  Again, without loss of generality, we assume that each solvable subalgebra is in the Borel subalgebra $\borel$. The results are summarized in Table \ref{threedimuu}. We begin with a lemma.

\begin{lemma} \label{3dimWithCartan}
Each three-dimensional solvable algebra containing a Cartan subalgebra is conjugate to 
exactly one of the following inequivalent subalgebras: 
\begin{equation}
\langle \algt, X_{\alpha} \rangle, \quad 
 \langle \algt, X_{\beta} \rangle.  
\end{equation}  
Moreover,  $\langle \algt, X_{\alpha} \rangle 
\sim \langle \algt, X_{\alpha + 2 \beta} \rangle$ 
and 
$\langle \algt, X_{\beta} \rangle \sim \langle \algt, X_{\alpha + \beta} \rangle$.  
\end{lemma}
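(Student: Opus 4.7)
The plan is to use Lemma \ref{lemmaCartan} to reduce to the case $\alga \supseteq \algt$, then enumerate the possibilities for a third generator, and finally sort the four candidates into equivalence classes using the $Sp(4, \Cb)$ matrices $W$ and $A$ already in play.

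First I would apply Lemma \ref{lemmaCartan}: since $\alga$ contains a Cartan subalgebra, after conjugation by an element of $B$ we may assume $\algt \subseteq \alga$. A third generator $X$ may then be chosen to lie in $\algn$ by subtracting off its diagonal component. The key observation is that $\ad(\algt)$ preserves $\alga$ and sends $X$ into $\alga \cap \algn = \Cb X$, so $X$ is a joint eigenvector for $\ad(\algt)$. Because the four roots $\alpha, \beta, \alpha+\beta, \alpha+2\beta$ restrict to pairwise distinct linear functionals on $\algt$, the joint eigenspaces in $\algn$ are precisely the four one-dimensional root spaces, and $X$ must be a nonzero multiple of some $X_\gamma$.

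Next I would collapse the four candidates $\langle \algt, X_\gamma \rangle$ into two equivalence classes using conjugations already computed in the excerpt. The matrix $W$ from Eq.~\eqref{Eq:matrixW} normalizes $\algt$ (since it induces the Weyl-group element swapping $T_{a,b}$ with $T_{b,a}$) and, by Eq.~\eqref{eq:Wconjugacy}, interchanges $X_\alpha$ with $X_{\alpha+2\beta}$; this gives $\langle \algt, X_\alpha \rangle \sim \langle \algt, X_{\alpha+2\beta} \rangle$. Similarly, a direct computation shows that the matrix $A$ from Eq.~\eqref{eq:matrixAinSp} conjugates $T_{a,b}$ to $T_{a,-b}$, so it too normalizes $\algt$, and it sends $X_\beta$ to a nonzero multiple of $X_{\alpha+\beta}$, yielding $\langle \algt, X_\beta \rangle \sim \langle \algt, X_{\alpha+\beta} \rangle$.

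The step I expect to require the most care is showing that the two remaining classes $\langle \algt, X_\alpha \rangle$ and $\langle \algt, X_\beta \rangle$ are genuinely distinct. I would argue via the set of nilpotent elements: any element $T + cX_\gamma$ with $T \in \algt$ nonzero is upper triangular with diagonal $(a,b,-a,-b)$ not identically zero, hence has a nonzero eigenvalue and is not nilpotent. The nilpotent elements in each algebra therefore consist precisely of scalar multiples of the root vector. Since $X_\alpha$ has rank $1$ while $X_\beta$ has rank $2$, and conjugation preserves rank, the two algebras cannot be $Sp(4, \Cb)$-conjugate.
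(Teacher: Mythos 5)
Your proposal is correct and follows essentially the same route as the paper: reduce to $\algt \subseteq \alga \subseteq \borel$, identify the third generator as a root vector, use $W$ and $A$ for the equivalences $\langle \algt, X_{\alpha} \rangle \sim \langle \algt, X_{\alpha+2\beta} \rangle$ and $\langle \algt, X_{\beta} \rangle \sim \langle \algt, X_{\alpha+\beta} \rangle$, and distinguish the two remaining classes by the ranks of their nilpotent elements. You simply spell out in more detail the step the paper leaves implicit, namely why $\alga \cap \algn$ must be a root space.
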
  

\begin{proof}
We can assume that such an algebra is contained in $\borel$ and contains $\algt$.  Then it must contain exactly 
one of the positive root vectors.  

But the matrix $W$ of Eq. \eqref{Eq:matrixW}
conjugates $\langle \algt, X_{\alpha} \rangle$ to 
$\langle \algt, X_{\alpha + 2 \beta} \rangle $, and  
the matrix $A$ of Eq. \eqref{eq:matrixAinSp}  
conjugates $ \langle \algt, X_{\beta} \rangle$ to 
$\langle \algt, X_{\alpha + \beta} \rangle$.  

Since $ \langle \algt, X_{\beta} \rangle$ contains nilpotent elements 
of rank $2$ while the nilpotent elements of 
$\langle \algt, X_{\alpha} \rangle$ 
all have rank $1$,  these two algebras are 
inequivalent.  
\end{proof}

\subsection{Three-dimensional subalgebras containing a semisimple element but not a Cartan subalgebra}

\subsubsection{Regular semisimple elements}\label{subsubsection:regss3d}

Suppose $\alga$ is a three-dimensional solvable subalgebra of $\borel$ which contains a regular 
semisimple element but not $\algt$.  Then we can assume it 
contains $T_{a,1}$,   for some $a \ne 0, \pm 1$,   but not $\algt$.  
By Lemma \ref{lemmaFullss},  we can assume $\alga = \langle T_{a,1}, N, N' \rangle$,  with $N, N' \in \algn$.  

If $a \ne 3$,  the only possibilities are for $N, N'$ to be two commuting positive root vectors:
\begin{equation}
\begin{array}{llllll}
 \langle  T_{a,1} , X_{\alpha}, X_{\alpha+ \beta} \rangle, &
\langle T_{a,1} , X_{\alpha}, X_{\alpha+ 2\beta} \rangle, \\
\langle T_{a,1} , X_{\alpha+ \beta}, X_{\alpha+ 2\beta} \rangle, &
\langle T_{a,1} , X_{ \beta}, X_{\alpha+ 2\beta} \rangle. 
\end{array}
\end{equation}

If $a=3$,  there is the additional possibility $\langle T_{3,1} , X_{\alpha}+ X_{\beta}, X_{\alpha+ 2\beta} \rangle$.
It is inequivalent to any of the others because it contains the nilpotent element 
$X_{\alpha}+ X_{\beta}$, which has rank $3$,  while all the others contain nilpotent 
elements of rank at most $2$.  

The element $W$ of Eq. \eqref{Eq:matrixW} conjugates 
$ \langle  T_{a,1} , X_{\alpha}, X_{\alpha+ \beta} \rangle$ to 
$\langle T_{a^{-1},1},$ $X_{\alpha+ \beta}, X_{\alpha+ 2\beta} \rangle$ 
and
$ \langle  T_{a,1} , X_{\alpha}, X_{\alpha+ 2\beta} \rangle$ to 
$\langle T_{a^{-1},1} , X_{\alpha},$ $X_{\alpha+ 2\beta} \rangle$.
Also,  the matrix $A$ of Eq. \eqref{eq:matrixAinSp}  
conjugates 
$ \langle  T_{a,1} , X_{\beta}, X_{\alpha+ 2 \beta} \rangle$ to 
$ \langle  T_{-a,1} , X_{\alpha + \beta},$ $X_{\alpha+ 2 \beta} \rangle$. 
Accordingly,   any three-dimensional solvable subalgebra containing a regular semisimple element 
but not all of $\algt$ must be equivalent to one of the following:  
\begin{equation}
\begin{array}{llllll}
 \langle  T_{a,1} , X_{\alpha}, X_{\alpha+ \beta} \rangle , \quad a \ne 0, \pm 1  \\
\langle T_{a,1} , X_{\alpha}, X_{\alpha+ 2\beta} \rangle , \quad a \ne 0, \pm 1   \\
\langle T_{3,1} , X_{\alpha}+ X_{\beta}, X_{\alpha+ 2\beta} \rangle.   
\end{array}
\end{equation}

However,  we do know that for $a \ne 0, \pm 1$,  
$ \langle  T_{a,1} , X_{\alpha}, X_{\alpha+ 2\beta} \rangle$ 
$\sim$  $\langle  T_{a^{{-1}},1},$ $X_{\alpha}, X_{\alpha+ 2 \beta} \rangle$.
Otherwise,  we claim that these algebras are inequivalent.  Indeed,  
by consideration of the ranks of the nilpotent elements,  we have seen that the 
algebra in the last line is inequivalent to any of the others. 
Moreover,  those in the first line contain nilpotent elements of rank $2$ except 
for those on the single line spanned by $X_{\alpha}$, while  
those in the second line contain nilpotent elements of rank $2$ except 
for those on the two lines spanned by $X_{\alpha}$ and by $X_{{\alpha + 2 \beta}}$.  
None of the algebras on the first line can be equivalent to any of the algebras 
on the second,  even for different values of $a \ne 0, \pm 1$. 

So consider $\alga =  \langle  T_{a,1} , X_{\alpha}, X_{\alpha+ \beta} \rangle $, with $a \ne 0, \pm 1$.   
The only other elements of $\algt$ with the same eigenvalues as $T_{a,1}$ are $- T_{ a,1}$, $\pm T_{- a,1}$, $\pm T_{1, \pm a}$.  
So we can assume that if $g \in Sp(4, \Cb)$ conjugates $\alga$ to 
an algebra $ \langle  T_{b,1} , X_{\alpha}, X_{\alpha+ \beta} \rangle $,  
other than 
$\alga$ itself, then it conjugates $T_{a,1}$ to one of these seven matrices.  
The vectors $X_{\alpha}$ and $X_{\alpha+ \beta}$ are eigenvectors of $\ad(T_{a,1})$ with 
eigenvalues $2$ and $a+1$,  respectively.  The element $g \in Sp(4, \Cb)$ must conjugate eigenvectors 
to eigenvectors.  But since they have different ranks,  it must conjugate 
each to a multiple of itself.

The only one of the seven matrices above whose adjoint has eigenvalue $2$ on $X_{\alpha}$ is 
$T_{-a,1}$,  and its eigenvalue on $X_{\alpha+ \beta}$ is $1-a$,  which cannot equal $1+a$, 
since $a \ne 0$.  We conclude that the algebras  $\langle  T_{a,1} , X_{\alpha}, X_{\alpha+ \beta} \rangle $, 
with $a \ne 0, \pm 1$,  are pairwise inequivalent.    

Now consider $\alga =  \langle  T_{a,1} , X_{\alpha}, X_{\alpha+ 2 \beta} \rangle $, with $a \ne 0, \pm 1$.   
The vectors $X_{\alpha}$ and $X_{\alpha+ 2 \beta} \rangle $ are eigenvectors of $\ad ( T_{a,1})$ 
with eigenvalues $2, 2a$,  respectively.  The only one of the seven conjugates of $T_{a,1}$ in 
$\algt$ with the same adjoint eigenvalues is $T_{1,a}$,  corresponding to the equivalence we have 
already seen arising from conjugation by $W$.    

\begin{theorem} \label{2dimNilp}
Each three-dimensional solvable subalgebra which contains a regular semisimple element but not a 
Cartan subalgebra is conjugate to one of the following: 
\begin{equation}
\begin{array}{llllll}
\langle  T_{a,1} , X_{\alpha}, X_{\alpha+ \beta} \rangle,  \quad a \ne 0, \pm 1,  \\
\langle T_{a,1} , X_{\alpha}, X_{\alpha+ 2\beta} \rangle \sim \langle T_{a^{-1},1} , X_{\alpha}, X_{\alpha+ 2\beta} \rangle, \quad a \ne 0, \pm 1,  \\
\langle T_{3,1} , X_{\alpha}+ X_{\beta},  X_{\alpha+ 2\beta} \rangle. 
\end{array}
\end{equation} 
Apart from the equivalences noted,  the algebras listed above are pairwise inequivalent.  
\end{theorem}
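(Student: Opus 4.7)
The plan is to reduce to $\alga \subset \borel$ and use Lemma~\ref{lemmaFullss} to normalize the semisimple part. Since $\alga$ contains a regular semisimple element but not $\algt$, after conjugation by $B$ we may assume $\alga$ contains $T_{a,1}$ with $a\ne 0,\pm 1$, and that $\alga = \langle T_{a,1}, N, N'\rangle$ with $N,N' \in \algn$ (any second element can have its diagonal piece killed, since otherwise $\algt\subseteq \alga$). Because $\alga$ is closed under $\ad(T_{a,1})$ and $T_{a,1}$ is regular on $\algn$, each of $N,N'$ is a sum of eigenvectors of $\ad(T_{a,1})|_\algn$, and for $\langle N,N'\rangle$ to be two-dimensional and satisfy $[N,N'] \in \alga$, the pair $N,N'$ must span a two-dimensional subalgebra of $\algn$ whose non-zero elements are eigenvectors of $\ad(T_{a,1})$.

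Next I would enumerate these possibilities. If $a \ne 3$, the four restricted eigenvalues $2, a-1, a+1, 2a$ of $\ad(T_{a,1})$ on $X_\alpha, X_\beta, X_{\alpha+\beta}, X_{\alpha+2\beta}$ are distinct, so $N,N'$ are (scalar multiples of) two root vectors that commute; the commuting pairs are $(X_\alpha,X_{\alpha+\beta})$, $(X_\alpha,X_{\alpha+2\beta})$, $(X_{\alpha+\beta},X_{\alpha+2\beta})$, and $(X_\beta,X_{\alpha+2\beta})$. If $a = 3$, there is the extra possibility that $N$ is a non-trivial combination of $X_\alpha$ and $X_\beta$ (which share eigenvalue $4$); by Lemma~\ref{Lemma:T31}, such a combination is conjugate by a diagonal matrix to $X_\alpha + X_\beta$, and the only commuting partner in $\algn$ is $X_{\alpha+2\beta}$, giving the third family.

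I would then collapse the list using the Weyl-style conjugators. The matrix $W$ of Eq.~\eqref{Eq:matrixW} takes $T_{a,1}\mapsto T_{1,a}=T_{a^{-1},1}$ (up to rescaling) and permutes root vectors via Eq.~\eqref{eq:Wconjugacy}, yielding
\begin{equation*}
\langle T_{a,1}, X_\alpha, X_{\alpha+\beta}\rangle \sim \langle T_{a^{-1},1}, X_{\alpha+\beta}, X_{\alpha+2\beta}\rangle,
\qquad
\langle T_{a,1}, X_\alpha, X_{\alpha+2\beta}\rangle \sim \langle T_{a^{-1},1}, X_\alpha, X_{\alpha+2\beta}\rangle,
\end{equation*}
and the matrix $A$ of Eq.~\eqref{eq:matrixAinSp} gives $\langle T_{a,1}, X_\beta, X_{\alpha+2\beta}\rangle \sim \langle T_{-a,1}, X_{\alpha+\beta}, X_{\alpha+2\beta}\rangle$, so every algebra is conjugate to one of the three listed families.

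Finally I would prove inequivalence by combining two invariants. The $T_{3,1}$-family contains a nilpotent element of rank $3$ (namely $X_\alpha+X_\beta$), while all nilpotents in the other two families have rank $\le 2$, separating it from the rest. Within the remaining families, the set of nilpotent lines of rank $1$ distinguishes them: the first family has one such line ($\langle X_\alpha\rangle$) while the second has two ($\langle X_\alpha\rangle$ and $\langle X_{\alpha+2\beta}\rangle$). The main work is to show that no further collapse occurs within each family for different values of $a$; here I use that any equivalence must take $T_{a,1}$ to an element of $\algt$ with the same eigenvalues, hence to one of $\pm T_{\pm a,1}, \pm T_{1,\pm a}$, and it must send eigenvectors of $\ad(T_{a,1})$ to eigenvectors with the same adjoint eigenvalue. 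A direct check of the possible adjoint eigenvalues on $X_\alpha$ and $X_{\alpha+\beta}$ (respectively $X_\alpha$ and $X_{\alpha+2\beta}$) forces the equivalence to either be the identity on $T_{a,1}$ or the $W$-conjugacy already recorded; this eigenvalue comparison is the main technical step, and it completes the proof.
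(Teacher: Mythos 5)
Your proposal is correct and follows essentially the same route as the paper: reduce to $\borel$, normalize via Lemma \ref{lemmaFullss} so the nilpotent part is an $\ad(T_{a,1})$-invariant two-dimensional subalgebra of $\algn$, enumerate the commuting root-vector pairs (with the extra $X_{\alpha}+X_{\beta}$ case at $a=3$ via Lemma \ref{Lemma:T31}), collapse using $W$ and $A$, and separate the families by nilpotent ranks and the count of rank-one lines, finishing with the same adjoint-eigenvalue comparison against the seven possible images of $T_{a,1}$ in $\algt$. No substantive differences from the paper's argument.
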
  

\subsubsection{Singular semisimple elements}

Suppose $\alga$ is a three-dimensional solvable subalgebra of $\borel$ which contains 
a singular semisimple element but not a Cartan subalgebra.  
We can assume $\alga$ contains one of 
$T_{1,1}$,  $T_{1,-1}$, $T_{1,0}$, or $T_{0,1}$, but not $\algt$.    

\paragraph{$T_{0,1}$}\label{subsubT01dim3}

Suppose a three-dimensional solvable subalgebra $\alga \subset \borel$ contains 
$T_{0,1}$ but not $\algt$. The eigenvectors of $\ad(T_{0,1} )$ in $\algn$ are 
scalar multiples of the root vectors $X_{\alpha}$, $X_{\beta}$, $X_{\alpha + \beta}$, or $X_{\alpha + 2 \beta}$, 
which have the distinct eigenvalues $2, -1, 1$,  and $0$,  respectively.  The only possible three-dimensional algebras  
are 
$\langle  T_{0,1}, X_{\alpha}, X_{\alpha + \beta} \rangle$, 
$\langle  T_{0,1}, X_{\alpha}, X_{\alpha + 2\beta} \rangle$, 
$\langle  T_{0,1}, X_{\alpha + \beta}, X_{\alpha + 2\beta} \rangle$,  and
$\langle  T_{0,1}, X_{\beta}, X_{\alpha + 2\beta} \rangle$. 

If $A$ is the matrix defined in Eq.  \eqref{eq:matrixAinSp}, 
then $A$ conjugates $T_{0,1}$ to $-T_{0,1}$,  $X_{\beta}$ to $X_{\alpha + \beta}$,   
$X_{\alpha + \beta}$ to $-X_{\beta}$, and $X_{\alpha + 2\beta}$ to itself.  This shows that 
$\langle  T_{0,1}, X_{\alpha + \beta}, X_{\alpha + 2\beta} \rangle$ is equivalent to 
$\langle  T_{0,1}, X_{\beta}, X_{\alpha + 2\beta} \rangle$.  

A comparison of the eigenvalues of $\ad (T_{0,1})$ shows that the three 
remaining algebras are inequivalent.  

We summarize in the following lemma.

\begin{lemma}\label{lemma:T01:dim3}
Every solvable three-dimensional subalgebra of  $\borel$ which contains $T_{0,1}$ but not $\algt$ is 
equivalent to one of the following inequivalent algebras:  
\begin{equation}
\begin{array}{llllll}
\langle  T_{0,1}, X_{\alpha}, X_{\alpha + \beta} \rangle,  &
\langle  T_{0,1}, X_{\alpha}, X_{\alpha + 2\beta} \rangle, &
\langle  T_{0,1}, X_{\alpha + \beta}, X_{\alpha + 2\beta} \rangle. 
\end{array}
\end{equation}
Also,  
$\langle  T_{0,1}, X_{\alpha + \beta}, X_{\alpha + 2\beta} \rangle
\sim \langle  T_{0,1}, X_{\beta}, X_{\alpha + 2\beta} \rangle$.  
\end{lemma}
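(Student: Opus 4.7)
The plan is first to reduce $\alga$ to a tractable canonical form, then enumerate the candidates by exploiting the root-space decomposition under $\ad(T_{0,1})$, and finally sort out the $\SptwoC$-equivalences and inequivalences among them.

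To begin, I would invoke Lemma \ref{lemmaFullss}: if $\alga$ contained an element $T+N$ with $T\in\algt$ not a scalar multiple of $T_{0,1}$, then applying the lemma to this element together with $T_{0,1}$ would produce a $B$-conjugate of $\alga$ containing $\algt$, contrary to hypothesis. Hence every element of $\alga$ has diagonal part in $\Cb\cdot T_{0,1}$, and after subtracting appropriate scalar multiples of $T_{0,1}$ from the additional generators I may write $\alga=\langle T_{0,1},N_1,N_2\rangle$ with linearly independent $N_1,N_2\in\algn$. The plane $U:=\langle N_1,N_2\rangle\subset\algn$ is $\ad(T_{0,1})$-stable, and as recorded in Subsection \ref{subsubT01dim2}, $\ad(T_{0,1})$ acts diagonally on $\algn$ in the basis of root vectors with four distinct eigenvalues $2,-1,1,0$ on $X_\alpha,X_\beta,X_{\alpha+\beta},X_{\alpha+2\beta}$ respectively. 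Therefore $U$ must be the span of two positive root vectors $X_\gamma,X_\delta$, and for closure of $\alga$ under the bracket I need $[X_\gamma,X_\delta]\in U$; since this bracket is either zero or a nonzero scalar multiple of a third positive root vector, the only viable option is that $X_\gamma$ and $X_\delta$ commute. A quick check against the $C_2$ root system shows that the commuting pairs of distinct positive roots are exactly $\{\alpha,\alpha+\beta\}$, $\{\alpha,\alpha+2\beta\}$, $\{\beta,\alpha+2\beta\}$, and $\{\alpha+\beta,\alpha+2\beta\}$, giving four candidate algebras.

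The equivalence $\langle T_{0,1},X_{\alpha+\beta},X_{\alpha+2\beta}\rangle\sim\langle T_{0,1},X_\beta,X_{\alpha+2\beta}\rangle$ then follows immediately by conjugating with the matrix $A$ of Eq. \eqref{eq:matrixAinSp}, which sends $T_{0,1}$ to $-T_{0,1}$, fixes $X_{\alpha+2\beta}$, and swaps $X_\beta$ with $\pm X_{\alpha+\beta}$. It remains to confirm that the three representatives listed in the statement are pairwise $\SptwoC$-inequivalent. The derived subalgebra of $\langle T_{0,1},X_\alpha,X_{\alpha+\beta}\rangle$ is the two-dimensional space $\langle X_\alpha,X_{\alpha+\beta}\rangle$, while the derived subalgebras of the other two algebras are one-dimensional; hence the first algebra is not even abstractly isomorphic to either of the other two. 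The main obstacle is distinguishing the two remaining algebras $\langle T_{0,1},X_\alpha,X_{\alpha+2\beta}\rangle$ and $\langle T_{0,1},X_{\alpha+\beta},X_{\alpha+2\beta}\rangle$, which are abstractly isomorphic as Lie algebras. I would separate them via the $\SptwoC$-invariant given by the rank (equivalently, the Jordan type drawn from Table \ref{tab:nonss}) of the unique-up-to-scale nonzero element of the derived subalgebra: this element is $X_\alpha$ (rank $1$, Jordan type $(2,1,1)$) in the first case and $X_{\alpha+\beta}$ (rank $2$, Jordan type $(2,2)$) in the second, so no $\SptwoC$-conjugation can carry one algebra to the other.
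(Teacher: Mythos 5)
Your proof is correct and follows essentially the same route as the paper: decompose $\algn$ into the four $\ad(T_{0,1})$-eigenlines with distinct eigenvalues, conclude that the nilpotent part of $\alga$ is spanned by two commuting positive root vectors (yielding the same four candidates), and use the matrix $A$ of Eq.~\eqref{eq:matrixAinSp} to identify $\langle T_{0,1}, X_{\alpha+\beta}, X_{\alpha+2\beta}\rangle$ with $\langle T_{0,1}, X_{\beta}, X_{\alpha+2\beta}\rangle$. The only cosmetic difference is in the final inequivalence step, where you use the dimension of the derived subalgebra and the rank of its nonzero elements while the paper compares the eigenvalues of $\ad(T_{0,1})$ on the nilpotent part; both are valid $\SptwoC$-invariants, and your explicit appeal to Lemma~\ref{lemmaFullss} to rule out extra diagonal parts is a correct spelling-out of a reduction the paper leaves implicit here.
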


\paragraph{$T_{1,0}$}
Suppose a three-dimensional solvable subalgebra $\alga \subset \borel$ contains 
$T_{1,0}$ but not $\algt$. From the eigenvalues and eigenvectors of $\ad(T_{1,0} )$ in $\algn$ 
as described in \S\ref{subsubT10dim2}, we find that 
the only possible three-dimensional algebras  
are 
$\langle  T_{1,0}, X_{\alpha}, X_{\alpha + \beta} \rangle$,  
$\langle  T_{1,0}, X_{\alpha}, X_{\alpha + 2 \beta} \rangle$, and
$\langle  T_{1,0}, aX_{\beta} + bX_{\alpha + \beta}, X_{\alpha + 2\beta} \rangle$.
\medskip  

The matrix defined in Eq. \eqref{Matrix24rotate} centralizes $T_{1,0}$   
and conjugates $X_{\beta}$ to an arbitrary linear combination of $X_{\beta}$
and $X_{\alpha+ \beta}$. This shows that 
$\langle  T_{1,0}, aX_{\beta} + bX_{\alpha + \beta}, X_{\alpha + 2\beta} \rangle 
\sim \langle  T_{1,0}, X_{\beta} , X_{\alpha + 2\beta} \rangle$,  for any $a,b \in \Cb$, 
not both zero. 

A comparison of the eigenvalues of $\ad (T_{1,0})$ shows that there are no 
additional equivalences.  
 
We summarize in the following lemma.

\begin{lemma}\label{lemma:T10:dim3}
Every solvable three-dimensional subalgebra of $\borel$ which contains $T_{1,0}$ but not $\algt$ is 
equivalent to one of the following inequivalent algebras:  
\begin{equation}
\begin{array}{llllll}
\langle  T_{1,0}, X_{\alpha}, X_{\alpha + \beta} \rangle,  &
\langle  T_{1,0}, X_{\alpha}, X_{\alpha + 2 \beta} \rangle, &
\langle  T_{1,0}, X_{\alpha + \beta}, X_{\alpha + 2\beta} \rangle. 
\end{array}
\end{equation}
Also,  for any $a,b \in \Cb$, not both zero,  
$\langle  T_{1,0}, aX_{\beta} + bX_{\alpha + \beta}, X_{\alpha + 2\beta} \rangle 
\sim \langle  T_{1,0}, X_{\beta}, X_{\alpha + 2\beta} \rangle 
\sim \langle  T_{1,0}, X_{\alpha + \beta}, X_{\alpha + 2\beta} \rangle
$.  
\end{lemma}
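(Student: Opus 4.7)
The plan is to adapt the strategy of Lemma \ref{lemma:T01:dim3}, while accounting for the fact that $\ad(T_{1,0})$ has a genuinely two-dimensional eigenspace $\langle X_\beta, X_{\alpha+\beta}\rangle$ in $\algn$. First, by Lemma \ref{lemmaFullss} together with the hypothesis $\algt \not\subset \alga$, every element of $\alga$ must have $\algt$-component proportional to $T_{1,0}$; after subtracting suitable multiples of $T_{1,0}$ from a basis, I may write $\alga = \langle T_{1,0}, N_1, N_2 \rangle$ with $N_1, N_2 \in \algn$. Since $[T_{1,0}, \algn] \subset \algn$, the plane $V := \langle N_1, N_2 \rangle$ must be $\ad(T_{1,0})$-invariant.

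From \S\ref{subsubT10dim2}, $\ad(T_{1,0})$ acts diagonally on $\algn$ with eigenspaces $\Cb X_\alpha$, $\langle X_\beta, X_{\alpha+\beta} \rangle$, and $\Cb X_{\alpha+2\beta}$ of eigenvalues $0, 1, 2$. The two-dimensional invariant subspaces $V$ therefore fall into four types: (a) $\langle X_\alpha, X_{\alpha+2\beta}\rangle$; (b) the full one-eigenspace $\langle X_\beta, X_{\alpha+\beta}\rangle$; (c) $\langle X_\alpha, aX_\beta+bX_{\alpha+\beta}\rangle$ with $(a,b) \ne (0,0)$; and (d) $\langle X_{\alpha+2\beta}, aX_\beta+bX_{\alpha+\beta}\rangle$ with $(a,b) \ne (0,0)$.

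Next, I would impose the remaining closure condition $[V,V] \subset \alga$. A brief matrix calculation yields the relevant structure constants of $\algn$: $X_\alpha$ commutes with $X_{\alpha+\beta}$ and $X_{\alpha+2\beta}$, $X_{\alpha+\beta}$ commutes with $X_{\alpha+2\beta}$, and $X_\beta$ commutes with $X_{\alpha+2\beta}$; whereas $[X_\alpha, X_\beta]$ and $[X_\beta, X_{\alpha+\beta}]$ are nonzero multiples of $X_{\alpha+\beta}$ and $X_{\alpha+2\beta}$, respectively. With these in hand, case (b) is ruled out (the bracket lands outside $V$), case (c) forces $a = 0$ and reduces to $\langle T_{1,0}, X_\alpha, X_{\alpha+\beta}\rangle$, and cases (a) and (d) are always closed, yielding $\langle T_{1,0}, X_\alpha, X_{\alpha+2\beta}\rangle$ and the one-parameter family $\langle T_{1,0}, aX_\beta+bX_{\alpha+\beta}, X_{\alpha+2\beta}\rangle$.

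Finally, I would reduce family (d) to a single conjugacy class using the $SL(2, \Cb)$-subgroup of $Sp(4, \Cb)$ displayed in Eq.~\eqref{Matrix24rotate}: as already observed in \S\ref{subsubT10dim2}, it centralizes $T_{1,0}$ and acts transitively on the nonzero vectors of $\langle X_\beta, X_{\alpha+\beta}\rangle$, and a direct check confirms that it also fixes $X_{\alpha+2\beta}$. This delivers the equivalences asserted within (d). Inequivalence of the three resulting representatives is then immediate from the conjugation-invariant dimension of the derived subalgebra, which equals $0, 1, 2$, respectively. The main obstacle is case (c): one must pin down exactly why the bracket $[X_\alpha, aX_\beta+bX_{\alpha+\beta}]$, being a nonzero multiple of $aX_{\alpha+\beta}$, forces $a = 0$ rather than allowing a one-parameter family as in (d).
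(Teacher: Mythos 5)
Your enumeration of the $\ad(T_{1,0})$-invariant planes in $\algn$, the closure analysis that eliminates case (b) and forces $a=0$ in case (c), and the use of the matrix of Eq.~\eqref{Matrix24rotate} to collapse family (d) are all correct and amount to a more detailed version of what the paper does. (The ``obstacle'' you flag in case (c) is not really one: if $a \ne 0$ then $[X_{\alpha}, aX_{\beta}+bX_{\alpha+\beta}] = -aX_{\alpha+\beta}$ would have to lie in $\langle T_{1,0}, X_{\alpha}, aX_{\beta}+bX_{\alpha+\beta}\rangle$, and a one-line linear-independence check shows $X_{\alpha+\beta}$ is not in that span, so $a=0$.)

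The genuine gap is your final inequivalence step. The derived subalgebras of the three representatives are $\langle X_{\alpha+\beta}\rangle$, $\langle X_{\alpha+2\beta}\rangle$, and $\langle X_{\alpha+\beta}, X_{\alpha+2\beta}\rangle$, of dimensions $1,1,2$ --- not $0,1,2$ (none of the three algebras is abelian, since $[T_{1,0},X_{\alpha+\beta}]=X_{\alpha+\beta}$ and $[T_{1,0},X_{\alpha+2\beta}]=2X_{\alpha+2\beta}$). So this invariant separates the third algebra from the first two but says nothing about the first pair; worse, no abstract isomorphism invariant can work there, because $\langle T_{1,0}, X_{\alpha}, X_{\alpha+\beta}\rangle$ and $\langle T_{1,0}, X_{\alpha}, X_{\alpha+2\beta}\rangle$ are isomorphic as abstract Lie algebras (both are $L^{3}_{0}$, as the paper's own Section~\ref{isomorphs} and Table~\ref{threedimuu} record). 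You must use an invariant of the conjugation action: the paper compares eigenvalues of $\ad$ of the semisimple elements --- every semisimple element of either algebra is conjugate to $cT_{1,0}$, matching the eigenvalues $1,0,-1,0$ forces $c=\pm 1$, and $\ad(\pm T_{1,0})$ acts on the nilpotent part with eigenvalue multiset $\{0,1\}$ in the first algebra but $\{0,\pm 2\}$ in the second. (Alternatively, the derived subalgebra of the first is spanned by the rank-$2$ nilpotent $X_{\alpha+\beta}$ and that of the second by the rank-$1$ nilpotent $X_{\alpha+2\beta}$, and rank is preserved by conjugation.) Without some such argument the pairwise inequivalence claim is unproved.
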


\begin{theorem}\label{lemma:singularss:dim3}
Every solvable three-dimensional subalgebra of  $\borel$ which contains a semisimple element 
which has zero as an eigenvalue but does not contain all of $\algt$ is 
equivalent to one of the following inequivalent algebras:  
\begin{equation}
\begin{array}{llllll} 
\langle  T_{1,0}, X_{\alpha}, X_{\alpha + \beta} \rangle,  & 
\langle  T_{1,0}, X_{\alpha}, X_{\alpha + 2 \beta} \rangle, &
\langle  T_{1,0}, X_{\alpha + \beta}, X_{\alpha + 2\beta} \rangle. 
\end{array}
\end{equation}
\end{theorem}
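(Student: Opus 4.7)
The plan is to combine Lemmas \ref{lemma:T01:dim3} and \ref{lemma:T10:dim3}, use the element $W$ of Eq.~\eqref{Eq:matrixW} to identify the two resulting lists, and then establish pairwise inequivalence of the three remaining algebras.

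First I would show that, up to conjugation by $B$, the hypothesis forces $\alga$ to contain either $T_{1,0}$ or $T_{0,1}$. By Lemma \ref{lemma:ssinborel} one distinguished semisimple element of $\alga$ may be placed in $\algt$; by Lemma \ref{lemmapairs} any nonzero element of $\algt$ with $0$ as an eigenvalue is a nonzero scalar multiple of $T_{1,0}$ or $T_{0,1}$, and rescaling the generator does not change the subalgebra. Since $\algt \not\subseteq \alga$, exactly one of the two can be present, so Lemmas \ref{lemma:T10:dim3} and \ref{lemma:T01:dim3} reduce the problem to at most six candidates.

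Next I would use the conjugations of Eq.~\eqref{eq:Wconjugacy}, namely $W T_{0,1} W^{-1} = T_{1,0}$, $W X_\alpha W^{-1} = X_{\alpha+2\beta}$, $W X_{\alpha+2\beta} W^{-1} = X_\alpha$, and $W X_{\alpha+\beta} W^{-1} = -X_{\alpha+\beta}$, to show that the three algebras of Lemma \ref{lemma:T01:dim3} are carried under $\mathrm{Ad}(W)$ respectively onto $\langle T_{1,0}, X_{\alpha+\beta}, X_{\alpha+2\beta}\rangle$, $\langle T_{1,0}, X_\alpha, X_{\alpha+2\beta}\rangle$, and $\langle T_{1,0}, X_\alpha, X_{\alpha+\beta}\rangle$, i.e., onto the three algebras of Lemma \ref{lemma:T10:dim3}. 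Every candidate is therefore equivalent to one of the three algebras listed in the statement.

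Finally I would prove pairwise inequivalence by comparing the spectra of $\ad(T_{1,0})$ on the nilradicals, which are $\{0,1\}$, $\{0,2\}$, and $\{1,2\}$, respectively. Each nilradical is abelian, since in each pair of relevant positive roots no sum is a root. If $g \in Sp(4, \Cb)$ conjugated one of the algebras onto another, then $S := g T_{1,0} g^{-1}$ would be a semisimple element of the target with eigenvalue multiset $\{1,0,-1,0\}$; writing $S = \mu T_{1,0} + N$ with $N$ in the nilradical, the diagonal entries of $S \in \borel$ force $\mu = \pm 1$, and since the nilradical is abelian, $\ad(S) = \mu\,\ad(T_{1,0})$ on it. Hence the multisets on the two nilradicals would have to agree up to an overall sign, which they do not. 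The main obstacle in this final step is that the algebras $\langle T_{1,0}, X_\alpha, X_{\alpha+\beta}\rangle$ and $\langle T_{1,0}, X_\alpha, X_{\alpha+2\beta}\rangle$ are abstractly isomorphic as Lie algebras, so intrinsic invariants (center, derived subalgebra, etc.) do not separate them; the adjoint-spectrum argument exploits the specific embedding into $\alg$ to do so.
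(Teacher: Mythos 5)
Your proof is correct and follows essentially the same route as the paper: the paper's own proof simply invokes Lemmas \ref{lemma:T01:dim3} and \ref{lemma:T10:dim3} and observes that $W$ carries the $T_{0,1}$-list onto the $T_{1,0}$-list, with pairwise inequivalence already asserted in Lemma \ref{lemma:T10:dim3} by ``comparison of the eigenvalues of $\ad(T_{1,0})$.'' Your final paragraph just makes that eigenvalue comparison rigorous (pinning down the scalar $\mu=\pm 1$ and using that the two-dimensional nilradicals are abelian so that $\ad(S)$ restricts to $\mu\,\ad(T_{1,0})$), which is a welcome elaboration rather than a genuinely different argument.
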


\begin{proof}
The element $W$ defined in Eq. \eqref{Eq:matrixW} 
takes $T_{0,1}$ to $T_{1,0}$, 
$X_{\alpha+\beta}$ to $-X_{\alpha+\beta}$,  
$X_{\alpha}$ to $-X_{\alpha+2\beta}$,  and
$X_{\alpha+2\beta}$ to $-X_{\alpha}$.  The result follows from Lemmas
\ref{lemma:T01:dim3} and \ref{lemma:T10:dim3}.    
\end{proof}

\paragraph{$T_{1,-1}$}	From the eigenvalues and eigenvectors of $\ad(T_{1,-1} )$ in $\algn$ 
as described in \S\ref{subsubT1-1dim2}, we find that 
the three-dimensional subalgebras of $\borel$ which contain $T_{1,-1}$ but not $\algt$ 
are:
\begin{equation}
\begin{array}{llllll}
\langle T_{1,-1}, X_{\alpha},  X_{\alpha + \beta} \rangle, \\
\langle T_{1,-1}, X_{\alpha},  X_{\alpha + 2\beta} \rangle, \\
\langle T_{1,-1}, X_{\alpha + \beta},  X_{\alpha + 2\beta} \rangle,  \\
\langle T_{1,-1}, X_{\beta},  X_{\alpha + 2\beta} \rangle. 
\end{array}
\end{equation}

The element $W \in Sp(4, \Cb)$ conjugates 
$\langle T_{1,-1} , X_{\alpha},  X_{\alpha + \beta} \rangle $
to
$\langle T_{1,-1},$ $X_{\alpha + \beta},  X_{\alpha + 2\beta} \rangle$,  and 
comparison of the eigenvalues of $\ad(T_{1,-1})$ shows that this is 
the only equivalence.  

\begin{lemma}\label{lemma:T1-1:dim3}
Any three-dimensional solvable subalgebra of $\borel$ which contains $T_{1,-1}$ but 
not $\algt$ is equivalent to one of the following inequivalent algebras:  
\begin{equation}
\begin{array}{llllll}
\langle T_{1,-1}, X_{\alpha  + \beta},  X_{\alpha + 2 \beta} \rangle, \\
\langle T_{1,-1}, X_{\alpha},  X_{\alpha + 2\beta} \rangle, \\
\langle T_{1,-1}, X_{\beta},  X_{\alpha + 2\beta} \rangle. 
\end{array}
\end{equation}
Moreover, 
$\langle T_{1,-1}, X_{\alpha + \beta},  X_{\alpha + 2\beta} \rangle 
\sim \langle T_{1,-1}, X_{\alpha},  X_{\alpha + \beta} \rangle$. 
\end{lemma}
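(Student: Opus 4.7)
The plan is to parallel the strategy used for Lemmas \ref{lemma:T01:dim3} and \ref{lemma:T10:dim3}. Since $\alga \subset \borel$ contains $T_{1,-1}$ but not $\algt$, Lemma \ref{lemmaFullss} shows that every element of $\alga$ has $\algt$-part proportional to $T_{1,-1}$, so $\alga = \mathbb{C}\, T_{1,-1} \oplus V$ with $V \subset \algn$ a two-dimensional subspace. For $\alga$ to be a subalgebra, $V$ must be both $\ad(T_{1,-1})$-invariant and closed under the bracket in $\algn$.

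First, I would record the eigenvalues of $\ad(T_{1,-1})$ on the positive root vectors: $-2$ on $X_{\alpha}$, $0$ on $X_{\alpha+\beta}$, and $2$ on both $X_{\beta}$ and $X_{\alpha+2\beta}$. Since $\ad(T_{1,-1})$ is semisimple on $\algn$, the $\ad(T_{1,-1})$-invariant $2$-dimensional subspaces $V$ decompose as $V = (V \cap V_{-2}) \oplus (V \cap V_0) \oplus (V \cap V_2)$, where the eigenspaces are $V_{-2} = \langle X_{\alpha}\rangle$, $V_0 = \langle X_{\alpha+\beta}\rangle$, $V_2 = \langle X_{\beta}, X_{\alpha+2\beta}\rangle$. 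A short case analysis then uses the nontrivial bracket $[X_{\alpha+\beta}, X_{\beta}] \in \mathbb{C}^{\times}\cdot X_{\alpha+2\beta}$ and $[X_{\alpha}, X_{\beta}] \in \mathbb{C}^{\times}\cdot X_{\alpha+\beta}$ to force a pure eigenvector from $V_2$ in any mixed case; this leaves exactly four candidates: $\langle T_{1,-1}, X_{\alpha}, X_{\alpha+\beta}\rangle$, $\langle T_{1,-1}, X_{\alpha}, X_{\alpha+2\beta}\rangle$, $\langle T_{1,-1}, X_{\alpha+\beta}, X_{\alpha+2\beta}\rangle$, and $\langle T_{1,-1}, X_{\beta}, X_{\alpha+2\beta}\rangle$.

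Next, I would exhibit the stated equivalence by direct computation with $W$ from Eq.~\eqref{Eq:matrixW}. Using Eq.~\eqref{eq:Wconjugacy}, $W$ sends $T_{1,-1}$ to $-T_{1,-1}$, sends $X_{\alpha}$ to $X_{\alpha+2\beta}$, and sends $X_{\alpha+\beta}$ to $-X_{\alpha+\beta}$, hence conjugates $\langle T_{1,-1}, X_{\alpha}, X_{\alpha+\beta}\rangle$ onto $\langle T_{1,-1}, X_{\alpha+\beta}, X_{\alpha+2\beta}\rangle$, reducing the four candidates to the three listed representatives.

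Finally, pairwise inequivalence follows from comparing eigenvalue multisets of $\ad(T_{1,-1})$ on the nilradicals, which are $\{0,2\}$, $\{-2,2\}$, and $\{2,2\}$ respectively. Since in each of the three algebras any element that is semisimple modulo the nilradical is a scalar multiple of $T_{1,-1}$, a $Sp(4,\mathbb{C})$-conjugacy would rescale $T_{1,-1}$ uniformly and hence preserve the multiset of eigenvalues up to a common scalar; the three multisets are distinguished by having a zero entry, having opposite-sign entries, or being a double point. The main obstacle I anticipate is keeping the enumeration in Step 2 tight, since the two-dimensional $2$-eigenspace $V_2$ introduces a projective parameter that must be pinned down by the bracket relation before the list of candidates collapses to four.
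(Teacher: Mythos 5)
Your proof is correct and follows essentially the same route as the paper: enumerate the $\ad(T_{1,-1})$-invariant two-dimensional subspaces of $\algn$ closed under the bracket (arriving at the same four candidates), use $W$ to identify $\langle T_{1,-1}, X_{\alpha}, X_{\alpha+\beta}\rangle$ with $\langle T_{1,-1}, X_{\alpha+\beta}, X_{\alpha+2\beta}\rangle$, and rule out further equivalences by comparing the eigenvalue multisets of $\ad(T_{1,-1})$. You simply make explicit the eigenspace-decomposition and bracket-closure bookkeeping that the paper leaves implicit by reference to its earlier two-dimensional analysis.
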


\paragraph{$T_{1,1}$} 
We recall the eigenvalues and eigenvectors of $\ad(T_{1,1} )$ in $\algn$ 
as described in \S\ref{subsubT11dim2}.
Because the $2$-eigenspace of $\ad(T_{1,1} )$ in $\algn$ is $\algn_{\p}$,  
we find from Lemma~\ref{lemma:2dimSubsp.np} that any  
three-dimensional solvable subalgebra $\alga \subset \borel$ containing 
$T_{1,1}$ but not $\algt$ is conjugate to one of the following:  
\begin{equation}
\begin{array}{llllll}
\langle T_{1,1}, X_{\alpha +  \beta}, X_{\alpha+ 2 \beta} \rangle, \\
\langle T_{1,1}, X_{\alpha}, X_{\alpha+ 2 \beta}  \rangle, \\
\langle T_{1,1}, X_{\beta}, X_{\alpha+ 2 \beta} \rangle. 
\end{array}
\end{equation}

The matrix $A$ defined in Eq. \eqref{eq:matrixAinSp} conjugates  
$\langle T_{1,-1}, X_{\alpha + \beta},$  $X_{\alpha + 2 \beta} \rangle$
to $\langle T_{1,1}, X_{\beta}, X_{\alpha+ 2 \beta} \rangle$
and $\langle T_{1,-1}, X_{\beta},  X_{\alpha + 2\beta} \rangle$
to $\langle T_{1,1}, X_{\alpha +  \beta}, X_{\alpha+ 2 \beta} \rangle$.  

Since the eigenvalues of $\ad(T_{1,-1})$ on $X_{\alpha}$ and $X_{\alpha + 2\beta}$
are $-2$ and $2$,  respectively,  while those of 
$\ad(T_{1,1})$ on $X_{\alpha} $ and $X_{\alpha + 2 \beta}$ 
are both $2$,  we see that $\langle T_{1,1} , X_{\alpha} , X_{\alpha+ 2 \beta}  \rangle$ 
is not equivalent to $\langle T_{1,-1} , X_{\alpha},  X_{\alpha + 2\beta} \rangle$.   
Since the eigenvalue of $\ad(T_{1,1})$ on $X_{ \beta}$ is $0$,  we also see that 
$\langle T_{1,1}, X_{\alpha} , X_{\alpha+ 2 \beta}  \rangle$
is not equivalent to 
$\langle T_{1,1} , X_{\beta}, X_{\alpha+ 2 \beta} \rangle$.

Finally, the two-dimensional space spanned by $X_{\alpha + \beta}$ and $X_{\alpha + 2 \beta}$ 
consists of elements which are all of rank $2$, except for those on a single line, which are of rank $1$.  
However, the span of $X_{\alpha}$ and $X_{\alpha+ 2 \beta}$ contains two lines of elements of rank $1$. 
This shows that 
$\langle T_{1,1}, X_{\alpha + \beta},  X_{\alpha + 2\beta} \rangle$ is not equivalent 
to $\langle T_{1,1}, X_{\alpha}, X_{\alpha + 2 \beta} \rangle$.  

We summarize:

\begin{theorem}\label{lemma:singularss:dim3t} 
Any three-dimensional solvable subalgebra of $\borel$ containing a semisimple element with eigenvalues 
$1,1,-1,-1$ but not containing $\algt$ is equivalent to 
one of the following inequivalent algebras: 
\begin{equation}
\begin{array}{llllll}
\langle T_{1,-1}, X_{\alpha  + \beta},  X_{\alpha + 2 \beta} \rangle& \sim &  \langle T_{1,1}, X_{\beta},  X_{\alpha + 2\beta} \rangle, \\
\langle T_{1,-1}, X_{\alpha},  X_{\alpha + 2\beta} \rangle,& &\\
\langle T_{1,-1}, X_{\beta},  X_{\alpha + 2\beta} \rangle& \sim& \langle T_{1,1} , X_{\alpha  + \beta},  X_{\alpha + 2 \beta} \rangle, \\
\langle T_{1,1}, X_{\alpha}, X_{\alpha + 2 \beta} \rangle.&& 
\end{array}
\end{equation}
\end{theorem}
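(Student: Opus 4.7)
The theorem is a consolidation of the case analysis carried out in the preceding subsection, and I would prove it by organizing that material. By Table~\ref{table:ssclasses}, any semisimple element of $\alg$ with eigenvalues $1,1,-1,-1$ is $Sp(4,\Cb)$-conjugate to $T_{1,1}$ or $T_{1,-1}$. Since every solvable subalgebra is $Sp(4,\Cb)$-conjugate into $\borel$ and by Lemma~\ref{lemma:ssinborel} every semisimple element of $\borel$ is $B$-conjugate into $\algt$, I may assume $\alga \subset \borel$ contains either $T_{1,1}$ or $T_{1,-1}$, but not $\algt$.

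The $T_{1,-1}$ case is already handled by Lemma~\ref{lemma:T1-1:dim3}. For the $T_{1,1}$ case, I set $V := \alga \cap \algn$, a $2$-dimensional $\ad(T_{1,1})$-stable subspace of $\algn$. The $2$-eigenspace of $\ad(T_{1,1})$ on $\algn$ is $\algn_{\p}$ and its $0$-eigenspace is $\langle X_\beta\rangle$, so either $V \subseteq \algn_{\p}$ or $V = \langle X_\beta\rangle \oplus \langle Y\rangle$ for some $0 \ne Y \in \algn_{\p}$. In the first situation, Lemma~\ref{lemma:2dimSubsp.np}, together with the observation that the block-diagonal matrices used in its proof centralize $T_{1,1}$, yields the representatives $\langle T_{1,1}, X_{\alpha+\beta}, X_{\alpha+2\beta}\rangle$ and $\langle T_{1,1}, X_\alpha, X_{\alpha+2\beta}\rangle$. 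In the second, writing $Y = aX_\alpha + bX_{\alpha+\beta} + cX_{\alpha+2\beta}$ and computing $[X_\beta, Y] = aX_{\alpha+\beta} + 2bX_{\alpha+2\beta}$ from the brackets $[X_\beta,X_\alpha]=X_{\alpha+\beta}$, $[X_\beta,X_{\alpha+\beta}]=2X_{\alpha+2\beta}$, $[X_\beta,X_{\alpha+2\beta}]=0$, the requirement $[X_\beta,Y]\in V$ forces $a=b=0$, leaving the single representative $\langle T_{1,1}, X_\beta, X_{\alpha+2\beta}\rangle$.

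To conclude I would identify the cross-equivalences and inequivalences among the resulting six candidates. The matrix $A$ of Eq.~\eqref{eq:matrixAinSp} conjugates $T_{1,1}$ to $T_{1,-1}$, interchanges $X_\beta$ and $-X_{\alpha+\beta}$, and fixes $X_{\alpha+2\beta}$; this yields $\langle T_{1,-1}, X_{\alpha+\beta}, X_{\alpha+2\beta}\rangle \sim \langle T_{1,1}, X_\beta, X_{\alpha+2\beta}\rangle$ and $\langle T_{1,-1}, X_\beta, X_{\alpha+2\beta}\rangle \sim \langle T_{1,1}, X_{\alpha+\beta}, X_{\alpha+2\beta}\rangle$, reducing the six candidates to the four listed. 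Pairwise inequivalence is then established by three standard invariants: the $\ad$-eigenvalue patterns on $X_\alpha$ versus $X_{\alpha+2\beta}$ (both $2$ for $T_{1,1}$ but of opposite signs for $T_{1,-1}$) separate $\langle T_{1,1}, X_\alpha, X_{\alpha+2\beta}\rangle$ from $\langle T_{1,-1}, X_\alpha, X_{\alpha+2\beta}\rangle$; the presence of an $\ad(T_{1,1})$-eigenvalue $0$ on a nilpotent generator isolates the $X_\beta$-containing class; and counting rank-$1$ lines in the nilpotent plane $V$ (one for $\langle X_{\alpha+\beta}, X_{\alpha+2\beta}\rangle$, two for $\langle X_\alpha, X_{\alpha+2\beta}\rangle$) separates the remaining pair.

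I expect the main obstacle to be the mixed subcase $V = \langle X_\beta\rangle \oplus \langle Y\rangle$ in the $T_{1,1}$ analysis, since one must verify that the bracket calculation genuinely exhausts every admissible $Y$ so that no subalgebra is overlooked; the remaining steps are largely accounting.
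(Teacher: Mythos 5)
Your proposal is correct and follows essentially the same route as the paper: reduce to $T_{1,1}$ or $T_{1,-1}$ in $\borel$, quote Lemma~\ref{lemma:T1-1:dim3} for the $T_{1,-1}$ case, use the eigenspace decomposition of $\ad(T_{1,1})$ together with Lemma~\ref{lemma:2dimSubsp.np} for the $T_{1,1}$ case, obtain the cross-equivalences via the matrix $A$ of Eq.~\eqref{eq:matrixAinSp}, and separate the four classes by the same three invariants (adjoint eigenvalue patterns and the count of rank-$1$ lines). Your explicit bracket computation ruling out admissible $Y$ in the mixed subcase $V=\langle X_\beta\rangle\oplus\langle Y\rangle$ merely spells out a step the paper leaves implicit.
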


\subsection{Non-nilpotent three-dimensional solvable algebras containing no semisimple elements}\label{subsection:NossNotNilpDim3}
Any three-dimensional solvable algebra $\alga$ containing no semisimple elements but which does not 
consist entirely of nilpotent elements must contain an element conjugate to one of the first four 
matrices in Table~\ref{tab:nonss}.  Possibly after a conjugation,  we can assume that $\alga$ contains 
one of the following elements:
\begin{equation}
T_{1,1} + X_{\beta}, \quad
T_{1,-1} + X_{\alpha+ \beta}, \quad
T_{1,0} + X_{\alpha}, \quad
T_{0,1} + X_{\alpha + 2 \beta}.
\end{equation}

Note that each of these elements is of the form $T+X$,  where $0 \ne T\in \algt$ is non-regular and $X$ is the 
only positive root vector that commutes with $T$.  Note that if $N$ is any linear combination 
of the other positive root vectors,  then $T+N$ is semisimple.   
By Lemma \ref{lemmaFullss},  $\alga$ has a basis consisting of one of the above elements and two elements 
of $\algn$ which are linear combinations of the positive root vectors that do not commute with it.

\subsubsection{$T_{1,1} + X_{\beta}$}
If $\alga$ contains $T_{1,1} + X_{\beta}$,  then it cannot contain an element of the form
$X_{\alpha} + b X_{\alpha+\beta} + cX_{\alpha+2\beta}$,  because any such algebra would
contain all of $\algn_{\p}$ and thus have dimension $4$.  The only possible algebra is 
$\langle T_{1,1} + X_{\beta}, X_{\alpha+\beta}, X_{\alpha+2\beta} \rangle$.

\subsubsection{$T_{1,-1} + X_{\alpha + \beta}$}
If $\alga$ contains $T_{1,-1} + X_{\alpha + \beta}$,  then it cannot contain an element of the form
$aX_{\beta} + bX_{\alpha}  + cX_{\alpha+2\beta}$,  with $a, b  \ne 0$,  because any such algebra would
contain all of $\algn$ and hence $T_{1,-1}$.  The only possibilities are  
$\langle T_{1,-1} + X_{\alpha+\beta} , X_{\alpha}, X_{\alpha+2\beta} \rangle$
and  $\langle T_{1,-1} + X_{\alpha+\beta} , X_{\beta}, X_{\alpha+2\beta} \rangle$.

\subsubsection{$T_{1,0} + X_{\alpha}$}
If $\alga$ contains $T_{1,0} + X_{\alpha}$,  then it cannot contain an element of the form
$X_{\beta} + b X_{\alpha + \beta} + cX_{\alpha+2\beta}$,  because any such algebra would
contain all of $\algn_{\p}$ and hence $T_{1,0}$.  The only possibility is 
$\langle T_{1,0} + X_{\alpha} , X_{\alpha + \beta}, X_{\alpha+2\beta} \rangle$.

\subsubsection{$T_{0,1} + X_{\alpha + 2 \beta}$} Note that $X_{\alpha + 2 \beta}$ commutes with 
all the other positive root vectors,  and they have distinct eigenvalues for $\ad(T_{0,1})$ and hence 
for $\ad(T_{0,1} + X_{\alpha + 2 \beta})$.   
So if $\alga$ contains $T_{0,1} + X_{\alpha + 2 \beta}$,  then it has a basis consisting of 
$T_{0,1} + X_{\alpha + 2 \beta}$ and two positive root vectors 
other than $X_{\alpha + 2 \beta}$.

It cannot contain $X_{\alpha}$  and $X_{\beta}$ because they generate $\algn$.  
It cannot contain $X_{\beta}$ and $X_{\alpha + \beta}$ because then it would contain 
$X_{\alpha+2\beta}$ and hence $T_{0,1}$.  The only possibility is 
$\langle T_{0,1} + X_{\alpha + 2 \beta}, X_{\alpha}, X_{\alpha+\beta} \rangle$.

\subsubsection{Equivalences}
The algebras 
$\langle T_{1,1} + X_{\beta}, X_{\alpha+\beta} , X_{\alpha+2\beta} \rangle$ and 
$\langle T_{1,-1}$ $+ X_{\alpha+\beta}, X_{\alpha}, X_{\alpha+2\beta} \rangle$ 
cannot be equivalent, since in the former,  the restriction of 
$\ad(T_{1,1} + X_{\beta})$ to the nilpotent subspace has a 
two-dimensional generalized eigenspace with eigenvalue $2$,  whereas in the 
latter, the restriction of $\ad(T_{1,-1} + X_{\alpha+\beta} )$ to the nilpotent subspace
has distinct eigenvalues $2, -2$.

However,  the element $W$ of Eq. \eqref{Eq:matrixW}
takes 
$\langle T_{1,0} + X_{\alpha} , X_{\alpha + \beta},$ $X_{\alpha+2\beta} \rangle$ to 
$\langle T_{0,1} + X_{\alpha + 2 \beta}, X_{\alpha}, X_{\alpha+\beta} \rangle$,  showing  
that they are equivalent. 
The matrix $A$ defined in Eq. \eqref{eq:matrixAinSp} 
conjugates 
$\langle T_{1,1} + X_{\beta}, X_{\alpha+\beta}, X_{\alpha+2\beta} \rangle$ to 
$\langle T_{1,-1} + X_{\alpha+\beta}, X_{\beta}, X_{\alpha+2\beta} \rangle$.

Finally, we note that $\langle T_{1,0} + X_{\alpha} , X_{\alpha + \beta}, X_{\alpha+2\beta} \rangle$ 
consists entirely of singular elements, so it cannot be conjugate to either
$\langle T_{1,1} + X_{\beta}, X_{\alpha+\beta},$ $X_{\alpha+2\beta} \rangle$ or
$\langle T_{1,-1}$ $+ X_{\alpha+\beta}, X_{\alpha}, X_{\alpha+2\beta} \rangle$, 
both of which contain invertible matrices.  

\begin{theorem}\label{prop:nonSSnonNilpDim3}
Any three-dimensional solvable algebra $\alga$ containing no semisimple elements but which does not 
consist entirely of nilpotent elements must be equivalent to one of the following inequivalent 
subalgebras:
\begin{equation}
\begin{array}{llllll}
\langle T_{1,1} + X_{\beta}, X_{\alpha+\beta} , X_{\alpha+2\beta} \rangle  & \sim & 
\langle T_{1,-1} + X_{\alpha+\beta} , X_{\beta}, X_{\alpha+2\beta} \rangle, \\ 
\langle T_{1,-1} + X_{\alpha+\beta} , X_{\alpha}, X_{\alpha+2\beta} \rangle, && \\
\langle T_{1,0} + X_{\alpha} , X_{\alpha + \beta}, X_{\alpha+2\beta} \rangle  & \sim &
\langle T_{0,1} + X_{\alpha + 2 \beta}, X_{\alpha}, X_{\alpha+\beta} \rangle. 
\end{array}
\end{equation}
\end{theorem}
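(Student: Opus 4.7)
The plan is to exploit Tables \ref{table:ssclasses} and \ref{tab:nonss} together with Lemma \ref{lemmaFullss} in order to reduce to a small list of candidate subalgebras, then sort out the equivalences using the specific matrices $W$ and $A$ from Eqs.~\eqref{Eq:matrixW} and \eqref{eq:matrixAinSp}, and finally separate the remaining classes via rank and Jordan-form invariants. After passing to a conjugate, I may assume $\alga \subset \borel$.

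First, I would select a distinguished element of $\alga$. Since $\alga$ contains a non-nilpotent, non-semisimple element, Table \ref{tab:nonss} shows that after a scalar multiplication and conjugation in $B$ I may assume $\alga$ contains one of $T_{1,1}+X_{\beta}$, $T_{1,-1}+X_{\alpha+\beta}$, $T_{1,0}+X_{\alpha}$, or $T_{0,1}+X_{\alpha+2\beta}$. Each of these is of the form $T+X_\gamma$ where $T\in\algt$ is singular and $X_\gamma$ is the unique positive root vector commuting with $T$. I note that adding any nonzero linear combination of the \emph{other} positive root vectors to $T$ produces a semisimple element (its diagonal part is $T$, and any such nilpotent perturbation has generalized eigenvalues forced by Lemma \ref{lemmapairs}); hence by Lemma~\ref{lemmaFullss} (applied to the subspace $\alga \cap \algn$, which must be two-dimensional since $\alga$ contains no element of $\algt$), $\alga$ has a basis consisting of $T+X_\gamma$ together with two elements of $\algn$ lying in the span of the root vectors that do not commute with $T$.

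Next I would go case by case to list candidates, using in each case that $\alga$ is closed under brackets and has dimension exactly $3$. For $T_{1,1}+X_\beta$, the two extra basis vectors lie in $\algn_{\p}$; any inclusion of a vector with a nonzero $X_\alpha$-component would force all of $\algn_{\p}$ into $\alga$, giving dimension $4$, so the only option is $\langle T_{1,1}+X_\beta,\ X_{\alpha+\beta},\ X_{\alpha+2\beta}\rangle$. For $T_{1,-1}+X_{\alpha+\beta}$, an analogous argument (now any simultaneous presence of $X_\alpha$ and $X_\beta$ components would force all of $\algn$ and hence $T_{1,-1}$ into $\alga$) leaves $\langle T_{1,-1}+X_{\alpha+\beta},\ X_\alpha,\ X_{\alpha+2\beta}\rangle$ and $\langle T_{1,-1}+X_{\alpha+\beta},\ X_\beta,\ X_{\alpha+2\beta}\rangle$. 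For $T_{1,0}+X_\alpha$, ruling out $\algn_{\p}$-generating combinations gives only $\langle T_{1,0}+X_\alpha,\ X_{\alpha+\beta},\ X_{\alpha+2\beta}\rangle$, and for $T_{0,1}+X_{\alpha+2\beta}$ one similarly gets only $\langle T_{0,1}+X_{\alpha+2\beta},\ X_\alpha,\ X_{\alpha+\beta}\rangle$.

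Then I would identify the equivalences and inequivalences. Conjugating by $W$ of Eq.~\eqref{Eq:matrixW} sends $T_{1,0}+X_\alpha$ to $T_{0,1}+X_{\alpha+2\beta}$ and permutes the remaining root vectors in the fourth and fifth candidates so as to match them, producing the lower displayed equivalence; conjugating by $A$ of Eq.~\eqref{eq:matrixAinSp} sends $T_{1,1}+X_\beta$ to $T_{1,-1}+X_{\alpha+\beta}$ and $X_{\alpha+\beta}$ to $-X_\beta$ (while fixing $X_{\alpha+2\beta}$), producing the upper displayed equivalence. To finish I would distinguish the three resulting classes by invariants: (i) the algebra $\langle T_{1,0}+X_\alpha,\ X_{\alpha+\beta},\ X_{\alpha+2\beta}\rangle$ consists entirely of singular matrices, whereas the other two contain invertible elements ($T_{1,\pm1}$ plus a strictly upper-triangular perturbation); (ii) the classes $\langle T_{1,1}+X_\beta,\ X_{\alpha+\beta},\ X_{\alpha+2\beta}\rangle$ and $\langle T_{1,-1}+X_{\alpha+\beta},\ X_\alpha,\ X_{\alpha+2\beta}\rangle$ are separated by looking at the action of $\mathrm{ad}(T+X_\gamma)$ on $\alga\cap\algn$, which in the first case has a single generalized eigenvalue $2$ with a two-dimensional generalized eigenspace, but in the second case has the two distinct eigenvalues $\pm 2$.

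The step I expect to be most delicate is the third one: verifying that no further conjugation in $Sp(4,\mathbb{C})$ identifies the three surviving classes. Rank alone suffices to separate the singular class from the other two, but distinguishing the $T_{1,1}$-type from the $T_{1,-1}$-type requires an invariant that survives arbitrary conjugation; the Jordan structure of $\mathrm{ad}(T+X_\gamma)$ restricted to the nilpotent part of $\alga$ is intrinsic to the isomorphism class of $\alga$ as a Lie algebra and is the natural choice.
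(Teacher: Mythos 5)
Your proposal is correct and follows essentially the same route as the paper: reduce via Table \ref{tab:nonss} to the four distinguished elements, use Lemma \ref{lemmaFullss} to get a basis of the stated form, enumerate the candidates by the same closure arguments, identify the two equivalences via $W$ and $A$, and separate the three remaining classes by singularity versus invertibility and by the generalized eigenspace structure of the adjoint action on the nilpotent part. No substantive differences from the paper's argument.
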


\subsection{Three-dimensional nilpotent subalgebras}
Since 
 $[X_{\beta}, X_{\alpha}] = X_{\alpha+ \beta}$ and 
 $[X_{\beta}, X_{\alpha+ \beta}] = 2X_{\alpha+ 2\beta}$, 
 $X_{\beta}$ and $X_{\alpha}$ generate $\algn$ and 
$X_{\beta}$ and $X_{\alpha + \beta}$ generate 
$\langle X_{\beta},  X_{\alpha + \beta}, X_{\alpha + 2 \beta} \rangle$. 
In fact,  any algebra containing an element of the form 
$X_{\beta} + N$,  with $N \in \algn_{\p}$, and an element of the form 
$X_{\alpha} + bX_{\alpha + \beta} + cX_{\alpha + 2 \beta}$ must contain $\algn$.  
For any $r, s \ne 0$,  the algebra 
$\langle rX_{\beta} + sX_{\alpha},  X_{\alpha + \beta}, X_{\alpha + 2 \beta} \rangle$
is conjugate by an element of the diagonal subgroup $T \subset Sp(4, \Cb)$ to 
$\langle X_{\beta} + X_{\alpha},  X_{\alpha + \beta}, X_{\alpha + 2 \beta} \rangle$.
We summarize.

\begin{theorem}\label{prop:NilpDim3}
Any three-dimensional solvable subalgebra $\alga$ 
consisting entirely of nilpotent elements must be equivalent to one of the following inequivalent 
subalgebras:
\begin{equation}
\algn_{\p}, \quad
\langle X_{\beta},  X_{\alpha + \beta}, X_{\alpha + 2 \beta} \rangle, \quad
\langle X_{\beta} + X_{\alpha},  X_{\alpha + \beta}, X_{\alpha + 2 \beta} \rangle. 
\end{equation}
\end{theorem}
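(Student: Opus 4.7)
The plan is to organize the three-dimensional nilpotent subalgebras $\alga$ of $\borel$ according to the projection $\pi\colon \algn \to \Cb X_\beta$ onto the $X_\beta$-component. Since every nilpotent element of $\borel$ lies in $\algn$, we have $\alga \subseteq \algn$. Either $\pi(\alga) = 0$, forcing $\alga \subseteq \algn_\p$, and since $\dim \algn_\p = 3$ this yields $\alga = \algn_\p$ (the first representative); or $\pi(\alga) = \Cb X_\beta$, in which case $V := \alga \cap \algn_\p$ is two-dimensional and $\alga$ is generated by $V$ together with a preimage $X_\beta + N$ of $X_\beta$, where $N \in \algn_\p$.

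In the second case I would show that $V \subseteq \langle X_{\alpha+\beta}, X_{\alpha+2\beta}\rangle$. Suppose instead that $V$ contained an element of the form $Y = X_\alpha + bX_{\alpha+\beta} + cX_{\alpha+2\beta}$. Using the fact recalled in the paragraph preceding the theorem, one computes
\begin{equation}
[X_\beta + N, Y] = X_{\alpha+\beta} + 2b X_{\alpha+2\beta}, \qquad [X_\beta + N, X_{\alpha+\beta} + 2bX_{\alpha+2\beta}] = 2X_{\alpha+2\beta},
\end{equation}
so $\alga$ contains $X_{\alpha+2\beta}$, then $X_{\alpha+\beta}$, then $Y - bX_{\alpha+\beta} - cX_{\alpha+2\beta} = X_\alpha$; together with $X_\beta+N$ this forces $\alga \supseteq \algn$, contradicting $\dim \alga = 3$. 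Hence $V = \langle X_{\alpha+\beta}, X_{\alpha+2\beta}\rangle$. Subtracting elements of $V$ from $X_\beta + N$ reduces $N$ to a multiple $rX_\alpha$, so $\alga = \langle X_\beta + rX_\alpha, X_{\alpha+\beta}, X_{\alpha+2\beta}\rangle$, and the brackets $[X_\beta + rX_\alpha, X_{\alpha+\beta}] = 2X_{\alpha+2\beta}$, $[X_\beta + rX_\alpha, X_{\alpha+2\beta}] = 0$ confirm this is a subalgebra for every $r$. When $r = 0$ this is the second representative; when $r \neq 0$, conjugation by the diagonal element $\mathrm{diag}(r, 1, r^{-1}, 1) \in Sp(4,\Cb)$ scales each positive root vector diagonally and sends $X_\beta + rX_\alpha$ to $r(X_\beta + X_\alpha)$ while preserving $\langle X_{\alpha+\beta}, X_{\alpha+2\beta}\rangle$, giving the third representative.

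For pairwise inequivalence, $\algn_\p$ is abelian while the other two have nonzero bracket $[X_\beta, X_{\alpha+\beta}] = 2X_{\alpha+2\beta}$, so $\algn_\p$ is distinguished. To separate the remaining two, I would note that a generic element $rX_\beta + sX_{\alpha+\beta} + tX_{\alpha+2\beta}$ of $\langle X_\beta, X_{\alpha+\beta}, X_{\alpha+2\beta}\rangle$ has rows $2$ and $4$ of its matrix both supported in column $3$, hence linearly dependent, forcing rank at most $2$; whereas $X_\beta + X_\alpha$ has rank $3$. Since rank is conjugation-invariant, the last two subalgebras are inequivalent. The main obstacle in the argument is the bracket-chain computation ruling out $V \cap \langle X_\alpha \rangle \neq 0$; everything else is either dimension counting or the diagonal conjugation already alluded to in the preceding paragraph.
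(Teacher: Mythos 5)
Your proposal is correct and follows essentially the same route as the paper: the key steps (a subalgebra meeting $X_{\beta}$ nontrivially cannot also contain an element with nonzero $X_{\alpha}$-component, since such a pair generates all of $\algn$; a diagonal element of $Sp(4,\Cb)$ normalizes $X_{\beta}+rX_{\alpha}$ to $X_{\beta}+X_{\alpha}$; and abelianness together with the rank-$3$ element $X_{\alpha}+X_{\beta}$ separates the three classes) are exactly the ones the paper uses, with your write-up merely supplying the details the paper leaves implicit.
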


\section{Four-dimensional solvable subalgebras of $\soo$}

In this section, we classify the four-dimensional  solvable subalgebras of $\soo$ into five cases:  Four-dimensional subalgebras containing a Cartan subalgebra (see Theorem \ref{containingCartandim4}); solvable subalgebras containing a regular 
semisimple element but not all of $\algt$ (see Theorem \ref{containingRegSSdim4});  solvable subalgebras containing a non-regular 
semisimple element but not all of $\algt$ (see Theorem \ref{prop:nonregssdim4}); non-nilpotent solvable subalgebras containing no semisimple elements (see Theorem \ref{fourfouree}); and nilpotent subalgebras (see Theorem \ref{fourfourff}).
Again, without loss of generality, we assume that each solvable subalgebra is in the Borel subalgebra $\borel$. The results are summarized in Table \ref{fourdimuu}.

\subsection{Four-dimensional Subalgebras Containing a Cartan Subalgebra}
The four-dimensional subalgebras of $\borel$ that contain $\algt$ are 
\begin{equation}
\langle \algt, X_{\alpha}, X_{\alpha + \beta} \rangle, \\ 
\langle \algt, X_{\alpha}, X_{\alpha + 2\beta} \rangle, \\ 
\langle \algt, X_{\alpha + \beta}, X_{\alpha + 2 \beta} \rangle, \\ 
\langle \algt, X_{ \beta}, X_{\alpha + 2 \beta} \rangle.
\end{equation}
The element $W$ of Eq. \eqref{Eq:matrixW} conjugates 
$\langle \algt, X_{\alpha}, X_{\alpha + \beta} \rangle$ to  
$\langle \algt, X_{\alpha + \beta},$ $X_{\alpha + 2 \beta} \rangle$,  and the 
element $A$ of Eq. \eqref{eq:matrixAinSp} conjugates 
$\langle \algt, X_{ \beta}, X_{\alpha + 2 \beta} \rangle$ to  
$\langle \algt, X_{\alpha + \beta}, X_{\alpha + 2 \beta} \rangle$.

Consider the eight elements $T_{\pm 2, \pm 1}, T_{\pm 1, \pm2} \in \algt$.  They are all the elements 
of $\algt$ that have distinct eigenvalues $\pm 1, \pm 2$.  Acting on $\algn$ via the 
adjoint representation,  they all have the property that they act with even eigenvalues on 
the root vectors corresponding to long roots and 
with odd eigenvalues on the root vectors corresponding to short roots.  This shows 
that  $\langle \algt, X_{\alpha}, X_{\alpha + \beta} \rangle$ 
is not equivalent to $\langle \algt, X_{\alpha}, X_{\alpha + 2\beta} \rangle$. 

We summarize.

\begin{theorem}\label{containingCartandim4}
Any four-dimensional subalgebra of $\borel$ that contains $\algt$ is 
equivalent to one of the following inequivalent subalgebras.   
\begin{equation}
\langle \algt, X_{\alpha}, X_{\alpha + \beta} \rangle, \quad
\langle \algt, X_{\alpha}, X_{\alpha + 2\beta} \rangle.  
\end{equation}
\end{theorem}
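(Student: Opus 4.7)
The plan is to proceed in three steps: (1) enumerate candidate four-dimensional subalgebras of $\borel$ containing $\algt$, (2) collapse them under the action of $Sp(4,\Cb)$ using the conjugations by $W$ and $A$ already introduced, and (3) show that the two surviving representatives are genuinely inequivalent.

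For step (1), I would observe that $\algt$ normalizes each root space of $\borel$, so any subalgebra of $\borel$ containing $\algt$ is the direct sum of $\algt$ with a sum of root spaces. Since $\dim \algt = 2$, a four-dimensional such subalgebra must consist of $\algt$ plus exactly two positive root spaces $\langle X_\gamma, X_\delta\rangle$, and closure under bracket forces $\gamma + \delta$ to either fail to be a root or coincide with $\gamma$ or $\delta$ (which cannot happen for positive roots). Going through the four positive roots $\alpha,\beta,\alpha+\beta,\alpha+2\beta$, the pairs $\{\beta,\alpha\}$, $\{\beta,\alpha+\beta\}$ are excluded because their brackets produce extra root vectors, leaving exactly the four candidates
\begin{equation*}
\langle \algt, X_{\alpha}, X_{\alpha+\beta}\rangle,\
\langle \algt, X_{\alpha}, X_{\alpha+2\beta}\rangle,\
\langle \algt, X_{\alpha+\beta}, X_{\alpha+2\beta}\rangle,\
\langle \algt, X_{\beta}, X_{\alpha+2\beta}\rangle.
\end{equation*}

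For step (2), I would simply quote the conjugations already appearing in the excerpt: the matrix $W$ of Eq.~\eqref{Eq:matrixW} sends $X_\alpha \mapsto X_{\alpha+2\beta}$ and $X_{\alpha+2\beta}\mapsto X_\alpha$ while fixing $\algt$ setwise and sending $X_{\alpha+\beta}$ to a multiple of itself, so it identifies $\langle \algt, X_\alpha, X_{\alpha+\beta}\rangle$ with $\langle \algt, X_{\alpha+\beta}, X_{\alpha+2\beta}\rangle$. The matrix $A$ of Eq.~\eqref{eq:matrixAinSp} interchanges $X_\beta$ and (a multiple of) $X_{\alpha+\beta}$ and fixes $X_{\alpha+2\beta}$ up to sign while preserving $\algt$, which identifies $\langle\algt, X_\beta, X_{\alpha+2\beta}\rangle$ with $\langle\algt, X_{\alpha+\beta}, X_{\alpha+2\beta}\rangle$. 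Hence the four candidates collapse to two.

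For step (3), which I expect to be the only subtle point, I would exhibit a conjugation-invariant that separates the two classes. The elements $T_{\pm 2,\pm 1}$ and $T_{\pm 1,\pm 2}$ exhaust the diagonal matrices in $\algt$ with the distinct eigenvalue multiset $\{\pm 1,\pm 2\}$, and any $Sp(4,\Cb)$-equivalence between subalgebras containing $\algt$ must send such an element to another one (since the equivalence is implemented by a Weyl group element on $\algt$). For every one of these eight elements, a direct calculation of the root values shows that $\mathrm{ad}(T)$ acts by an \emph{even} integer on the long-root vectors $X_\alpha, X_{\alpha+2\beta}$ and by an \emph{odd} integer on the short-root vectors $X_\beta, X_{\alpha+\beta}$. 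Consequently, within $\langle\algt, X_\alpha, X_{\alpha+\beta}\rangle$ the two nonzero $\mathrm{ad}(T)$-eigenspaces carry one even and one odd eigenvalue, whereas within $\langle\algt, X_\alpha, X_{\alpha+2\beta}\rangle$ they carry two even eigenvalues. This parity contrast is preserved under any $Sp(4,\Cb)$-conjugation and thus establishes inequivalence, completing the proof.
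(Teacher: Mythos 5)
Your proposal is correct and follows essentially the same route as the paper: the same enumeration of the four candidates $\langle \algt, X_{\alpha}, X_{\alpha+\beta}\rangle$, $\langle \algt, X_{\alpha}, X_{\alpha+2\beta}\rangle$, $\langle \algt, X_{\alpha+\beta}, X_{\alpha+2\beta}\rangle$, $\langle \algt, X_{\beta}, X_{\alpha+2\beta}\rangle$, the same collapsing via the conjugations by $W$ and $A$, and the same final invariant (the eight elements $T_{\pm 2,\pm 1}$, $T_{\pm 1,\pm 2}$ acting with even adjoint eigenvalues on long-root vectors and odd ones on short-root vectors). The only difference is that you spell out the weight-space decomposition in step (1) and the Weyl-group justification in step (3) slightly more explicitly than the paper does.
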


\subsection{Four-dimensional solvable subalgebras containing a regular 
semisimple element but not a Cartan subalgebra}
Suppose $a \ne 0, \pm 1$.  If $a \ne 3$,  then it is easy to see that 
any four-dimensional subalgebra $\alga \subset \borel$ which contains $T_{a,1}$ but not 
$\algt$ must be one of the following:
\begin{equation}
\langle T_{a,1}, \algn_{\p} \rangle, \quad
\langle T_{a,1}, X_{\beta}, X_{\alpha + \beta}, X_{\alpha + 2 \beta} \rangle.
\end{equation}
In the first case,  $[ \alga, \alga] = \algn_{\p}$,  which is abelian, 
while in the second, 
$[ \alga, \alga] = \langle  X_{\beta}, X_{\alpha + \beta}, X_{\alpha + 2 \beta}  \rangle$,  which is not.  In particular,  the two algebras are 
not equivalent.  

We also observe that the matrix $W$ of Eq. \eqref{Eq:matrixW} conjugates 
$\langle T_{a,1}, \algn_{\p} \rangle$ to 
$\langle T_{a^{-1},1}, \algn_{\p} \rangle$.  
As in \S\ref{subsubsection:regss3d},  we argue that any other equivalence between pairs of 
such algebras must take $T_{a,1}$ to one of $\pm T_{a,1}$,  $\pm T_{1,a}$,  $\pm T_{-a,1}$, or $ \pm T_{1,-a}$. 
The only possibilities with the right eigenvalues on $\algn_{\p}$ are $T_{a,1}$ and $T_{1,a}$,  so 
the above equivalence is the only one.  

Similarly,  the matrix $A$ of Eq. \eqref{eq:matrixAinSp} conjugates 
$\langle T_{a,1}, X_{\beta}, X_{\alpha + \beta},$ $X_{\alpha + 2 \beta} \rangle$ 
to
$\langle T_{-a,1}, X_{\beta}, X_{\alpha + \beta},$ $X_{\alpha + 2 \beta} \rangle$. 
In any equivalence between two algebras of this form,  
the only rank $1$ eigenvector $X_{\alpha + 2 \beta} $ must go to a multiple of itself,   so 
its eigenvalue must be preserved,  and 
the only possibilities are that $T_{a,1}$ 
must go to itself or $T_{a,-1}$. Again the above equivalence is the only one.  
\medskip

If $a = 3$,  then two analogous algebras occur,  with the same equivalences, but for any $r,s \ne 0$,  
there is also the algebra 
$\langle T_{3,1}, rX_{\alpha}+ s  X_{\beta}, X_{\alpha + \beta}, X_{\alpha + 2 \beta} \rangle$.  
By Lemma \ref{Lemma:T31},  for any nonzero $r,s$, this algebra is equivalent to 
$\alga = \langle T_{3,1}, X_{\alpha} + X_{\beta}, X_{\alpha + \beta}, X_{\alpha + 2 \beta} \rangle$.  

This algebra $\alga$ is not equivalent to 
$\langle T_{a,1}, \algn_{\p} \rangle$,  for any $a \ne 0, \pm 1$, because 
$[ \alga , \alga] = \langle  X_{\alpha}+ X_{\beta}, X_{\alpha + \beta}, X_{\alpha + 2 \beta}  \rangle$,  which is not abelian.  
Moreover, it is not equivalent to 
$\langle T_{a,1}, X_{\beta}, X_{\alpha + \beta}, X_{\alpha + 2 \beta} \rangle$,  for any $a \ne 0, \pm 1$, because the 
corresponding commutator subalgebras are 
$\langle X_{\alpha} + X_{\beta}, X_{\alpha + \beta},$ $X_{\alpha + 2 \beta} \rangle$ 
and 
$\langle X_{\beta}, X_{\alpha + \beta}, X_{\alpha + 2 \beta} \rangle$,  
whose elements are generically of rank $3$ and rank $2$,  respectively.  

We summarize.  

\begin{theorem}\label{containingRegSSdim4}
Any four-dimensional subalgebra of $\borel$ that contains a regular semisimple element but not a Cartan subalgebra is 
equivalent to one of the following subalgebras,  which are inequivalent apart from the equivalences noted.   
\begin{footnotesize}
\begin{equation}
\begin{array}{lllll}
\langle T_{a,1}, \algn_{\p} \rangle &\sim& \langle T_{a^{-1},1}, \algn_{\p} \rangle,\\
\langle T_{a,1}, X_{\beta}, X_{\alpha + \beta}, X_{\alpha + 2 \beta} \rangle &\sim& 
\langle T_{-a,1}, X_{\beta}, X_{\alpha + \beta}, X_{\alpha + 2 \beta} \rangle, \\
\langle T_{3,1}, X_{\alpha} + X_{\beta} , X_{\alpha + \beta}, X_{\alpha + 2 \beta} \rangle .
&
\end{array}
\end{equation}\end{footnotesize}
where $a \ne 0, \pm 1$.
\end{theorem}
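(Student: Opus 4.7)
The strategy is to parametrize all four-dimensional subalgebras of $\borel$ containing $T_{a,1}$ (with $a\ne 0,\pm 1$) but not $\algt$, then collapse them via explicit conjugations from $Sp(4,\Cb)$, and finally separate the resulting families using the derived subalgebra and ranks of nilpotent elements. By Lemma~\ref{lemmaFullss}, the hypothesis that $\alga$ contains no Cartan subalgebra forces every element of $\alga$ to have its semisimple part in the line $\Cb T_{a,1}$; combined with $T_{a,1}\in\alga$, this yields $\alga=\Cb T_{a,1}\oplus(\alga\cap\algn)$, where $\alga\cap\algn$ is three-dimensional, $\ad(T_{a,1})$-invariant, and a Lie subalgebra of $\algn$.

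The eigenvalues of $\ad(T_{a,1})$ on $X_\alpha,X_\beta,X_{\alpha+\beta},X_{\alpha+2\beta}$ are $2,\,a-1,\,a+1,\,2a$, pairwise distinct for $a\ne 0,\pm 1,3$. For such generic $a$, any three-dimensional $\ad(T_{a,1})$-invariant subspace of $\algn$ is a span of three root vectors; consulting the bracket table (the only nonzero brackets among positive root vectors are $[X_\beta,X_\alpha]\propto X_{\alpha+\beta}$ and $[X_\beta,X_{\alpha+\beta}]\propto X_{\alpha+2\beta}$), exactly two of the four such spans close up as subalgebras, namely $\algn_{\p}$ and $\langle X_\beta,X_{\alpha+\beta},X_{\alpha+2\beta}\rangle$. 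For $a=3$, the coincidence $\alpha(T_{3,1})=\beta(T_{3,1})=2$ produces an additional family $\langle rX_\alpha+sX_\beta,X_{\alpha+\beta},X_{\alpha+2\beta}\rangle$; Lemma~\ref{Lemma:T31} collapses every $r,s\ne 0$ instance to $\langle X_\alpha+X_\beta,X_{\alpha+\beta},X_{\alpha+2\beta}\rangle$, while $r=0$ or $s=0$ returns to one of the two previous families.

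The conjugations recorded in Eq.~\eqref{eq:Wconjugacy} then furnish the equivalences in the statement: $W$ sends $T_{a,1}$ to a scalar multiple of $T_{a^{-1},1}$ while permuting the spanning vectors of $\algn_{\p}$ among themselves, giving $\langle T_{a,1},\algn_{\p}\rangle\sim\langle T_{a^{-1},1},\algn_{\p}\rangle$; the matrix $A$ of Eq.~\eqref{eq:matrixAinSp} sends $T_{a,1}$ to $-T_{-a,1}$ and stabilizes $\langle X_\beta,X_{\alpha+\beta},X_{\alpha+2\beta}\rangle$ (it swaps $X_\beta$ and $X_{\alpha+\beta}$ up to sign and fixes $X_{\alpha+2\beta}$), giving the second. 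To rule out further intra-family equivalences I would follow the template of \S\ref{subsubsection:regss3d}: any conjugation must carry $T_{a,1}$ to an element of $\algt$ with eigenvalues $\pm a,\pm 1$, that is, to one of $\pm T_{\pm a,1},\pm T_{1,\pm a}$, and matching the eigenvalue on the distinguished rank-one vector $X_{\alpha+2\beta}$ eliminates all but the cases already listed.

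The main obstacle is inequivalence across the three families, which I would settle using the derived subalgebra as a conjugation invariant. Concretely, $[\langle T_{a,1},\algn_{\p}\rangle,\langle T_{a,1},\algn_{\p}\rangle]=\algn_{\p}$ is abelian; the derived subalgebra of $\langle T_{a,1},X_\beta,X_{\alpha+\beta},X_{\alpha+2\beta}\rangle$ is the non-abelian Heisenberg-type algebra $\langle X_\beta,X_{\alpha+\beta},X_{\alpha+2\beta}\rangle$ whose generic elements have rank~$2$; and the derived subalgebra of the $a=3$ algebra is $\langle X_\alpha+X_\beta,X_{\alpha+\beta},X_{\alpha+2\beta}\rangle$, containing the rank-$3$ element $X_\alpha+X_\beta$ (see Theorem~\ref{nilclass}). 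These invariants are mutually exclusive, completing the classification.
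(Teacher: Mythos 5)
Your proposal is correct and follows essentially the same route as the paper: enumerate the possible nilpotent parts as $\ad(T_{a,1})$-invariant three-dimensional subalgebras of $\algn$ (with the extra family arising only at $a=3$ and collapsed by Lemma~\ref{Lemma:T31}), realize the stated equivalences via $W$ and $A$, and separate the three families by whether the derived subalgebra is abelian and by the ranks of its elements. The only quibble is that for $\langle T_{a,1},\algn_{\p}\rangle$ the derived algebra $\algn_{\p}$ contains two rank-one eigenlines ($X_{\alpha}$ and $X_{\alpha+2\beta}$), so the intra-family uniqueness there is better settled, as the paper does, by matching the full eigenvalue multiset of $\ad(T_{a,1})$ on $\algn_{\p}$ rather than the eigenvalue on a single distinguished rank-one vector.
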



\subsection{Four-dimensional solvable subalgebras containing a non-regular 
semisimple element but not a Cartan subalgebra}

As was observed in \S\ref{subsection.nonreg.ss},  the non-regular elements in $\algt$ are multiples of 
$T_{1,0}$, $T_{0,1}$, $T_{1,1}$,  and $T_{1,-1}$.

\subsubsection{$T_{0,1}$}  Since the positive root vectors have different eigenvalues for 
$\ad(T_{0,1})$,  the only possibilities are algebras spanned by three of them together with 
$T_{0,1}$.  The only possibilities are 
$\langle T_{0,1}, \algn_{\p} \rangle$ and
$\langle T_{0,1}, X_{\beta}, X_{\alpha + \beta},$ $X_{\alpha + 2 \beta} \rangle$.  
Since the eigenvalues of $\ad(T_{0,1})$ on $\algn_{\p}$ are $0,1,2$ while those 
on $\langle X_{\beta}, X_{\alpha + \beta}, X_{\alpha + 2 \beta} \rangle$
are $-1, 1,0$,  we see they are inequivalent.  

\subsubsection{$T_{1,0}$}  The eigenvalues of the positive root vectors for 
$\ad(T_{1,0})$ are $1$ for $X_{\beta}$ and for $X_{\alpha + \beta}$, $0$ for $X_{\alpha}$,  
and $2$ for $X_{\alpha + 2\beta}$. 

It is not difficult to check that the only possibilities are 
$\langle T_{1,0}, \algn_{\p} \rangle$ and
$\langle T_{1,0}, X_{\beta}, X_{\alpha + \beta}, X_{\alpha + 2 \beta} \rangle$.  
Since the eigenvalues of $\ad(T_{1,0})$ on $\algn_{\p}$ are $0,1,2$ while those 
on $\langle X_{\beta}, X_{\alpha + \beta}, X_{\alpha + 2 \beta} \rangle$
are $1, 1, 2$,  we see they are inequivalent.  

The matrix $W$ of Eq. \eqref{Eq:matrixW}  conjugates 
$\langle T_{0,1}, \algn_{\p} \rangle$ to 
$\langle T_{1,0}, \algn_{\p} \rangle$,  but 
  $\langle T_{0,1}, X_{\beta}, X_{\alpha + \beta}, X_{\alpha + 2 \beta} \rangle$  
and $\langle T_{1,0}, X_{\beta}, X_{\alpha + \beta}, X_{\alpha + 2 \beta} \rangle$ 
are inequivalent because in the former,  $\ad(T_{0,1})$ is singular on the 
nilpotent subalgebra while in the latter,  $\ad(T_{1,0})$ is nonsingular on the 
nilpotent subalgebra.  

We summarize.

\begin{lemma}\label{lemma:T10T01dim4}
A four-dimensional solvable algebra containing $T_{0,1}$ or $T_{1,0}$ but not a Cartan subalgebra 
is equivalent to one of the following: 
\begin{equation}
\begin{array}{lllll}
\langle T_{0,1}, \algn_{\p} \rangle \sim \langle T_{1,0}, \algn_{\p} \rangle,   \\
\langle T_{0,1}, X_{\beta}, X_{\alpha + \beta}, X_{\alpha + 2 \beta} \rangle, \\
\langle T_{1,0}, X_{\beta}, X_{\alpha + \beta}, X_{\alpha + 2 \beta} \rangle.    
\end{array}
\end{equation}
These algebras are all pairwise inequivalent.
\end{lemma}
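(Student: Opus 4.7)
The strategy is to handle $T_{0,1}$ and $T_{1,0}$ as two separate enumeration problems, then merge the resulting lists and resolve equivalences and inequivalences by $Sp(4,\Cb)$-invariants. In each case, a four-dimensional solvable subalgebra of $\borel$ containing the semisimple element $S$ but not all of $\algt$ has the form $\langle S\rangle \oplus V$, where $V$ is a three-dimensional subspace of $\algn$ that is $\ad(S)$-invariant and closed under the bracket.

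For $T_{0,1}$, the eigenvalues of $\ad(T_{0,1})$ on $X_{\alpha}, X_{\beta}, X_{\alpha+\beta}, X_{\alpha+2\beta}$ are the distinct values $2, -1, 1, 0$, so any invariant $V$ is the span of three positive root vectors. A short check using $[X_{\beta}, X_{\alpha}] = X_{\alpha+\beta}$ and $[X_{\beta}, X_{\alpha+\beta}] = 2X_{\alpha+2\beta}$ shows that only $\algn_{\p}$ and $\langle X_{\beta}, X_{\alpha+\beta}, X_{\alpha+2\beta}\rangle$ are closed. For $T_{1,0}$ the analysis parallels the above but must account for the repeated eigenvalue $1$ of $\ad(T_{1,0})$ on the plane $\langle X_{\beta}, X_{\alpha+\beta}\rangle$: after decomposing $V$ by eigenspaces and running through the cases, only $\algn_{\p}$ and $\langle X_{\beta}, X_{\alpha+\beta}, X_{\alpha+2\beta}\rangle$ again survive, since any space of the form $\langle X_{\alpha}, aX_{\beta} + bX_{\alpha+\beta}, X_{\alpha+2\beta}\rangle$ with $a \neq 0$ fails closure via the same identities, and the choice of line in the $1$-eigenspace can be normalized using the centralizer matrix in Eq.~\eqref{Matrix24rotate}.

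To produce the equivalence $\langle T_{0,1}, \algn_{\p}\rangle \sim \langle T_{1,0}, \algn_{\p}\rangle$, I would invoke the matrix $W$ from Eq.~\eqref{Eq:matrixW}: it sends $T_{0,1}$ to $T_{1,0}$ and permutes $X_{\alpha}, X_{\alpha+\beta}, X_{\alpha+2\beta}$ (up to sign) among themselves, thereby preserving $\algn_{\p}$. The same $W$ cannot be used to relate the remaining pair because $WX_{\beta}W^{-1} \notin \borel$.

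For pairwise inequivalence, I would appeal to $Sp(4,\Cb)$-invariant properties of the algebras. The nilradical of the first equivalence class is $\algn_{\p}$, which is abelian, whereas the nilradical common to the other two algebras is the three-dimensional Heisenberg algebra $\langle X_{\beta}, X_{\alpha+\beta}, X_{\alpha+2\beta}\rangle$; this separates the first class from the other two. To distinguish $\langle T_{0,1}, X_{\beta}, X_{\alpha+\beta}, X_{\alpha+2\beta}\rangle$ from $\langle T_{1,0}, X_{\beta}, X_{\alpha+\beta}, X_{\alpha+2\beta}\rangle$, I would note that $\ad(T_{0,1})$ is singular on the common nilradical (with kernel spanned by $X_{\alpha+2\beta}$) while $\ad(T_{1,0})$ acts invertibly there with eigenvalues $1, 1, 2$; singularity of the adjoint action of a non-nilpotent element on the nilradical is preserved under $Sp(4,\Cb)$-conjugation (up to scaling of the chosen non-nilpotent element), so the two algebras are inequivalent. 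The main obstacle in filling in the details is the $T_{1,0}$ enumeration, where the two-dimensional $1$-eigenspace of $\ad(T_{1,0})$ introduces a continuous family of candidate three-dimensional subspaces that must be ruled out or absorbed by the centralizer.
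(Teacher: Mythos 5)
Your proposal is correct and follows essentially the same route as the paper: split into the $T_{0,1}$ and $T_{1,0}$ cases, enumerate the $\ad$-invariant three-dimensional bracket-closed subspaces of $\algn$ (handling the repeated eigenvalue $1$ for $T_{1,0}$), use the matrix $W$ of Eq.~\eqref{Eq:matrixW} for the single equivalence, and separate the classes by invariants — the paper compares the eigenvalue multisets of $\ad(T)$ on the nilpotent part and the singularity of $\ad(T_{0,1})$ versus nonsingularity of $\ad(T_{1,0})$ on the common nilradical, while your abelian-versus-Heisenberg nilradical observation is an equivalent (and slightly more explicitly invariant) way to phrase the same distinctions.
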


\subsubsection{$T_{1,1}$}  The eigenvalues of the positive root vectors for 
$\ad(T_{1,1})$ are $0$ for $X_{\beta}$ and $2$ for $X_{\alpha}$,  $X_{\alpha + \beta}$,  
and $X_{\alpha + 2\beta}$. 

It is not difficult to check that the only possibilities are 
$\langle T_{1,1}, \algn_{\p} \rangle$ and
$\langle T_{1,1}, X_{\beta}, X_{\alpha + \beta}, X_{\alpha + 2 \beta} \rangle$.  
Since the eigenvalues of $\ad(T_{1,1})$ on $\algn_{\p}$ are $2,2,2$ while those 
on $\langle X_{\beta}, X_{\alpha + \beta}, X_{\alpha + 2 \beta} \rangle$
are $0, 2, 2$,  we see they are inequivalent.  

\subsubsection{$T_{1,-1}$}  The eigenvalues of the positive root vectors for 
$\ad(T_{1,-1})$ are $2$ for $X_{\beta}$ and $X_{\alpha + 2\beta}$,  $-2$ for $X_{\alpha}$,  and $0$ for $X_{\alpha + \beta}$. 
It is not difficult to check that the only possibilities are 
$\langle T_{1,-1}, \algn_{\p} \rangle$ and
$\langle T_{1,-1}, X_{\beta}, X_{\alpha + \beta},$ $X_{\alpha + 2 \beta} \rangle$.  
Since the eigenvalues of $\ad(T_{1,-1})$ on $\algn_{\p}$ are $-2,0,2$ while those 
on $\langle X_{\beta}, X_{\alpha + \beta}, X_{\alpha + 2 \beta} \rangle$
are $2, 0, 2$,  we see they are inequivalent.  

The matrix $A$ of Eq. \eqref{eq:matrixAinSp} conjugates 
$\langle T_{1,1}, X_{\beta}, X_{\alpha + \beta}, X_{\alpha + 2 \beta} \rangle$
to $\langle T_{1,-1}, X_{\beta}, X_{\alpha + \beta}, X_{\alpha + 2 \beta} \rangle$. 
Consideration of the eigenvalues of $\ad(T_{1,-1})$ on $\algn_{\p}$ shows 
that $\langle T_{1,1}, \algn_{\p} \rangle$ and $\langle T_{1,-1}, \algn_{\p} \rangle$ 
are not equivalent.  

We summarize.

\begin{theorem}\label{prop:nonregssdim4}
A four-dimensional solvable algebra containing a non-regular semisimple element but not a Cartan subalgebra 
is equivalent to one of the following: 
\begin{equation}
\begin{array}{lllll}
\langle T_{0,1}, \algn_{\p} \rangle \sim \langle T_{1,0}, \algn_{\p} \rangle,  \\
\langle T_{0,1}, X_{\beta}, X_{\alpha + \beta}, X_{\alpha + 2 \beta} \rangle, \\ 
\langle T_{1,0}, X_{\beta}, X_{\alpha + \beta}, X_{\alpha + 2 \beta} \rangle,     \\
\langle T_{1,1}, \algn_{\p} \rangle, \\ \langle T_{1,-1}, \algn_{\p} \rangle,    \\
\langle T_{1,1}, X_{\beta}, X_{\alpha + \beta}, X_{\alpha + 2 \beta} \rangle  
\sim \langle T_{1,-1}, X_{\beta}, X_{\alpha + \beta}, X_{\alpha + 2 \beta} \rangle,   
\end{array}
\end{equation}
These algebras are all pairwise inequivalent.
\end{theorem}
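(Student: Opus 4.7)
The plan is to assemble the case analysis developed in the preceding subsubsections into the final classification. By Lemma \ref{lemmaCartan}, any four-dimensional solvable subalgebra of $\borel$ that contains a semisimple element but not $\algt$ can be conjugated by an element of $B$ so as to intersect $\algt$. If moreover the semisimple element is non-regular, then after rescaling we may assume $\alga$ contains one of $T_{0,1}$, $T_{1,0}$, $T_{1,1}$, or $T_{1,-1}$. The cases $T_{0,1}$ and $T_{1,0}$ have already been fully resolved in Lemma \ref{lemma:T10T01dim4}, contributing the first three algebras to the list and the equivalence $\langle T_{0,1}, \algn_{\p}\rangle \sim \langle T_{1,0}, \algn_{\p}\rangle$.

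For $T_{1,1}$, I would use the eigenvalue data for $\ad(T_{1,1})$ on $\algn$ (eigenvalue $0$ on $X_{\beta}$, eigenvalue $2$ on each of $X_{\alpha}, X_{\alpha+\beta}, X_{\alpha+2\beta}$): any four-dimensional subalgebra containing $T_{1,1}$ must consist of $T_{1,1}$ together with a three-dimensional nilpotent subalgebra stable under $\ad(T_{1,1})$. The only such nilpotent subalgebras of $\algn$ are $\algn_{\p}$ and $\langle X_{\beta}, X_{\alpha+\beta}, X_{\alpha+2\beta}\rangle$, giving the two candidates $\langle T_{1,1}, \algn_{\p}\rangle$ and $\langle T_{1,1}, X_{\beta}, X_{\alpha+\beta}, X_{\alpha+2\beta}\rangle$. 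These are inequivalent because the multiset of eigenvalues of $\ad(T_{1,1})$ on the nilpotent part is $(2,2,2)$ in one and $(0,2,2)$ in the other. The parallel enumeration for $T_{1,-1}$ (eigenvalues $-2, 2, 0, 2$ on $X_{\alpha}, X_{\beta}, X_{\alpha+\beta}, X_{\alpha+2\beta}$ respectively) yields $\langle T_{1,-1}, \algn_{\p}\rangle$ and $\langle T_{1,-1}, X_{\beta}, X_{\alpha+\beta}, X_{\alpha+2\beta}\rangle$, distinguished by eigenvalue multisets $(-2, 0, 2)$ versus $(2, 0, 2)$.

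Next I would establish the stated equivalence via the matrix $A$ of Eq.~\eqref{eq:matrixAinSp}, which conjugates $T_{1,1}$ to $T_{1,-1}$ and interchanges $X_{\beta}$ with $-X_{\alpha+\beta}$ while fixing $X_{\alpha+2\beta}$ (all three stay in $\borel$); this carries $\langle T_{1,1}, X_{\beta}, X_{\alpha+\beta}, X_{\alpha+2\beta}\rangle$ onto $\langle T_{1,-1}, X_{\beta}, X_{\alpha+\beta}, X_{\alpha+2\beta}\rangle$. By contrast, $A$ sends $X_{\alpha}$ out of $\borel$, and the eigenvalue multisets $(2,2,2)$ for $\ad(T_{1,1})|_{\algn_{\p}}$ versus $(-2,0,2)$ for $\ad(T_{1,-1})|_{\algn_{\p}}$ differ, so $\langle T_{1,1}, \algn_{\p}\rangle$ and $\langle T_{1,-1}, \algn_{\p}\rangle$ remain inequivalent.

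Finally I would rule out any cross-family equivalences between the $\{T_{1,0}, T_{0,1}\}$ list and the $\{T_{1,1}, T_{1,-1}\}$ list. Since any conjugation preserves the spectrum (up to scalar, within an algebra) of its semisimple elements, and the eigenvalues $(1,0,-1,0)$ of $T_{1,0}$ or $T_{0,1}$ are not proportional to the eigenvalues $(1,1,-1,-1)$ of $T_{1,1}$ or $T_{1,-1}$, there can be no equivalence between algebras drawn from different families. The main obstacle is not a deep computation but the bookkeeping: ensuring that the only equivalences among the six candidates are exactly the two asserted, which is handled cleanly by the two invariants used throughout (the eigenvalue multisets of $\ad(T)$ on the nilpotent complement, and the spectrum of the semisimple representative up to scaling).
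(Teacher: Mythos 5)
Your proposal is correct and follows essentially the same route as the paper: reduce to the four non-regular representatives $T_{0,1}, T_{1,0}, T_{1,1}, T_{1,-1}$, enumerate the admissible three-dimensional $\ad(T)$-stable nilpotent complements in each case, obtain the two equivalences via the matrices $W$ and $A$, and separate the remaining algebras by the eigenvalue multisets of $\ad(T)$ on the nilpotent part. The only difference is cosmetic: you spell out the cross-family inequivalence (spectra $(1,0,-1,0)$ versus $(1,1,-1,-1)$ not being proportional) that the paper leaves implicit in its claim of pairwise inequivalence.
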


\subsection{Non-nilpotent four-dimensional solvable algebras containing no semisimple elements}\label{subsection:NossNotNilpDim4}
As in \S\ref{subsection:NossNotNilpDim3},  we see that, possibly after a conjugation,
any four-dimensional solvable algebra $\alga$ containing no semisimple elements but which does not 
consist entirely of nilpotent elements must contain one of the following elements:
\begin{equation}
T_{1,1} + X_{\beta}, \quad
T_{1,-1} + X_{\alpha+ \beta}, \quad
T_{1,0} + X_{\alpha}, \quad
T_{0,1} + X_{\alpha + 2 \beta}.
\end{equation}
By Lemma \ref{lemmaFullss} and the argument given at the beginning of \S\S\ref{subsection:NossNotNilpDim3},  $\alga$ has a basis consisting of one of the above elements and three elements 
of $\algn$ which are linear combinations of the positive root vectors that do not commute with it.  
However,  since $\langle T_{1,-1} + X_{\alpha+ \beta}, X_{\alpha},  X_{\beta}, X_{\alpha+ 2\beta}   \rangle$ and 
$\langle  T_{0,1} + X_{\alpha + 2 \beta},  X_{\alpha},  X_{\beta}, X_{\alpha+ \beta}   \rangle$ both contain 
$X_{\alpha}$ and $X_{\beta}$,  they both contain all of $\algn$,  and hence the former contains the semisimple 
element $T_{1,-1}$ and the latter contains $T_{0,1}$.  

The remaining two algebras 
$\langle T_{1,1} + X_{\beta}, X_{\alpha}, X_{\alpha+ \beta}, X_{\alpha+ 2\beta} \rangle$ and 
$\langle  T_{1,0} + X_{\alpha}, X_{\beta},  X_{\alpha+ \beta}, X_{\alpha+ 2\beta} \rangle$   
are clearly inequivalent,  because the former contains invertible elements while the latter 
does not.  

\begin{theorem}\label{fourfouree}
Any four-dimensional solvable algebra $\alga$ containing no semisimple elements but which does not 
consist entirely of nilpotent elements must be conjugate to one of the following inequivalent algebras:
\begin{equation}
\begin{array}{lllll}
\langle T_{1,1} + X_{\beta}, X_{\alpha}, X_{\alpha+ \beta}, X_{\alpha+ 2\beta} \rangle, \\
\langle  T_{1,0} + X_{\alpha}, X_{\beta},  X_{\alpha+ \beta}, X_{\alpha+ 2\beta} \rangle. 
\end{array}
\end{equation}
\end{theorem}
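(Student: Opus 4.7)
The plan is to mimic the strategy used in \S\ref{subsection:NossNotNilpDim3} one dimension higher, exploiting the Jordan decomposition of a non-semisimple, non-nilpotent element together with Lemma~\ref{lemmaFullss} to reduce to a small finite list of candidates, and then ruling out the unwanted ones via explicit containment arguments.

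First I would argue that, after conjugation inside $B$, the algebra $\alga$ must contain at least one of the four representatives
\begin{equation*}
T_{1,1}+X_{\beta},\quad T_{1,-1}+X_{\alpha+\beta},\quad T_{1,0}+X_{\alpha},\quad T_{0,1}+X_{\alpha+2\beta},
\end{equation*}
since any non-semisimple, non-nilpotent element of $\borel$ is $B$-conjugate to $T+N$ with $T\in\algt$ non-regular and $N$ the unique positive root vector commuting with $T$, then scaled by a diagonal element so that its nilpotent part has coefficient $1$ (cf.\ the first four rows of Table~\ref{tab:nonss} and the opening paragraph of \S\ref{subsection:NossNotNilpDim3}).

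Next, invoking Lemma~\ref{lemmaFullss}, I would complete the chosen element to a basis of $\alga$ by three further elements taken in $\algn$ (for otherwise a second linearly independent $\algt$-component would generate $\algt\subset\alga$, forcing a semisimple element into $\alga$). Since the chosen element already contributes its $\algn$-part, these three remaining basis vectors may be chosen as linear combinations of the positive root vectors that do not commute with the semisimple part $T$; indeed, any root vector commuting with $T$ can be absorbed into the span already containing $T+N$. The key step is then a direct case-by-case examination: if $\alga\supseteq\langle T_{1,-1}+X_{\alpha+\beta}\rangle$, the only way to extend to dimension four with elements of $\algn$ is to add $X_{\alpha},X_{\beta},X_{\alpha+2\beta}$, but then $\alga\supseteq\algn$, and since $T_{1,-1}+X_{\alpha+\beta}\in\alga$ we get $T_{1,-1}\in\alga$, a contradiction; the same argument excludes $T_{0,1}+X_{\alpha+2\beta}$, since adjoining $X_{\alpha}$ and $X_{\beta}$ again fills out $\algn$ and produces $T_{0,1}\in\alga$. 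For the remaining two cases, the uniqueness of the triple of added root vectors is immediate: the positive root vectors not commuting with $T_{1,1}+X_{\beta}$ are exactly $X_{\alpha},X_{\alpha+\beta},X_{\alpha+2\beta}$, and those not commuting with $T_{1,0}+X_{\alpha}$ are exactly $X_{\beta},X_{\alpha+\beta},X_{\alpha+2\beta}$.

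Finally, to verify that the two surviving algebras are genuinely inequivalent, I would observe that the representative $T_{1,1}+X_{\beta}$ is invertible (its eigenvalues are $\pm 1$ with multiplicities), whereas every element of $\langle T_{1,0}+X_{\alpha},X_{\beta},X_{\alpha+\beta},X_{\alpha+2\beta}\rangle$ has $0$ as an eigenvalue (the diagonal part is a scalar multiple of $T_{1,0}$ plus a strictly upper triangular perturbation inside $\algn$, so the second diagonal entry remains $0$). Conjugation preserves the spectrum, so no element of $Sp(4,\Cb)$ can intertwine the two algebras. The main obstacle I anticipate is bookkeeping rather than conceptual: verifying carefully that in the first two ruled-out cases every attempt to produce a fourth linearly independent nilpotent generator commuting appropriately with $T+N$ really does drag in both $X_\alpha$ and $X_\beta$ and hence all of $\algn$.
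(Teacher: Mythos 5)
Your proposal is correct and follows essentially the same route as the paper: reduce to the four representatives $T_{1,1}+X_{\beta}$, $T_{1,-1}+X_{\alpha+\beta}$, $T_{1,0}+X_{\alpha}$, $T_{0,1}+X_{\alpha+2\beta}$, use Lemma~\ref{lemmaFullss} to complete with three elements of $\algn$ built from the non-commuting root vectors, discard the two cases that force $X_{\alpha},X_{\beta}\in\alga$ and hence all of $\algn$ together with a semisimple element, and separate the two survivors by the presence or absence of invertible elements.
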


\subsection{Four-dimensional nilpotent subalgebras}

\begin{theorem}\label{fourfourff}
The only four-dimensional nilpotent subalgebra of $\borel$ is $\algn$.  
\end{theorem}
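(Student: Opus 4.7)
My plan is to leverage the Lie algebra projection $\pi \colon \borel \to \algt$ along $\algn$, which is a surjective homomorphism because $\algn$ is an ideal of $\borel$ containing $[\borel, \borel]$. Given a four-dimensional nilpotent subalgebra $\alga \subset \borel$, I would split into cases according to whether $\pi(\alga) = 0$. If $\pi(\alga) = 0$, then $\alga \subset \algn$; since $\dim \alga = 4 = \dim \algn$, we conclude $\alga = \algn$.

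To rule out $\pi(\alga) \neq 0$, I would argue by contradiction. Pick $X \in \alga$ with $T := \pi(X) \neq 0$, and let $X = X_s + X_n$ be its Jordan decomposition in $\alg$. Since $\borel$ is an algebraic Lie subalgebra, it is closed under Jordan decomposition, so $X_s, X_n \in \borel$. Moreover, any nilpotent element of $\borel$ lies in $\algn$: the block upper triangular form of elements of $\borel$ exhibits their eigenvalues as diagonal entries, which must all vanish for a nilpotent element. Hence $X_n \in \algn$ and $X_s$ has diagonal part $T$, so by Lemma \ref{lemma:ssinborel} there is $b \in B$ with $b X_s b^{-1} = T$.

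Because $\alga$ is nilpotent, Engel's theorem makes $\ad(X)|_\alga$ nilpotent, so $\alga$ is contained in the generalized $0$-eigenspace of $\ad(X)$ on $\borel$. Since $\ad(X_s)$ and $\ad(X_n)$ are commuting semisimple and nilpotent operators on $\borel$ summing to $\ad(X)|_\borel$, this generalized $0$-eigenspace equals $\ker(\ad(X_s)|_\borel)$, i.e., the centralizer of $X_s$ in $\borel$. Conjugation by $b$ preserves $\borel$, so the centralizer of $X_s$ has the same dimension as the centralizer of $T$ in $\borel$, namely
\[
2 + |\{\gamma > 0 \colon \gamma(T) = 0\}|.
\]
The inclusion $\alga \subset \ker(\ad(X_s)|_\borel)$ together with $\dim \alga = 4$ then forces at least two positive roots to vanish on $T$.

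The contradiction is a direct check using $\alpha(T_{a,b}) = 2b$, $\beta(T_{a,b}) = a - b$, $(\alpha + \beta)(T_{a,b}) = a + b$, and $(\alpha + 2\beta)(T_{a,b}) = 2a$: each of the six pairs of positive roots has common zero set $\{T_{0,0}\}$, contradicting $T \neq 0$. The main obstacle is the careful handling of the Jordan decomposition inside $\borel$, specifically identifying the generalized $0$-eigenspace of $\ad(X)$ with the centralizer of $X_s$; once this bookkeeping is in place, the dimension bound and the root case check immediately deliver the result.
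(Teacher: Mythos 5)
Your proof is correct, but it does considerably more than the paper, which states this theorem with no proof at all. In the paper's case division for dimension four, ``nilpotent subalgebra'' is the complement of ``does not consist entirely of nilpotent elements'' (compare the wording of Theorem \ref{fourfouree}), so the intended argument is the one-liner: every element of $\borel$ is block-triangular with its eigenvalues on the diagonal, so a nilpotent element of $\borel$ has zero diagonal and hence lies in $\algn$; thus a subalgebra all of whose elements are nilpotent is contained in $\algn$, and a four-dimensional one equals $\algn$ by dimension count. This is exactly your first case ($\pi(\alga)=0$), and under the paper's reading the second case never arises. What your argument buys is the stronger, literal statement that $\algn$ is the only four-dimensional subalgebra of $\borel$ that is nilpotent \emph{as an abstract Lie algebra} (a hypothesis that does not by itself force the elements to be nilpotent, e.g.\ $\algt$ is abelian but consists of semisimple elements). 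Your route through the Jordan decomposition is sound: $X_s,X_n\in\borel$ since $\borel$ is algebraic, $X_n\in\algn$ by the diagonal argument, $\alga$ lies in the generalized $0$-eigenspace of $\ad(X)$, which is $\ker\ad(X_s)$, and the centralizer of a nonzero $T\in\algt$ in $\borel$ has dimension $2+\#\{\gamma>0:\gamma(T)=0\}\le 3$ because any two distinct positive roots of $C_2$ are linearly independent on the two-dimensional $\algt$. So the proposal is a valid, strictly more general proof; for the paper's purposes the trivial containment $\alga\subseteq\algn$ suffices.
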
 

\section{Five- and six-dimensional solvable subalgebras of $\soo$}

\subsection{Five-dimensional solvable subalgebras} 
A five-dimensional solvable subalgebra $\alga \subset \borel$ must contain some 
semisimple elements.  By Lemma \ref{lemmaCartan},  we can assume it contains  
$\algt$ if it contains a Cartan subalgebra and $0 \ne T \in \algt$ otherwise.  

If  $\alga \subset \borel$ contains $\algt$,  then it must be spanned by $\algt$ and 
three of the root vectors $X_{\alpha}$, $X_{\beta}$,  $X_{\alpha+ \beta}$,  and $X_{\alpha+ 2\beta}$.   
As above,  the possibilities are $\alga = \algt + \algn_{\p}$ and   
$\alga = \langle \algt, X_{\beta}, X_{\alpha+ \beta}, X_{\alpha+ 2\beta} \rangle$.   

If  $\alga \subset \borel$ does not contain $\algt$,  then it must contain $0 \ne T \in \algt$. 
We can find a basis for $\alga$ consisting of $T$ and four elements of $\algn $.  But 
these elements span $\algn$,  and $\alga = \langle T,  \algn \rangle$.   

Now it is easy to check that the normalizer of $\algn$ in $Sp(4, \Cb)$ is $B$.  
So if Ad$(g)$ takes $\langle T,  \algn \rangle$ to $\langle T',  \algn \rangle$,  for some $T, T' \in \algt$,  
then $g$ must preserve the nilpotent radical $\algn$ and therefore must be in $B$.  Using 
Lemma \ref{lemmaCartan},  this means that $T$ and $T'$ must be 
conjugate under $B$.  Of course,  for any nonzero $r \in \Cb$,  we have that 
$\langle T,  \algn \rangle = \langle rT,  \algn \rangle$.
So the subalgebra $\langle T,  \algn \rangle$ is determined by the nonzero scalar multiples of the 
$B$-conjugacy class of $T$.  

Using the list in Table \ref{table:ssclasses},  we find the following list of representatives: 
$T_{1,a}$, $a \ne 0, \pm 1$; 
$T_{1,0}$, 
$T_{0,1}$, 
$T_{1,1} $, and 
$T_{1,-1}$.

We summarize.  

\begin{theorem}\label{prop:dim5}  
Up to equivalence,  every five-dimensional solvable subalgebra is one of the following.  
In \S\S\ref{5dimequivalences},  we shall show that they are in fact pairwise nonisomorphic. 
\begin{equation}
\begin{array}{llllllllll}
&\algt + \algn_{\p},   &  \langle \algt , X_{\beta}, X_{\alpha+ \beta}, X_{\alpha+ 2\beta} \rangle, \\
&\langle T,  \algn \rangle, & {\rm where} \,\,T \,\, {\rm is \,\,one \,\, of\,\, the \,\, following: }\\
&&T_{1,a}, \quad  a \ne 0, \pm 1; \\
&&
T_{1,0}, \quad
T_{0,1}, \quad 
T_{1,1}, \quad
T_{1,-1}  
\\
\end{array}
\end{equation}
\end{theorem}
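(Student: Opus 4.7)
The plan is to split into the case where the subalgebra contains a full Cartan subalgebra and the case where it does not, following the setup already laid out before the theorem statement. Throughout we use Lemma \ref{lemmaCartan} to reduce to subalgebras $\alga \subseteq \borel$ that actually contain $\algt$ (in the Cartan case) or at least some nonzero $T \in \algt$ (otherwise). Since $\dim \borel = 6$ and $\dim \algn = 4$, any $5$-dimensional $\alga \subseteq \borel$ satisfies $\dim(\alga \cap \algn) \geq 3$, so in particular $\alga$ always has at least one semisimple element after a suitable conjugation (applying Lemma \ref{lemma:ssinborel} to a linear combination of a chosen element and a basis for $\alga \cap \algn$); this justifies applying Lemma \ref{lemmaCartan}.

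For the Cartan case, $\alga$ is spanned by $\algt$ together with three of the root vectors $X_{\alpha}, X_{\beta}, X_{\alpha+\beta}, X_{\alpha+2\beta}$. I would check which triples span a subalgebra: the bracket $[X_{\beta},X_{\alpha}] = X_{\alpha+\beta}$ and $[X_{\beta},X_{\alpha+\beta}] = 2X_{\alpha+2\beta}$ force closure, and a short check shows that the only admissible triples, up to the conjugations by $W$ and $A$ from Eqs.~\eqref{Eq:matrixW} and \eqref{eq:matrixAinSp} that were used for the $4$-dimensional classification, yield exactly the two representatives $\algt + \algn_{\p}$ and $\langle \algt, X_{\beta}, X_{\alpha+\beta}, X_{\alpha+2\beta}\rangle$.

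For the non-Cartan case, $\alga$ contains some $0 \neq T \in \algt$ but not $\algt$. A basis for $\alga$ then consists of $T$ together with four elements of $\algn$; these must span $\algn$ for dimensional reasons, so $\alga = \langle T, \algn \rangle$. The main classification step is then to identify when $\langle T, \algn\rangle$ and $\langle T', \algn\rangle$ are $\SptwoC$-conjugate. Here the key observation is that the normalizer of $\algn$ in $\SptwoC$ is $B$, so any $g \in \SptwoC$ with ${\rm Ad}(g)\langle T, \algn\rangle = \langle T', \algn\rangle$ necessarily lies in $B$; combining with Lemma \ref{lemmaCartan} reduces the problem to the $B$-conjugacy classes of $T$, up to nonzero scalar (since $\langle T, \algn\rangle = \langle rT, \algn\rangle$ for $r \neq 0$). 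I would verify that $\langle \algt, X_\beta, X_{\alpha+\beta}, X_{\alpha+2\beta}\rangle$ is distinct from the $\langle T, \algn\rangle$ family by noting that it does not contain $\algn$.

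Finally, I would read off the representatives from Table \ref{table:ssclasses}: up to nonzero scalar, the $B$-classes are $T_{1,a}$ with $a \neq 0,\pm 1$; $T_{1,0}$; $T_{0,1}$; $T_{1,1}$; and $T_{1,-1}$, giving exactly the list in the theorem. The main obstacle is the normalizer statement ${\rm Norm}_{\SptwoC}(\algn) = B$, which I expect to prove by noting that $\algn$ determines the Borel $\borel$ (as its normalizer in $\soo$), hence any group element normalizing $\algn$ normalizes $\borel$ and therefore lies in $B$; the remaining equivalences among representatives within the $\langle T,\algn\rangle$ family are then immediate from the $B$-inequivalence recorded in Table \ref{table:ssclasses}. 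Pairwise nonisomorphism of the full list is deferred to \S\S\ref{5dimequivalences}.
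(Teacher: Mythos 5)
Your proposal is correct and follows essentially the same route as the paper: split on whether $\algt$ is contained, reduce the non-Cartan case to $\langle T,\algn\rangle$, and use ${\rm Norm}_{\SptwoC}(\algn)=B$ together with Table \ref{table:ssclasses} to enumerate the $B$-classes of $T$ up to scalar. The only differences are that you supply details the paper leaves implicit (the existence of semisimple elements via $\dim(\alga\cap\algn)\ge 3$, and the proof of the normalizer claim via $\algn$ determining $\borel$), both of which are sound.
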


\subsection{Six-dimensional solvable subalgebras} 
The only six-dimensional subalgebra of $\borel$ is $\borel$ itself.  

\section{Isomorphisms}\label{isomorphs}

\subsection{Dimension $2$}

We first identify the two-dimensional subalgebras of $\soo$ with respect de Graaf's classification \cite{degraafa}. For each algebra of de Graaf's classification that appears, we then identify which algebra it is isomorphic to  in the classification described by \v{S}nobl and Winternitz in \cite{levi}.

The classification of two-dimensional algebras amounts to whether they are abelian or not.  
Of course, $\algt$ is abelian,  so $\algt \cong K^{1}$.\\

Since $[T_{3,1}, X_{\alpha} + X_{\beta}] = 2( X_{\alpha} + X_{\beta} )$,  
$\langle T_{3,1}, X_{\alpha} + X_{\beta} \rangle \cong K^{2}$.

Since $[T_{a,1} , X_{\alpha}] = 2 X_{\alpha}$,
$\langle T_{a,1} , X_{\alpha} \rangle \cong K^{2}$.  

Since $[T_{a,1} , X_{\beta} ] = (a-1)X_{\beta}$, 
$\langle T_{a,1} , X_{\beta} \rangle \cong K^{2}$, provided $a \ne 1$.

Since $[T_{1,0}, X_{\alpha}] = 0$, 
$\langle  T_{1,0}, X_{\alpha} \rangle \cong K^{1}$. 

Since $[ T_{1,0}, X_{\beta}] = X_{\beta}$,  
$ \langle  T_{1,0}, X_{\beta} \rangle \cong K^{2}$.
 
Since $[ T_{1,0}, X_{\alpha  +2 \beta} ] = 2 X_{\alpha  +2 \beta}$, 
$\langle  T_{1,0}, X_{\alpha  +2 \beta} \rangle \cong K^{2}$.

Since $[  T_{1,1}, X_{\alpha}  ] = 2  X_{\alpha} $,
$\langle  T_{1,1}, X_{\alpha} \rangle \cong K^{2}$.

Since $[ T_{1,1}, X_{\beta}] = 0$, 
$\langle  T_{1,1}, X_{\beta} \rangle \cong K^{1}$.

Since $[ T_{1,1}, X_{\alpha+ \beta} ] = 2 X_{\alpha+ \beta}$,
$\langle  T_{1,1}, X_{\alpha+ \beta} \rangle \cong K^{2}$. 

Since $[  T_{1,1}+X_{\beta}, X_{\alpha + 2 \beta} ] = 2  X_{\alpha + 2 \beta}$,
$\langle T_{1,1}+X_{\beta}, X_{\alpha + 2 \beta} \rangle \cong K^{2}$.

Since $[ T_{1,0}+X_{\alpha }, X_{\alpha + \beta } ] =  X_{\alpha + \beta }$,
$\langle T_{1,0}+X_{\alpha }, X_{\alpha + \beta } \rangle \cong K^{2}$.

Since $[ T_{1,0}+X_{\alpha }, X_{\alpha + 2\beta } ] = 2 X_{\alpha + 2\beta }$,
$\langle T_{1,0}+X_{\alpha }, X_{\alpha + 2\beta } \rangle \cong K^{2}$. 

Since $[  X_{\alpha}, X_{\alpha + \beta} ] = 0$, 
$\langle X_{\alpha}, X_{\alpha + \beta} \rangle \cong K^{1}$.

Since $[ X_{\alpha},  X_{\alpha + 2 \beta} ] = 0$, 
$\langle X_{\alpha},  X_{\alpha + 2 \beta}  \rangle \cong K^{1}$.

Since $[ X_{\beta} + X_{\alpha}, X_{\alpha+ 2 \beta} ] = 0$,  
$\langle X_{\beta} + X_{\alpha}, X_{\alpha+ 2 \beta} \rangle \cong K^{1}$.

\vspace{2mm}

The (only) nonabelian two-dimensional solvable algebra is $K^{2}$ in de Graaf's classification,
given by $[x_{1}, x_{2}] = x_{2}$  
and $\mathfrak{s}_{2,1}$ in \v{S}nobl and Winternitz's classification, which is given by 
$[ e_{2}, e_{1}] = e_{1}$.  Clearly they are isomorphic by 
\begin{equation}
\begin{array}{llll}
x_{1} & \longleftrightarrow & e_{2} \\
x_{2}& \longleftrightarrow & e_{1}. 
\end{array}
\end{equation}

\subsection{Dimension $3$}   We first identify the three-dimensional subalgebras of $\soo$ with respect de Graaf's classification \cite{degraafa}. For each algebra of de Graaf's classification that appears, we then identify which algebra it is isomorphic to  in the classification described by \v{S}nobl and Winternitz in \cite{levi}.

To see that $\langle \algt, X_{\alpha} \rangle \sim L^{3}_{0}$,  let 
\begin{equation} 
x_{1} =T_{1,0} + X_{\alpha}, \quad
x_{2} = X_{\alpha} , \quad 
x_{3} = \frac{1}{2}T_{0,1}, 
\end{equation}
and observe that these assignments respect the structure of $L^{3}_{0}$,  whose
only nonzero brackets are 
$[x_{3}, x_{1}] = x_{2}$;  $[x_{3}, x_{2}] = x_{2}$.   
 
To see that $\langle \algt, X_{\beta} \rangle \sim L^{3}_{0}$,  let 
\begin{equation} 
x_{1} =T_{1,1} + X_{\beta}, \quad
x_{2} = X_{\beta},  \quad
x_{3} = T_{1,0}, 
\end{equation}
and observe that these elements also respect the structure of $L^{3}_{0}$. 
\smallskip

\begin{lemma}\label{lemma:alg3}
Consider the algebra $\alga$ with basis $\{ T, A, B \}$ and relations 
$[A,B] =0$, $[T,A] = 2A$ and $[T,B] = rB$, for some $r \in \Cb$.  
If $r \ne -2$,  let $b = - \frac{2r}{(r+2)^{2}}$.  

Then 
 \begin{numcases}  {\alga \cong}
\label{lem.alg3.L3b}  L^{3}_{b},  &{\it if} \,\, $r \ne  \pm 2$, \\ 
\label{lem.alg3.L2}  L^{2},  & {\rm if} \,\, $r = 2$, \\   
\label{lem.alg3.L41}  L^{4}_{1}, & {\rm if} \,\, $r = -2$. 
\end{numcases}
\end{lemma}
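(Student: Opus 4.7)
The plan is to construct, for each case, an explicit change of basis that realizes the stated target algebra. The key observation is that $\langle A, B \rangle$ is an abelian ideal on which $\ad(T)$ acts diagonally with eigenvalues $2$ and $r$; the isomorphism type of $\alga$ is therefore determined by how we rescale $T$ and pick a basis of the ideal to match the prescribed bracket relations.

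The two degenerate values of $r$ will be handled by direct assignment. For $r = 2$, set $x_3 = T/2$, $x_1 = A$, $x_2 = B$; then $[x_3, x_1] = x_1$ and $[x_3, x_2] = x_2$, matching $L^2$. For $r = -2$, set $x_3 = T/2$, $x_1 = A + B$, $x_2 = A - B$; a direct computation gives $[x_3, x_1] = x_2$ and $[x_3, x_2] = x_1$, matching $L^4_1$.

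For the generic case $r \neq \pm 2$, the plan is to match the matrix of $\ad(x_3)$ on $\langle x_1, x_2 \rangle$ against the one prescribed by $L^3_b$, namely
\begin{equation*}
\begin{pmatrix} 0 & b \\ 1 & 1 \end{pmatrix},
\end{equation*}
which has trace $1$ and determinant $-b$. The natural rescaling $x_3 = T/(r+2)$ makes $\ad(x_3)|_{\langle A,B\rangle}$ have trace $(2+r)/(r+2) = 1$ and determinant $2r/(r+2)^2$; matching determinants forces $b = -2r/(r+2)^2$, in agreement with the statement. To produce the off-diagonal form, take $x_1 = A + B$ (any vector outside the two eigenlines of $\ad(x_3)$ works) and define $x_2 = [x_3, x_1] = (2A + rB)/(r+2)$. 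Linear independence of $\{x_1, x_2\}$ will follow from the eigenvalues being distinct, which uses $r \neq 2$. The remaining relation $[x_3, x_2] = b x_1 + x_2$ then reduces to Cayley--Hamilton for the $2 \times 2$ matrix of $\ad(x_3)$, and can be checked by a one-line calculation.

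No step looks like a real obstacle; the only conceptual point is recognizing why the three cases are genuinely distinct. For $r = 2$ the scaled action $\ad(T/4)$ is a scalar, so taking $x_2 = [x_3, x_1]$ would produce a scalar multiple of $x_1$ and fail to yield a basis --- reflecting that $L^2$ has a diagonalizable action whereas $L^3_b$ at the corresponding value of $b$ does not. For $r = -2$ the rescaling $T/(r+2)$ is undefined, and $\ad(T)$ has trace $0$ on the ideal, which is incompatible with the trace $1$ required by the matrix form of $L^3_b$; this is precisely why an independent argument producing $L^4_1$ is needed in that case.
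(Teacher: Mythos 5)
Your proposal is correct and follows essentially the same route as the paper: the degenerate cases $r=\pm 2$ use the identical basis changes, and in the generic case your choice $x_3 = T/(r+2)$, $x_1 = A+B$, $x_2 = [x_3,x_1]$ reproduces the paper's basis exactly (the paper solves two scalar equations for $k$ and $b$ where you normalize the trace to $1$ and read off $b$ from the determinant, with Cayley--Hamilton replacing the paper's direct verification of $[x_3,x_2]=bx_1+x_2$). The only difference is this slightly more conceptual packaging of the same computation.
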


\begin{proof}
Fix $k \in \Cb$, $k \ne 0$.

If we let $x_{1} = A+B$,  $x_{2} = 2kA + rkB$, and $x_{3} = kT$, 
then it is easy to check that 
$[x_{3} , x_{1} ] = x_{2}$,
$[x_{3}, x_{2}] = 4k^{2} A + r^{2} k^{2} B$, 
$[x_{1}, x_{2}] = 0$.

If we want these elements to satisfy the additional condition 
$[x_{3}, x_{2}] = bx_{1} + x_{2}$, for some $0 \ne b \in \Cb$,  this forces
\begin{equation}
4k^{2} = 2k+b,  \quad r^{2} k^{2} = rk + b. 
\end{equation}
From these equations, it is easy to find that, provided $r \ne \pm 2$,   
\begin{equation}\label{DefnOfbk}
k = \frac{1}{r+2} , \quad b = - \frac{2r}{(r+2)^{2}}.
\end{equation}
So given $r \ne  \pm 2$,  and $k$ and $b$ defined as in Eq. \eqref{DefnOfbk},  the algebra $\alga$ is given by the relations
\begin{equation}
[x_{3} , x_{1} ] = x_{2}, \quad 
[x_{3}, x_{2}] =  bx_{1} + x_{2}, \quad 
[x_{1}, x_{2}] = 0.
\end{equation}
These are the relations for the algebra $L^{3}_{b}$.  

If $r = 2$,  let 
$x_{1} = A$,
$x_{2} = B$,
$x_{3} = \frac{1}{2} T$.
Then 
\begin{equation}
[x_{3}, x_{1}] = x_{1}, \quad 
[x_{3}, x_{2}] = x_{2},
\end{equation}
and these are the defining relations for the algebra $L^{2}$.

If $r = -2$,  let 
$x_{1} = A+B$,
$x_{2} = A-B$,
$x_{3} = \frac{1}{2} T$.
Then 
\begin{equation}
[x_{3}, x_{1}] = x_{2}, \quad 
[x_{3}, x_{2}] = x_{1},
\end{equation}
and these are the defining relations for the algebra $L^{4}_{1}$. 
\end{proof}

We apply the remarks about the above algebra $\alga$ to various examples.  
\vspace{0.02mm}

Fix $a \ne 0, \pm 1$,  and consider 
the algebra 
$\langle T_{a,1},  X_{\alpha}, X_{\alpha + \beta} \rangle$. 
Letting 
$T = T_{a,1}$,  $A = X_{\alpha}$,  $B = X_{\alpha + \beta}$,  we find that 
$\langle T_{a,1},  X_{\alpha}, X_{\alpha + \beta} \rangle$ is an example of 
the algebra $\alga$ of Lemma \ref{lemma:alg3},  with $r = a+1$.   
Letting 
$b =   - \frac{2r}{(r+2)^{2}} =  -2 \frac{a+1}{(a+3)^{2}}$,  the lemma tells us (c.f.,  Eq. \eqref{lem.alg3.L3b})  that 
$\langle T_{a,1},  X_{\alpha}, X_{\alpha + \beta} \rangle \sim L^{3}_{b}$,  
provided $r \ne \pm 2$,  i.e.,  $a \ne -3$  (note that $a=1$ is excluded by hypothesis).  It also tells us (c.f., Eq. \eqref{lem.alg3.L41})  that 
$\langle T_{a,1},  X_{\alpha}, X_{\alpha + \beta} \rangle \sim L^{4}_{1}$, if $a = -3$. 
\smallskip

Now fix $a \ne 0, \pm 1$,  and consider 
the algebra 
$\langle T_{a,1},  X_{\alpha}, X_{\alpha + 2\beta} \rangle$. 
Letting 
$T = T_{a,1}$,  $A = X_{\alpha}$,  $B = X_{\alpha + 2 \beta}$,  we find that 
$\langle T_{a,1},  X_{\alpha}, X_{\alpha + 2 \beta} \rangle$ is an example of 
the algebra $\alga$ of Lemma \ref{lemma:alg3},  with $r = 2a$.   
Letting $b = - \frac{2r}{(r+2)^{2}} = - \frac{a}{(a+1)^{2}}$,  we find that 
$\langle T_{a,1},  X_{\alpha}, X_{\alpha + 2 \beta} \rangle \cong L^{3}_{b} $.    
\smallskip

Letting $T = T_{3,1}$, $A = X_{\alpha}+ X_{\beta}$,  $B = X_{\alpha+ 2\beta}$, 
we find an example of 
the algebra $\alga$ of Lemma \ref{lemma:alg3},  with $r = 6$.   We conclude that
$\langle T_{3,1} , X_{\alpha}+ X_{\beta},  X_{\alpha+ 2\beta} \rangle
\cong L^{3}_{-3/16}$.
\smallskip

Next,  letting $x_{1}  = X_{\alpha} + X_{\alpha + \beta}$,  
$x_{2}  =  X_{\alpha + \beta}$,  
 $x_{3} = T_{1,0}$,
 we find that $[x_{3}, x_{1}] = x_{2}$, 
 $[x_{3}, x_{2}] = x_{2}$, $[x_{1}, x_{2}] = 0$,  which are the defining relations for $L^{3}_{0}$.
 So 
$\langle  T_{1,0}, X_{\alpha}, X_{\alpha + \beta} \rangle \cong L^{3}_{0}$.  
\smallskip

Similarly,  with 
$x_{1}  = X_{\alpha} + X_{\alpha + 2 \beta}$,  
$x_{2}  =  X_{\alpha + 2 \beta}$,  
 $x_{3} = \frac{1}{2} T_{1,0}$,
 we find that $[x_{3}, x_{1}] = x_{2}$, 
 $[x_{3}, x_{2}] = x_{2}$, $[x_{1}, x_{2}] = 0$,  which are the defining relations for $L^{3}_{0}$.
Accordingly,  
$\langle  T_{1,0}, X_{\alpha}, X_{\alpha + 2 \beta} \rangle \cong L^{3}_{0}$. 
 \smallskip

Letting $T = T_{1,0}$, $A = X_{\alpha+ 2 \beta}$,  $B = X_{\alpha+ \beta}$, 
we find an example of 
the algebra $\alga$ of Lemma \ref{lemma:alg3},  with $r = 1$.   We conclude that
$\langle T_{1,0},  X_{\alpha+ 2 \beta},  X_{\alpha+ \beta} \rangle
\cong L^{3}_{-2/9}$.
\smallskip

Next,  letting $x_{1}  = X_{\alpha + \beta} + X_{\alpha + 2 \beta}$,  
$x_{2}  =  X_{\alpha + 2 \beta}$,  
 $x_{3} = \frac{1}{2}  T_{1,-1}$,
 we find that $[x_{3}, x_{1}] = x_{2}$, 
 $[x_{3}, x_{2}] = x_{2}$, $[x_{1}, x_{2}] = 0$,  which are the defining relations for $L^{3}_{0}$.
 So 
$\langle  T_{1,-1}, X_{\alpha + \beta}, X_{\alpha +2 \beta} \rangle \cong L^{3}_{0}$.  
\smallskip

Consider the algebra 
$\langle T_{1,-1},  X_{\alpha}, X_{\alpha + 2\beta} \rangle$. 
Letting 
$T = T_{1,-1}$,  $A =  X_{\alpha + 2 \beta}$,  $B = X_{\alpha}$,  we find that 
$\langle T_{1,-1},  X_{\alpha}, X_{\alpha + 2 \beta} \rangle$ is an example of 
the algebra $\alga$ of Lemma \ref{lemma:alg3},  with $r = -2$.   
From Eq. \eqref{lem.alg3.L41}, we find that 
$\langle T_{1,-1},  X_{\alpha}, X_{\alpha + 2\beta} \rangle \cong L^{4}_{1}$.
\smallskip

Consider the algebra 
$\langle T_{1,-1},  X_{\beta}, X_{\alpha + 2\beta} \rangle$. 
Letting 
$T = T_{1,-1}$,  $A = X_{\beta}$,  $B =  X_{\alpha + 2 \beta}$,  we find that 
$\langle T_{1,-1},  X_{\beta}, X_{\alpha + 2 \beta} \rangle$ is an example of 
the algebra $\alga$ of Lemma \ref{lemma:alg3},  with $r = 2$.   
From Eq. \eqref{lem.alg3.L2}, we find that 
$\langle T_{1,-1},  X_{\beta}, X_{\alpha + 2\beta} \rangle \cong L^{2}$.
\smallskip

Similarly, consider  
$\langle T_{1,1},  X_{\alpha}, X_{\alpha + 2\beta} \rangle$. 
Letting 
$T = T_{1,1}$,  $A = X_{\alpha}$,  $B =  X_{\alpha + 2 \beta}$,  we find that 
$\langle T_{1,1},  X_{\alpha}, X_{\alpha + 2 \beta} \rangle$ is an example of 
the algebra $\alga$ of Lemma \ref{lemma:alg3},  with $r = 2$.   
From Eq. \eqref{lem.alg3.L2}, we find that 
$\langle T_{1,1},  X_{\alpha}, X_{\alpha + 2\beta} \rangle \cong L^{2}$.

 \smallskip

If we let 
$x_{1} = 2 (X_{\alpha + \beta} -  X_{\alpha+2\beta})$, 
$x_{2} = X_{\alpha+\beta} $, 
and $x_{3} = \frac{1}{4} (T_{1,1} + X_{\beta})$,  
then we find that 
$[x_{3}, x_{1} ] =  x_{2}$,
$[x_{3}, x_{2} ] =  - \frac{1}{4} x_{1} + x_{2}$,
$[x_{1}, x_{2} ] =  0$.
So  $\langle T_{1,1} + X_{\beta}, X_{\alpha+\beta} , X_{\alpha+2\beta} \rangle \cong L^{3}_{-1/4}$.  
\smallskip

Consider the algebra 
$\langle T_{1,-1} + X_{\alpha + \beta},  X_{\alpha}, X_{\alpha + 2 \beta} \rangle$. 
Letting 
$T = T_{1,-1} + X_{\alpha + \beta}$,  $A =  X_{\alpha + 2 \beta}$,  $B = X_{\alpha}$,  we find that 
$\langle T_{1,-1} + X_{\alpha + \beta},  X_{\alpha}, X_{\alpha + 2 \beta} \rangle$ is an example of 
the algebra $\alga$ of Lemma \ref{lemma:alg3},  with $r = -2$.   
From Eq. \eqref{lem.alg3.L41}, we find that 
$\langle T_{1,-1} + X_{\alpha + \beta},  X_{\alpha}, X_{\alpha + 2\beta} \rangle \cong L^{4}_{1}$.
\smallskip

Letting $T = T_{1,0} + X_{\alpha}$, $A = X_{\alpha+ 2 \beta}$,  $B = X_{\alpha+ \beta}$, 
we find an example of 
the algebra $\alga$ of Lemma \ref{lemma:alg3},  with $r = 1$.   We conclude that
$\langle T_{1,0} + X_{\alpha},  X_{\alpha+  \beta},  X_{\alpha + 2 \beta} \rangle
\cong L^{3}_{-2/9}$.
\smallskip

To see that $\langle  X_{\beta}, X_{\alpha +\beta}, X_{\alpha + 2 \beta}  \rangle \sim L^{4}_{0}$,  let 
\begin{equation} 
x_{1} = X_{\alpha +\beta}, \quad 
x_{2} = X_{\alpha + 2 \beta},  \quad 
x_{3} =  \frac{1}{2} X_{\beta}, 
\end{equation} 
and observe that these elements generate an algebra isomorphic to $L^{4}_{0}$,  whose
only nonzero bracket is 
$[x_{3}, x_{1}] = x_{2}$.

To see that $\langle  X_{\alpha} + X_{\beta}, X_{\alpha +\beta}, X_{\alpha + 2 \beta}  \rangle \sim L^{4}_{0}$,  let 
\begin{equation} 
x_{1} = X_{\alpha +\beta},   \quad 
x_{2} = X_{\alpha + 2 \beta},  \quad 
x_{3} =  \frac{1}{2} (X_{\alpha} + X_{\beta}), 
\end{equation}
and observe that these elements generate an algebra isomorphic to $L^{4}_{0}$.

The algebra 
$L^{1}$ in de Graaf's classification 
is the abelian algebra. 
The algebra 
 $L^{2}$ in de Graaf's classification is given by 
 $[x_{3}, x_{1}] = x_{1}$, 
  $[x_{3}, x_{2}] = x_{2}$. 
It is isomorphic to $\mathfrak{s}_{3,1}$ in \v{S}nobl and Winternitz's classification, with the parameter 
having the value $A=1$.  
The algebra $\mathfrak{s}_{3,1}$ is given by  
$[e_{3}, e_{1}] = e_{1}$,  
$[e_{3}, e_{2}] =  A e_{2}$,  subject to the following conditions:
\begin{equation}\label{Eq:ParameterConditionsSW}
 0 < |A|  \le 1,  ~\text{and if}~ |A| = 1, ~\text{text then}~ 0 \le \arg (A) \le \pi.  
 \end{equation}
The isomorphism is just $x_{i} \longleftrightarrow e_{i}$,  for $i = 1 , 2 , 3$. 

The algebra 
$L^{3}_{0}$ in de Graaf's classification 
is isomorphic to $\mathfrak{n}_{1,1} \oplus \mathfrak{s}_{2,1}$ in \v{S}nobl and Winternitz's classification, with the parameter 
having the value $A=0$.  
The isomorphism is given by 
\begin{equation}
\begin{array}{llll}
x_{1} - x_{2}& \longleftrightarrow & (e_{1},0)  \\
x_{3}& \longleftrightarrow &(0, e_{2})  \\
x_{2}& \longleftrightarrow &(0, e_{1}). 
\end{array}
\end{equation}

The algebra 
$L^{3}_{-1/4}$ in de Graaf's classification 
is isomorphic to $\mathfrak{s}_{3,2}$ in \v{S}nobl and Winternitz's classification, which is 
given by 
$[e_{3}, e_{1}] = e_{1}$,  
$[e_{3}, e_{2}] = e_{1} + e_{2}$.  
  The 
correspondence is given by 
\begin{equation}
\begin{array}{llll}
 2x_{3} & \longleftrightarrow &e_{3}  \\
 x_{1} - 2x_{2} & \longleftrightarrow  & e_{1}  \\
 x_{1} - 4x_{2}& \longleftrightarrow & e_{2}. 
\end{array}
\end{equation}

The algebra 
 $L^{4}_{0}$ in de Graaf's classification is given by 
$[x_{3}, x_{1}] = x_{2}$.
It is isomorphic to $\mathfrak{n}_{3,1}$ in \v{S}nobl and Winternitz's classification, 
which is given by
$[e_{2}, e_{3}] = e_{1}$.  The isomorphism is 
\begin{equation}
\begin{array}{llll}
 x_{1} & \longleftrightarrow &e_{3}  \\
x_{2} & \longleftrightarrow  & e_{1}  \\
 x_{3} & \longleftrightarrow & e_{2}. 
\end{array}
\end{equation}

The algebra 
 $L^{4}_{1}$ in de Graaf's classification is given by 
$[x_{3}, x_{1}] = x_{2}$,
$[x_{3}, x_{2}] =  x_{1}$.  It  
is isomorphic to $\mathfrak{s}_{3,1}$ in \v{S}nobl and Winternitz's classification, with the parameter 
having the value $A= -1$.  
The isomorphism is 
\begin{equation}
\begin{array}{llll}
 x_{3} & \longleftrightarrow &e_{3}  \\
x_{1} + x_{2} & \longleftrightarrow  & e_{1}  \\
x_{1} - x_{2}  & \longleftrightarrow & e_{2}. 
\end{array}
\end{equation}

In de Graaf's classification,  $L^{3}_{\alpha}$ is given by 
$[x_{3}, x_{1}] = x_{2}$,
$[x_{3}, x_{2}] = \alpha x_{1} + x_{2}$.


\begin{lemma}\label{SWinverseconditions}
Suppose $\alpha \ne 0, - \frac{1}{4}$. Then there is one choice of the square root 
$\sqrt{1 + 4 \alpha}$ for which the complex number
\begin{equation}
\lambda = \frac{1+ 2 \alpha + \sqrt{1 + 4 \alpha}}{-2 \alpha}.
\end{equation}
satifies the conditions of Eq. \eqref{Eq:ParameterConditionsSW}.
\end{lemma}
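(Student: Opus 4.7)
The plan is to exploit the observation that the two values of $\lambda$ (corresponding to the two choices of the square root) are multiplicative inverses of one another. The first step is the identity
\[
\lambda_+\cdot \lambda_- = \frac{(1+2\alpha)^2 - (1+4\alpha)}{(-2\alpha)^2} = \frac{4\alpha^2}{4\alpha^2} = 1,
\]
which is valid whenever $\alpha\neq 0$. Consequently $|\lambda_+|\cdot|\lambda_-|=1$, so either both values have modulus $1$, or exactly one of them has modulus strictly less than $1$ while the other has modulus strictly greater than $1$.

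Before the main case split I would dispose of the degenerate values $\lambda\in\{0,1,-1\}$. The possibility $\lambda=0$ would force $(1+2\alpha)^2 = 1+4\alpha$, i.e., $4\alpha^2=0$, contradicting $\alpha\neq 0$. The possibility $\lambda = 1$ would force $\lambda_+=\lambda_-=1$ (since $\lambda_+\lambda_-=1$), which requires the discriminant $1+4\alpha$ to vanish, i.e., $\alpha=-1/4$, excluded by hypothesis. Finally, $\lambda=-1$ would force $\lambda_++\lambda_- = -(1+2\alpha)/\alpha = -2$, which has no solution in $\alpha$.

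Now the main argument splits into two cases. If $|\lambda_+|\neq|\lambda_-|$, I choose the unique root with modulus strictly less than $1$; by the preceding paragraph it is nonzero, so it lies in the punctured open unit disc and automatically satisfies $0<|\lambda|\leq 1$, which fulfils the conditions of Eq.~\eqref{Eq:ParameterConditionsSW}. If instead $|\lambda_+|=|\lambda_-|=1$, then $\lambda_- = 1/\lambda_+ = \overline{\lambda_+}$, so the two values are distinct complex conjugates on the unit circle, and by the preceding paragraph neither of them equals $\pm 1$. Exactly one of them therefore has argument in $(0,\pi)$, and I choose that one.

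The argument is elementary; the only step requiring any real insight is the product identity $\lambda_+\lambda_-=1$, which reflects the fact that the $\mathbb{Z}/2$ ambiguity coming from the square root corresponds exactly to the involution $A\leftrightarrow A^{-1}$ already absorbed into the \v{S}nobl--Winternitz normalization convention on the parameter $A$ of $\mathfrak{s}_{3,1}$. Apart from this observation the proof is routine bookkeeping, and I expect no genuine obstacle beyond checking that the excluded values $\lambda\in\{0,\pm 1\}$ correspond to the excluded parameters $\alpha\in\{0,-1/4\}$.
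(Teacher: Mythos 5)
Your proof is correct and takes essentially the same route as the paper's: both rest on the observation that the two choices of square root yield reciprocal values of $\lambda$ (the paper writes $\lambda=\lambda^{+}/\lambda^{-}$ with $\lambda^{\pm}=\tfrac{1\pm\sqrt{1+4\alpha}}{2}$ and notes that switching the root swaps numerator and denominator, while you compute $\lambda_{+}\lambda_{-}=1$ directly), after which one member of the reciprocal pair must satisfy the normalization of Eq.~\eqref{Eq:ParameterConditionsSW}. Your write-up is somewhat more careful than the paper's one-line assertion, since you explicitly dispose of the degenerate values $\lambda\in\{0,\pm1\}$ and treat the unit-circle case via complex conjugation, but the underlying idea is identical.
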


\begin{proof}
Fix one value of the square root and
let $\lambda^{+} = \frac{1 + \sqrt{1 + 4 \alpha}}{2}$,  $\lambda^{-} = \frac{1 - \sqrt{1 + 4 \alpha}}{2}$.  Then 
$ \lambda^{+}  \lambda^{-} = - \alpha$,  and either 
$ \frac{ \lambda^{+}}{\lambda^{-}}$ or $ \frac{ \lambda^{-}}{\lambda^{+}}$ satisfies the conditions of 
Eq. \eqref{Eq:ParameterConditionsSW}.  If the former,  
the initial choice of square root is correct.  If the latter,  then 
make the other choice of square root.  Either way,  then 
$\lambda = \frac{ \lambda^{+}}{\lambda^{-}}$ satisfies the conditions. 
\end{proof}

If $\alpha \ne 0, - \frac{1}{4}$, then $L^{3}_{\alpha} \cong \mathfrak{s}_{3,1, A=\lambda}$, with 
parameter
\begin{equation}
\lambda = \frac{1+ 2 \alpha + \sqrt{1 + 4 \alpha}}{-2 \alpha}
\end{equation}
as described in Lemma \ref{SWinverseconditions}.  

The isomorphism is given by 
\begin{equation}
\begin{array}{llll}
- \frac{\lambda^{+}}{\alpha} x_{3} & \longleftrightarrow &e_{3}  \\
 \alpha x_{1} + \lambda^{+} x_{2} & \longleftrightarrow  & e_{2}  \\
 \alpha x_{1} + \lambda^{-} x_{2} & \longleftrightarrow & e_{1},  
\end{array}
\end{equation}
where $\lambda^{+}$ and $\lambda^{-}$ are as in the proof of 
Lemma \ref{SWinverseconditions}.  Note that $\lambda^{+} + \lambda^{-} = 1$. 


 \subsection{Dimension $4$}\label{4dimequivalences} We first identify the four-dimensional subalgebras of $\soo$ with respect de Graaf's classification \cite{degraafa}. For each algebra of de Graaf's classification that appears, we then identify which algebra it is isomorphic to  in the classification described by \v{S}nobl and Winternitz in \cite{levi}.

To see that $  \langle \algt , X_{\alpha}, X_{\alpha + \beta} \rangle \sim M^{8}$, let
\begin{equation} 
x_{1} = T_{-1/2,1/2},  \quad 
x_{2} = X_{\alpha}, \quad 
x_{3} = T_{1,0},  \quad 
x_{4} = X_{\alpha + \beta},
\end{equation}	
and observe that these elements generate an algebra isomorphic to $M^{8}$,  whose
only nonzero brackets are 
$[x_{1}, x_{2}] =  x_{2}$ and  $[x_{3}, x_{4}] = x_{4}$.

To see that $\langle \algt , X_{\alpha}, X_{\alpha + 2\beta} \rangle \sim M^{8}$, let
\begin{equation} 
x_{1} = T_{0,1/2},  \quad 
x_{2} = X_{\alpha},   \quad 
x_{3} = T_{1/2,0},  \quad 
x_{4} = X_{\alpha +2 \beta},
\end{equation}
and observe that these elements generate an algebra isomorphic to $M^{8}$.  
	
In $ \langle T_{a,1}, \algn_{\p} \rangle$,  $a \ne 0 \pm 1$,
let 
\begin{equation}
\begin{array}{llll}
x_{1} &=& \frac{9}{4}(a+1)^{2} X_{\alpha}  + 9 X_{\alpha + \beta} +   \frac{9}{4}\frac{(a+1)^{2}}{a^{2}}  X_{\alpha + 2 \beta} \\
x_{2} &=& \frac{3}{2}(a+1) X_{\alpha}  + 3 X_{\alpha + \beta} +   \frac{3}{2}\frac{(a+1)}{a}  X_{\alpha + 2 \beta} \\
x_{3} &=&  X_{\alpha}  + X_{\alpha + \beta} +   X_{\alpha + 2 \beta} \\
x_{4} &=& \frac{1}{3(a+1)} T_{a,1}. 
\end{array}
\end{equation} 
Then
$[x_{4}, x_{1}] = x_{2}$,
$[x_{4}, x_{2}] = x_{3}$,  and 
$[x_{4}, x_{3}] =     \frac{2}{3(a+1)} X_{\alpha} + \frac{1}{3} X_{\alpha + \beta}   + \frac{2a}{3(a+1)} X_{\alpha + 2 \beta}$.  

It is not difficult to check that $[x_{4}, x_{3}] 
= Ax_{1} + Bx_{2} + x_{3}$, 
where $A = \frac{4a}{27(a+1)^{2}}$ and
$B = - \frac{2(a^{2}+4a+1)}{9(a+1)^{2}}$.  
These are the nonzero defining relations for $M^{6}_{A,B}$,  and we find that 
 $ \langle T_{a,1}, \algn_{\p} \rangle \sim M^{6}_{A,B} =$\\ $M^{6}_{4a/(27(a+1)^{2}), - 2(a^{2}+4a+1)/(9(a+1)^{2})}$.
  
In   $\langle T_{a,1}, X_{\beta}, X_{\alpha + \beta}, X_{\alpha + 2 \beta} \rangle$, with $a \ne 0 \pm 1$,    
let 
$x_{1} = X_{\beta} + X_{\alpha + \beta}$, 
$x_{2} = \frac{8a}{a^{2}-1}  X_{\alpha + 2 \beta}$, 
$x_{3} = \frac{2a}{a-1} X_{\beta} + \frac{2a}{a+1} X_{\alpha+\beta}$, 
$x_{4} = \frac{1}{2a} T_{a,1}$.  
Then it is easy to verify that the only nonzero relations are 
$[x_{4}, x_{1}] = x_{1} + \frac{1-a^{2}}{4a^{2}} x_{3}$,
$[x_{4}, x_{2}] = x_{2}$,
$[x_{4}, x_{3}] = x_{1}$,
$[x_{3}, x_{1}] = x_{2}$.  
These are the defining relations for $M^{13}_{(1-a^{2})/(4a^{2})}$,  so 
$\langle T_{a,1}, X_{\beta}, X_{\alpha + \beta}, X_{\alpha + 2 \beta} \rangle \sim 
M^{13}_{(1-a^{2})/(4a^{2})}$.
  
In $ \langle T_{3,1}, X_{\alpha} + X_{\beta} , X_{\alpha + \beta}, X_{\alpha + 2 \beta} \rangle$, let
$x_{1} = X_{\alpha} + X_{\beta} + 2X_{\alpha + \beta}$, 
$x_{2} = 6 X_{\alpha + 2 \beta}$, 
$x_{3} = 3(X_{\alpha} + X_{\beta}) + 3 X_{\alpha+\beta}$, 
$x_{4} = \frac{1}{6} T_{3,1}$.  

Then it is easy to verify that the only nonzero relations are 
$[x_{4}, x_{1}] = x_{1} - \frac{2}{9} x_{3}$,
$[x_{4}, x_{2}] = x_{2}$,
$[x_{4}, x_{3}] = x_{1}$,
$[x_{3}, x_{1}] = x_{2}$.  
These are the defining relations for $M^{13}_{-2/9}$,  so 
$\langle T_{3,1}, X_{\alpha} + X_{\beta} , X_{\alpha + \beta}, X_{\alpha + 2 \beta} \rangle
\sim M^{13}_{-2/9}$. 

In $\langle T_{0,1}, \algn_{\p} \rangle$,   
let 
$x_{1} = \frac{9}{4} X_{\alpha}  +  9 X_{\alpha + \beta} +    X_{\alpha + 2 \beta} $,
$x_{2} = \frac{3}{2}   X_{\alpha}  + 3 X_{\alpha + \beta}  $,
$x_{3} =  X_{\alpha}  + X_{\alpha + \beta}  $,
$x_{4} = \frac{1}{3} T_{0,1}$. 

Then
$[x_{4}, x_{1}] = x_{2}$,
$[x_{4}, x_{2}] = x_{3}$,  and 
$[x_{4}, x_{3}] =     \frac{2}{3} X_{\alpha} + \frac{1}{3} X_{\alpha + \beta}$.  

It is not difficult to check that $[x_{4}, x_{3}] 
= - \frac{2}{9} x_{2} +x_{3}$. 
These are the nonzero defining relations for $M^{6}_{0,-2/9}$, so 
$\langle T_{0,1}, \algn_{\p} \rangle \sim M^{6}_{0,-2/9}$.    

In $\langle T_{0,1}, X_{\beta}, X_{\alpha + \beta}, X_{\alpha + 2 \beta} \rangle$, let 
 $x_{1} = - X_{\beta} + X_{\alpha + \beta}$, 
 $x_{2} =  4 X_{\alpha + 2 \beta}$,
  $x_{3} = X_{\beta} + X_{\alpha + \beta}$,
 $x_{4} = T_{0,1}$.  
 Then 
 $[x_{4}, x_{1}] = x_{3}$, 
  $[x_{4}, x_{3}] =  x_{1}$,  and 
 $[x_{3}, x_{1}] = x_{2}$.  These are the nonzero relations for $M^{14}_{1}$,  so we find that     
 $\langle T_{0,1}, X_{\beta}, X_{\alpha + \beta}, X_{\alpha + 2 \beta} \rangle \sim M^{14}_{1}$. 

 In $\langle T_{1,0}, X_{\beta}, X_{\alpha + \beta}, X_{\alpha + 2 \beta} \rangle$,  let  
 $x_{1} = X_{\alpha + \beta}$, 
 $x_{2} = X_{\alpha + 2 \beta}$,
  $x_{3} = \frac{1}{2} X_{ \beta}$,
 $x_{4} = T_{1,0}$.  
 Then 
 $[x_{4}, x_{1}] = x_{1}$, 
  $[x_{4}, x_{2}] = 2 x_{2}$, 
   $[x_{4}, x_{3}] = x_{3}$, and 
 $[x_{3}, x_{1}] = x_{2}$.  These are the nonzero relations for $M^{12}$,  so we find that     
 $\langle T_{1,0}, X_{\beta}, X_{\alpha + \beta}, X_{\alpha + 2 \beta} \rangle 
 \sim M^{12}$. 
 
 In $\langle T_{1,1}, \algn_{\p} \rangle$,
 let $x_{1} = X_{\alpha}$, 
  $x_{2} = X_{\alpha +\beta}$, 
   $x_{3} = X_{\alpha+ 2 \beta}$, 
    $x_{4} = \frac{1}{2} T_{1,1}$.
Then the nonzero relations are 
$[x_{4} , x_{1}] = x_{1}$,
$[x_{4} , x_{2}] = x_{2}$,
$[x_{4} , x_{3}] = x_{3}$,      
showing that $\langle T_{1,1}, \algn_{\p} \rangle \sim M^{2}$. 

In 
$\langle T_{1,-1}, \algn_{\p} \rangle$ 
let 
$x_{1} =  X_{\alpha}  +   X_{\alpha + \beta} +    X_{\alpha + 2 \beta} $,
$x_{2} =  X_{\alpha}  - X_{\alpha + 2 \beta}  $,
$x_{3} =  X_{\alpha}  + X_{\alpha + 2 \beta}  $,
$x_{4} = \frac{1}{2} T_{1,-1}$. 

Then
$[x_{4}, x_{1}] = x_{2}$,
$[x_{4}, x_{2}] = x_{3}$,  and 
$[x_{4}, x_{3}] =      X_{\alpha} - X_{\alpha + 2 \beta}  = x_{2} $. 
These are the nonzero defining relations for $M^{7}_{0,1}$, so 
$\langle T_{1,-1}, \algn_{\p} \rangle \sim M^{7}_{0,1}$.    

In $\langle T_{1,1}, X_{\beta}, X_{\alpha + \beta}, X_{\alpha + 2 \beta} \rangle$, let
$x_{1} = X_{\alpha + \beta}$, 
$x_{2} = X_{\alpha + 2 \beta}$,
$x_{3} = \frac{1}{2} X_{\beta} + X_{\alpha + \beta}$, 
$x_{4} = \frac{1}{2} T_{1,1}$.  Then
$[x_{4}, x_{1} ] = x_{1}$, 
$[x_{4}, x_{2} ] = x_{2}$, 
$[x_{4}, x_{3} ] = x_{1}$, and 
$[x_{3}, x_{1} ] = x_{2}$, 
which are the nonzero relations for $M^{13}_{0}$.  We conclude that  
$\langle T_{1,1}, X_{\beta}, X_{\alpha + \beta}, X_{\alpha + 2 \beta} \rangle \sim M^{13}_{0}$.  

In $\langle T_{1,1} + X_{\beta}, X_{\alpha}, X_{\alpha+ \beta}, X_{\alpha+ 2\beta} \rangle$,  let
$x_{1} = 54 X_{\alpha}$, 
$x_{2} = 18 X_{\alpha} + 9 X_{\alpha + \beta}$, 
$x_{3} = 6 X_{\alpha} + 6 X_{\alpha + \beta} + 3 X_{\alpha + 2 \beta}$, 
$x_{4} = \frac{1}{6} (T_{1,1} + X_{\beta})$.  
Then
$[x_{4}, x_{1}] = x_{2}$,  $[x_{4}, x_{2}] = x_{3}$, 
and  $[x_{4}, x_{3}] = 2X_{\alpha} + 3  X_{\alpha + \beta} + 3 X_{\alpha + 2 \beta}$, 
which equals $\frac{1}{27} x_{1} - \frac{1}{3} x_{2} + x_{3}$.  
These are the only nonzero relations,  so we find that
$\langle T_{1,1} + X_{\beta}, X_{\alpha}, X_{\alpha+ \beta}, X_{\alpha+ 2\beta} \rangle
\sim M^{6}_{1/27, -1/3}$. 

In $\langle  T_{1,0} + X_{\alpha} , X_{\beta},  X_{\alpha+ \beta}, X_{\alpha+ 2\beta} \rangle$, let
$x_{1} = - \frac{1}{2} X_{\beta} + \frac{1}{2} X_{\alpha + \beta} $, 
$x_{2} = - X_{\alpha + 2 \beta} $, 
$x_{3} = - X_{\beta}$, 
$x_{4} =  \frac{1}{2} (T_{1,0} + X_{\alpha}) $.  Then
$[x_{4}, x_{1}] = x_{1} - \frac{1}{4} x_{3}$,  $[x_{4}, x_{2}] = x_{2}$, 
and  $[x_{4}, x_{3}] = x_{1}$,  and
$[x_{3}, x_{1}] = x_{2}$.  
These are the nonzero relations for $M^{13}_{-1/4}$.  We conclude that  
$\langle  T_{1,0} + X_{\alpha} , X_{\beta},  X_{\alpha+ \beta}, X_{\alpha+ 2\beta} \rangle \sim M^{13}_{-1/4}$.

In $ \algn$,  let 
$x_{1} = X_{\alpha}$, 
$x_{2} = X_{\alpha + \beta}$, 
$x_{3} = 2 X_{\alpha + 2 \beta}$, 
$x_{4} = X_{\beta}$.
Then 
$[x_{4}, x_{1}] = x_{2}$ and  $[x_{4}, x_{2}] = x_{3}$ 
are the only nonzero relations,  and we find that
$\algn \sim M^{7}_{0,0}$.

The algebra 
$M^{2}$ in de Graaf's classification is given by 
$[x_{4}, x_{1}] = x_{1}$, $[x_{4}, x_{2}] = x_{2}$, $[x_{4}, x_{3}] = x_{3}$.  

It is isomorphic to  $\mathfrak{s}_{4,3}$ with parameters $A= B =1$ 
in \v{S}nobl and Winternitz's classification,  which is given by    
$[e_{4}, e_{1}] = e_{1}$,
$[e_{4}, e_{2}] =  e_{2}$,
$[e_{4}, e_{3}] = e_{3}$.
The isomorphism is 
$x_{i} \longleftrightarrow  e_{i} $,  for $i = 1, \dots, 4$.

The algebra 
$M^{8}$ in de Graaf's classification is given by 
$[x_{1}, x_{2}] = x_{2}$, $[x_{3}, x_{4}] = x_{4}$.  It is isomorphic to $K^{2} \oplus K^{2}$,  so it 
is isomorphic to $\mathfrak{s}_{2,1} \oplus \mathfrak{s}_{2,1}$ in \v{S}nobl and Winternitz's classification, 
which is also isomorphic to $\mathfrak{s}_{4,12}$ over $\Cb$.    

The algebra 
$M^{6}_{\alpha, \beta}$ in de Graaf's classification is given by 
$[x_{4}, x_{1}] = x_{2}$,  
$[x_{4}, x_{2}] = x_{3}$,  
$[x_{4}, x_{3}] = \alpha x_{1} + \beta x_{2} +x_{3}$.   

This description of $M^{6}_{\alpha,\beta}$ shows that its nilradical $\mathfrak {n}$ is 
abelian,  with basis $\{ x_{1}, x_{2}, x_{3} \}$.  The action of $x_{4}$ on $\mathfrak {n}$ 
is given,  relative to this basis, by the matrix 
\begin{equation}
C = \begin{pmatrix}
0&0& 
 \beta 
\\
1 & 0 &
 \alpha
\\
0&1&1 
\end{pmatrix}.\end{equation} This is a companion matrix,  with characteristic polynomial 
$\lambda^{3}   -   \lambda^{2}    - \alpha   \lambda  -  \beta$.

Since the roots add up to $1$,  the only way there could be a single root of 
multiplicity $3$ would be for it to be $\frac{1}{3}$.  This happens with 
$ \beta  = \frac{1}{27}$,
 $ \alpha  = -\frac{1}{3}$,
and it is not difficult to see that in this case, the Jordan Canonical Form of $C$ 
has a single block. In particular,  ${\rm ad}(3x_{4})$ has an eigenvector $u_{1} \in \mathfrak{n}$ 
with eigenvalue $1$, and there are vectors $v,w \in \mathfrak{n}$ so that 
$[ 3x_{4}, v] = u_{1} +v$,
$[ 3x_{4}, w] = v + w$.

This algebra is isomorphic to 
$\mathfrak{s}_{4,2}  $, which is given by 
$[e_{4}, e_{1}] = e_{1}$,
$[e_{4}, e_{2}] = e_{1} + e_{2}$,
$[e_{4}, e_{3}] = e_{2} + e_{3}$. 
The isomorphism is given by 
\begin{equation}
\begin{array}{llll}
3 x_{4}  & \longleftrightarrow &e_{4}  \\
u_{1} & \longleftrightarrow & e_{1}  \\
v & \longleftrightarrow  &  e_{2}  \\
w & \longleftrightarrow  &  e_{3}.  
\end{array}
\end{equation}

Depending on $\alpha$ and $\beta$,  the matrix $C$ may have three distinct 
eigenvalues,  two distinct eigenvalues with one of them associated to a $2 \times 2$ 
Jordan block,  or the single eigenvalue $\lambda = \frac{1}{3}$,  as discussed above. 

Suppose $ \beta  \ne 0$,  so that the eigenvalues are all nonzero.  

If there are three distinct (nonzero) eigenvalues,  we can assume they satisfy
$|r'| \ge |s'| \ge |t'| > 0$.  If $|r'| > |s'|$,  then $\frac{1}{r'} x_{4}$ has eigenvalues 
$1, s, t$,  with $1 > |s| \ge |t| > 0$.  If $|r'| = |s'| > |t'|$,  then 
$\frac{1}{r'} x_{4}$ has eigenvalues 
$1, s, t$,  with $1 = |s| \ge |t| > 0$ and we can assume $0 < {\rm arg}(s) < 2 \pi$.  
If $|r'| = |s'| = |t'|$,  then 
$\frac{1}{r'} x_{4}$ has eigenvalues 
$1, s, t$,  with $1 = |s| = |t| $ and we can assume $0 < {\rm arg}(s)  < {\rm arg}(t) < 2 \pi$.  
In each of these cases,  let us write $u_{1}, u_{s}, u_{t} \in \mathfrak{n}$ for the corresponding eigenvectors.  

In each of these cases,  $M^{6}_{\alpha,\beta} \cong \mathfrak{s}_{4,3}$, with parameters 
$A = s$, $B = t$.    
This algebra is given by 
$[e_{4}, e_{1}] = e_{1}$,
$[e_{4}, e_{2}] = A e_{2}$,
$[e_{4}, e_{3}] = B e_{3}$. 
The isomorphism is given by 
 \begin{equation}
\begin{array}{llll}
\frac{1}{r'} x_{4}  & \longleftrightarrow &e_{4}  \\
u_{1} & \longleftrightarrow & e_{1}  \\
u_{s} & \longleftrightarrow  &  e_{2}  \\
u_{t} & \longleftrightarrow  &  e_{3}.  
\end{array}
\end{equation}

If there are two distinct (nonzero) eigenvalues,  we can assume that $r'$ is associated 
to a $2 \times 2$ Jordan block and $s'$ is associated 
to a $1 \times 1$ Jordan block.  Then $\frac{1}{r'} x_{4}$ has an eigenvector $u_{s}$ with 
eigenvalue $s = \frac{s'}{r'}$,  an  eigenvector $u_{1}$ with 
eigenvalue $1$,  and a vector $v$ so that $[x_{4}, v] = u_{1} + v$.  
In this case, $M^{6}_{\alpha,\beta} \cong \mathfrak{s}_{4,4}$, with parameter 
$A= s$.  

This algebra is given by 
$[e_{4}, e_{1}] = e_{1}$,
$[e_{4}, e_{2}] = e_{1} + e_{2}$,
$[e_{4}, e_{3}] = s e_{3}$. 
The isomorphism is given by 
\begin{equation}
\begin{array}{llll}
\frac{1}{r'} x_{4}  & \longleftrightarrow &e_{4}  \\
u_{1} & \longleftrightarrow & e_{1}  \\
v & \longleftrightarrow  &  e_{2}  \\
u_{s} & \longleftrightarrow  &  e_{3}.  
\end{array}
\end{equation}

If $\beta  = 0$,  $ \alpha \ne  0, - \frac{1}{4}$, then $M^{6}_{0,  \alpha }$ is isomorphic to 
$\mathfrak{n}_{1,1} \oplus \mathfrak{s}_{3,1}$ in \v{S}nobl and Winternitz's classification, 
with parameter
\begin{equation}
A = 
\lambda = \frac{1+ 2  \alpha + \sqrt{1 + 4  \alpha }}{-2 \alpha }.
\end{equation}
With the choice of square root described in 
Lemma \ref{SWinverseconditions},  the 
isomorphism is given by 
\begin{equation}
\begin{array}{llll}
\alpha  x_{1} + x_{2} - x_{3} & \longleftrightarrow &(e_{1},0)  \\
- \frac{ 
\lambda^{+}
}{ \alpha } x_{4} & \longleftrightarrow &(0, e_{3})  \\
 \alpha  x_{2} + \lambda^{+} x_{3} & \longleftrightarrow  & (0, e_{2})  \\
 \alpha  x_{2} + \lambda^{-} x_{3} & \longleftrightarrow & (0, e_{1} ). 
\end{array}
\end{equation} 

If $\beta = 0$,  $\alpha = - \frac{1}{4}$, then $M^{6}_{0, -1/4}$ is isomorphic to 
$\mathfrak{n}_{1,1} \oplus \mathfrak{s}_{3,2}$ in \v{S}nobl and Winternitz's classification.  
The isomorphism is given by 
\begin{equation}
\begin{array}{llll}
x_{1} - 4 x_{2} + 4 x_{3} & \longleftrightarrow &(e_{1},0)  \\
2 x_{4} & \longleftrightarrow &(0, e_{3})  \\
2 x_{2} - 4 x_{3} & \longleftrightarrow  & (0, e_{1})  \\
-4 x_{3} & \longleftrightarrow & (0, e_{2} ). 
\end{array}
\end{equation}

We give some examples.

Suppose $\beta =  1$, $\alpha = - 1$.   The algebra $M^6_{1,-1}$ is isomorphic to 
$\mathfrak{s}_{4,3}$. 
The isomorphism is given by 
\begin{equation}
\begin{array}{llll}
x_{4}  & \longleftrightarrow &e_{4}  \\
x_{1} +  x_{3} & \longleftrightarrow & e_{1}  \\
x_{1} +(-1-i) x_{2} + i x_{3} & \longleftrightarrow  &  e_{2}  \\
x_{1} +(-1+i) x_{2} - i x_{3} & \longleftrightarrow  &  e_{3}.  
\end{array}
\end{equation}

Now suppose $\beta =  -1$, $\alpha = 1$.   The algebra $M^6_{-1, 1}$ is isomorphic to 
$\mathfrak{s}_{4,4}$.
The isomorphism is given by 
\begin{equation}
\begin{array}{llll}
x_{4}  & \longleftrightarrow &e_{4}  \\
x_{1} -  x_{3} & \longleftrightarrow & e_{1}  \\
- \frac{1}{2} x_{1} - x_{2} - \frac{1}{2}  x_{3} & \longleftrightarrow  &  e_{2}  \\
x_{1} -2 x_{2} + x_{3} & \longleftrightarrow  &  e_{3}.  
\end{array}
\end{equation}

The algebra 
$M^{13}_{\alpha}$ in de Graaf's classification is given by 
$[x_{4}, x_{1}] = x_{1} + \alpha x_{3}$,  
$[x_{4}, x_{2}] = x_{2}$,  
$[x_{4}, x_{3}] =  x_{1}$,   
$[x_{3}, x_{1}] =  x_{2} $.   

First consider the case in which $\alpha = 0$.  Then $M^{13}_{0}$ is isomorphic to  
$\mathfrak{s}_{4,11}$
in \v{S}nobl and Winternitz's classification,  which is given by  
$[e_{4}, e_{1}] = e_{1}$,
$[e_{4}, e_{2}] =  e_{2}$,
$[e_{2}, e_{3}] = e_{1}$.

The isomorphism is 
\begin{equation}
\begin{array}{llll}
x_{4} & \longleftrightarrow &e_{4}  \\
x_{1} & \longleftrightarrow &e_{2}  \\
x_{2} & \longleftrightarrow  & e_{1}  \\
x_{1} - x_{3} & \longleftrightarrow & e_{3}. 
\end{array}
\end{equation}

Now suppose $\alpha = - \frac{1}{4}$.  Then 
$M^{13}_{-1/4}$ is isomorphic to  
$\mathfrak{s}_{4,10}$
in \v{S}nobl and Winternitz's classification,  which is given by  
$[e_{4}, e_{1}] = 2e_{1}$,
$[e_{4}, e_{2}] =  e_{2}$,
$[e_{4}, e_{3}] =  e_{2} + e_{3}$,
$[e_{2}, e_{3}] = e_{1} $.

The isomorphism is 
\begin{equation}
\begin{array}{llll}
2x_{4} & \longleftrightarrow &e_{4}  \\
-2x_{2} & \longleftrightarrow &e_{1}  \\
2x_{1} - x_{3} & \longleftrightarrow  & e_{2}  \\
 x_{3} & \longleftrightarrow & e_{3}. 
\end{array}
\end{equation}

Now suppose $\alpha \ne 0, - \frac{1}{4}$.   Then $M^{13}_{\alpha}$ is isomorphic to  
$\mathfrak{s}_{4,8}$, with parameter $\lambda = \frac{1+ 2\alpha +\sqrt{1 + 4 \alpha}}{-2 \alpha}$ 
as described in Lemma \ref{SWinverseconditions}.
Note that, with choice of square root specified in that lemma and 
the notation given in the proof of that lemma, 
$(1+\lambda)\lambda^+=\lambda$
and $(1+\lambda)\lambda^-=1$. The isomorphism is 
\begin{equation}
\begin{array}{llll}
(1+\lambda)~x_{4} & \longleftrightarrow &e_{4}  \\
\sqrt{1+4 \alpha} ~x_{2} & \longleftrightarrow &e_{1}  \\
\lambda^{-}~x_{1} + \alpha x_{3} & \longleftrightarrow  & e_{2}  \\
\lambda^{+}~x_{1} + \alpha x_{3} & \longleftrightarrow  & e_{3}. 
\end{array}
\end{equation}

The algebra 
$M^{14}_{1}$ in de Graaf's classification is given by 
$[x_{4}, x_{1}] =  x_{3}$,  
$[x_{4}, x_{3}] = x_{1}$,   
$[x_{3}, x_{1}] =  x_{2} $.

It is isomorphic to  $\mathfrak{s}_{4,6}$ 
in \v{S}nobl and Winternitz's classification,  which is given by  
$[e_{4}, e_{2}] = e_{2}$,
$[e_{4}, e_{3}] = - e_{3}$,
$[e_{2}, e_{3}] = e_{1}$.

The isomorphism is 
\begin{equation}
\begin{array}{llll}
2x_{1} & \longleftrightarrow &e_{1}  \\
  x_{1} + x_{3} & \longleftrightarrow &e_{2}  \\
  x_{1} - x_{3}  & \longleftrightarrow  & e_{3}  \\
x_{4} & \longleftrightarrow & e_{4}. 
\end{array}
\end{equation}


The algebra 
$M^{12}$ in de Graaf's classification is given by 
$[x_{4}, x_{1}] = x_{1}$,  
$[x_{4}, x_{2}] = 2x_{2}$,  
$[x_{4}, x_{3}] =  x_{3}$,   
$[x_{3}, x_{1}] =  x_{2} $.

It is isomorphic to  $\mathfrak{s}_{4,8}$,  with parameter $A = 1$,   
in \v{S}nobl and Winternitz's classification,  which is given by  
$[e_{4}, e_{1}] = 2e_{1}$,
$[e_{4}, e_{2}] = e_{2}$,
$[e_{4}, e_{3}] = e_{3}$, 
$[e_{2}, e_{3}] = e_{1}$. 

The isomorphism is 
\begin{equation}
\begin{array}{llll}
x_{4} & \longleftrightarrow &e_{4}  \\
 x_{1} & \longleftrightarrow &e_{3}  \\
x_{2} & \longleftrightarrow  & e_{1}  \\
x_{3} & \longleftrightarrow & e_{2}. 
\end{array}
\end{equation}


The algebra $M^{7}_{0,1}$ in de Graaf's classification is given by 
$[x_{4}, x_{1}] = x_{2}$,  
$[x_{4}, x_{2}] = x_{3}$,  
$[x_{4}, x_{3}] =  x_{2}$.

It is isomorphic to  $\mathfrak{n}_{1,1} \oplus \mathfrak{s}_{3,1}$,  with parameter $A = -1$  
in \v{S}nobl and Winternitz's classification. 
The isomorphism is 
\begin{equation}
\begin{array}{llll}
x_{1} - x_{3} & \longleftrightarrow &(e_{1}, 0)  \\  
x_{4} & \longleftrightarrow &(0, e_{3})  \\
x_{2} + x_{3} & \longleftrightarrow  & (0, e_{1} )  \\
x_{2} - x_{3} & \longleftrightarrow & (0, e_{2} ). 
\end{array}
\end{equation}


The algebra 
$M^{7}_{0,0}$ in de Graaf's classification is given by 
$[x_{4}, x_{1}] = x_{2}$,  
$[x_{4}, x_{2}] = x_{3}$.

It is isomorphic to  $\mathfrak {n}_{4,1}$
in \v{S}nobl and Winternitz's classification,  which is given by  
$[e_{4}, e_{2}] = - e_{1}$,  
$[e_{4}, e_{3}] = - e_{2}$.   

The isomorphism is 
\begin{equation}
\begin{array}{llll}
-x_{4} & \longleftrightarrow &e_{4}  \\
  x_{1} & \longleftrightarrow &e_{3}  \\  
x_{2} & \longleftrightarrow & e_{2}   \\ 
  x_{ 3} & \longleftrightarrow  & e_{1}. 
\end{array}
\end{equation}
\subsection{Dimension $5$}\label{5dimequivalences}
Since the classification of de Graaf \cite{degraafa} does not include solvable algebras of 
dimension greater than $4$,  we will only identify the five-dimensional solvable subalgebras with respect 
to \v{S}nobl and Winternitz's classification \cite{levi}.

The following isomorphism establishes $\mathfrak{s}_{5,41, A=B=\frac{1}{2}}\cong \langle \algt , \algn_{\p} \rangle$:
\begin{equation}
\begin{array}{ccccccccc}
e_1 & \longleftrightarrow & X_{\alpha} \\
e_2 & \longleftrightarrow&  X_{\alpha+2 \beta}\\
e_3 & \longleftrightarrow&  -X_{\alpha+\beta}\\
e_4& \longleftrightarrow &  \frac{1}{2} T_{0,1}\\
e_5 & \longleftrightarrow &  \frac{1}{2}T_{1,0}.
\end{array}
\end{equation}

The following isomorphism establishes  $\mathfrak{s}_{5, 44} \cong \langle \mathfrak{t}, X_{\beta}, X_{\alpha+\beta},$ $X_{\alpha+2\beta} \rangle$:
\begin{equation}
\begin{array}{ccccccccc}
e_1 & \longleftrightarrow &  2X_{\alpha+2\beta}\\
 e_2& \longleftrightarrow &  X_{\beta}\\
 e_3& \longleftrightarrow &  X_{\alpha+\beta}\\
 e_4& \longleftrightarrow &  \frac{1}{2}T_{1,-1}\\
e_5& \longleftrightarrow &  T_{0,-1}.
\end{array}
\end{equation}

The following isomorphism establishes $\mathfrak{s}_{5, 35, A=\frac{2}{a-1}}\cong  \langle T_{a,1}, \mathfrak{n} \rangle$:
\begin{equation}
\begin{array}{cccccccc}
e_1 & \longleftrightarrow &  2X_{\alpha+2\beta}\\
 e_2& \longleftrightarrow &  -X_{\alpha+\beta}\\
 e_3& \longleftrightarrow &  X_{\alpha}\\
e_4 & \longleftrightarrow &  X_{\beta}\\
e_5 & \longleftrightarrow &  \frac{1}{a-1} T_{a,1}. 
\end{array}
\end{equation}

The following isomorphism establishes $\mathfrak{s}_{5, 35, A=-1}\cong \langle T_{1,-1}, \mathfrak{n} \rangle $:
\begin{equation}
\begin{array}{cccccccc}
e_1& \longleftrightarrow & 2X_{\alpha+2\beta} \\
e_2 & \longleftrightarrow &  -X_{\alpha+\beta}\\
e_3 & \longleftrightarrow &  X_{\alpha}\\
 e_4& \longleftrightarrow &  X_{\beta}\\
e_5 & \longleftrightarrow & \frac{1}{2}T_{1,-1}.
\end{array}
\end{equation}

The following isomorphism establishes $\mathfrak{s}_{5, 37} \cong \langle T_{1,1}, \mathfrak{n} \rangle$:
\begin{equation}
\begin{array}{cccccccccc}
e_1 & \longleftrightarrow &  2X_{\alpha+2\beta}\\
 e_2& \longleftrightarrow &  -X_{\alpha+\beta}\\
 e_3& \longleftrightarrow & X_{\alpha}\\
 e_4& \longleftrightarrow &  X_{\beta}\\
e_5& \longleftrightarrow & \frac{1}{2}T_{1,1}.
\end{array}
\end{equation}

The following isomorphism establishes $\mathfrak{s}_{5, 36} \cong \langle T_{1,0}, \mathfrak{n} \rangle$:
\begin{equation}
\begin{array}{cccccccc}
e_1 & \longleftrightarrow & 2X_{\alpha+2\beta} \\
e_2 & \longleftrightarrow &  -X_{\alpha+\beta}\\
 e_3& \longleftrightarrow &  X_{\alpha}\\
e_4 & \longleftrightarrow &  X_{\beta}\\
 e_5 & \longleftrightarrow &  T_{1,0}.
\end{array}
\end{equation}

The following isomorphism establishes $\mathfrak{s}_{5, 33} \cong \langle T_{0,1}, \mathfrak{n} \rangle$:
\begin{equation}
\begin{array}{cccccccc}
e_1 & \longleftrightarrow &  2X_{\alpha+2\beta}\\
 e_2 & \longleftrightarrow &  -X_{\alpha+\beta}\\ 
e_3 & \longleftrightarrow &  X_{\alpha}\\
 e_4 & \longleftrightarrow &  X_{\beta}\\
e_5  & \longleftrightarrow &  -T_{0,1}.
\end{array}
\end{equation}

\subsection{Dimension $6$}\label{6dimequivalence}
The only solvable subalgebra of $\borel$ of dimension $6$ is 
$\borel$ itself.   Referencing the classification in \cite{levi}, the following isomorphism establishes $\mathfrak{s}_{6, 242} \cong \mathfrak{b}$: 
\begin{equation}
\begin{array}{ccccccccc}
e_1 & \longleftrightarrow &  2X_{\alpha+2\beta}\\
 e_2& \longleftrightarrow &  -X_{\alpha+\beta}\\
 e_3& \longleftrightarrow &  X_{\alpha}\\
 e_4& \longleftrightarrow &  X_{\beta}\\
e_5& \longleftrightarrow &  T_{1,0}\\
e_6 & \longleftrightarrow &T_{\frac{1}{2}, \frac{1}{2}}.
\end{array}
\end{equation}

\begin{landscape}
\begin{small}
\renewcommand{\arraystretch}{1.5}
\begin{table}[h!]
\begin{tabular}{|c|c|c|c|c|c|c|}
  \hline 
  Representative & Conditions & Equivalences & Isomorphism Class & Isomorphism Class\\ 
  &&&(de Graaf \cite{degraafa}) & (\v{S}nobl and Winternitz \cite{levi})\\
  \hline
 \multicolumn{5}{|c|}{Semisimple}  \\
  \hline
$\langle T_{a,1} \rangle$ 
&
 $a \ne 0, \pm 1$ 
    &
 $\langle T_{a,1}    \rangle \sim \langle T_{a', 1} \rangle $
& $J$  &$\mathfrak{n}_{1,1}$\\
&
& 
if and only if 
$a' = \pm a, \pm a^{-1}$ 
& &\\
  \hline
  $\langle T_{1,0} \rangle$ &  &
  & $J$&$\mathfrak{n}_{1,1}$ \\
  \hline
$\langle T_{ 1, 1} \rangle$ &  &
& $J$ &$\mathfrak{n}_{1,1}$\\
\hline 
 \multicolumn{5}{|c|}{Nilpotent} \\
  \hline
  $\langle X_{\alpha} \rangle$ & &  &$J$& $\mathfrak{n}_{1,1}$\\  
      \hline
$\langle X_{\beta} \rangle$ & &  &$J$ &$\mathfrak{n}_{1,1}$\\  
      \hline
$\langle X_{\alpha} +X_{\beta} \rangle$ & & &$J$&$\mathfrak{n}_{1,1}$ \\  
      \hline
  \multicolumn{5}{|c|}{ Non-Trivial Jordan Decomposition} \\
  \hline
  $\langle T_{1,0} + X_{\alpha}  \rangle $ &  &
      & $J$ &$\mathfrak{n}_{1,1}$\\
\hline 
$\langle T_{1,1} + X_{\beta} \rangle$ &  &
  & $J$& $\mathfrak{n}_{1,1}$ \\
\hline 
\end{tabular}
\vspace{2mm}
\caption{One-dimensional subalgebras of $\soo$,  up to conjugacy by $Sp(4, \mathbb{C})$}\label{onedimuu}
\end{table}
\end{small}
\end{landscape}
\begin{landscape}
\begin{small}
\renewcommand{\arraystretch}{1.5}
\begin{table}[h!]
\begin{tabular}{|c|c|c|c|c|c|}
  \hline 
  Representative & Conditions & Equivalences & Isomorphism Class & Isomorphism Class\\ 
  &&&(de Graaf \cite{degraafa}) &(\v{S}nobl and Winternitz \cite{levi})\\
  \hline
 \multicolumn{5}{|c|}{Semisimple}  \\
  \hline
   $\algt$ &&&$K^{1}$ &2$\mathfrak{n}_{1,1}$ \\ 
  \hline
   \multicolumn{5}{|c|}{Containing both Semisimple and Nilpotent Elements}  \\
  \hline
     $\langle T_{3,1}, X_{\alpha} + X_{\beta} \rangle$ &&&$K^{2}$&$\mathfrak{s}_{2,1}$\\
\hline
$\langle T_{a,1} , X_{\alpha} \rangle$ & $  a \ne 0, \pm 1$  & $\langle T_{a,1} , X_{\alpha} \rangle \sim \langle T_{-a,1} , X_{\alpha} \rangle$ &$K^{2}$ &$\mathfrak{s}_{2,1}$\\
\hline

$\langle T_{a,1} , X_{\beta} \rangle$ & $ a \ne 0, \pm 1$ &   $\langle T_{a,1} , X_{\beta} \rangle \sim \langle T_{a^{-1},1} , X_{\beta} \rangle$ &$K^{2}$ &$\mathfrak{s}_{2,1}$\\
\hline

 $\langle  T_{1,0}, X_{\alpha} \rangle$ &&&$K^{1}$&2$\mathfrak{n}_{1,1}$\\ 
\hline
  $ \langle  T_{1,0}, X_{\beta} \rangle$ &&&$K^{2}$&$\mathfrak{s}_{2,1}$\\ 
\hline
 $\langle  T_{1,0}, X_{\alpha  +2 \beta} \rangle $ &&&$K^{2}$&$\mathfrak{s}_{2,1}$\\
\hline
 $\langle  T_{1,1}, X_{\alpha} \rangle$ &&&$K^{2}$&$\mathfrak{s}_{2,1}$\\
\hline 
 $\langle  T_{1,1}, X_{\beta} \rangle$ &&&$K^{1}$&2$\mathfrak{n}_{1,1}$\\
\hline 
 $\langle  T_{1,1}, X_{\alpha+ \beta} \rangle$ &&&$K^{2}$&$\mathfrak{s}_{2,1}$\\
\hline
  \multicolumn{5}{|c|}{Containing no Semisimple Elements, but not Nilpotent}  \\
  \hline
  $\langle T_{1,1}+X_{\beta}, X_{\alpha + 2 \beta} \rangle$ &&&$K^{2}$&$\mathfrak{s}_{2,1}$\\
\hline 
 $\langle T_{1,0}+X_{\alpha }, X_{\alpha + \beta } \rangle$&&&$K^{2}$&$\mathfrak{s}_{2,1}$\\
\hline 
 $\langle T_{1,0}+X_{\alpha }, X_{\alpha + 2\beta } \rangle$&&&$K^{2}$&$\mathfrak{s}_{2,1}$\\
\hline
  \multicolumn{5}{|c|}{Nilpotent}  \\
  \hline
  $\langle X_{\alpha}, X_{\alpha + \beta} \rangle $ &&& $K^{1}$ &2$\mathfrak{n}_{1,1}$\\
\hline 
 $\langle X_{\alpha} ,  X_{\alpha + 2 \beta}  \rangle $ &&& $K^{1}$ & 2$\mathfrak{n}_{1,1}$\\
\hline 
 $\langle X_{\beta} + X_{\alpha}  , X_{\alpha+ 2 \beta} \rangle$ &&& $K^{1}$& 2$\mathfrak{n}_{1,1}$\\
\hline 
\end{tabular}
\vspace{2mm}
\caption{Two-dimensional subalgebras of $\soo$,  up to conjugacy by $Sp(4, \mathbb{C})$}\label{twodimuu}
\end{table}
\end{small}
\end{landscape}


\begin{landscape}
\begin{scriptsize}
\renewcommand{\arraystretch}{1.4}
\begin{table}[h!]
\begin{tabular}{|c|c|c|c|c|c|}
  \hline 
  Representative & Conditions & Equivalences & Isomorphism Class & Isomorphism Class\\ 
  &&&(de Graaf \cite{degraafa}) &(\v{S}nobl and Winternitz \cite{levi})\\
  \hline
 \multicolumn{5}{|c|}{Containing a Cartan Subalgebra}  \\
  \hline
$\langle \algt, X_{\alpha} \rangle$ &   & & $L^{3}_{0}$ & $\mathfrak{n}_{1,1}\oplus \mathfrak{s}_{2,1}$\\
  \hline
  $\langle \algt, X_{\beta} \rangle$ &   & & $L^{3}_{0}$ &$\mathfrak{n}_{1,1}\oplus \mathfrak{s}_{2,1}$\\
  \hline
   \multicolumn{5}{|c|}{Containing a Regular Semisimple Element but not a Cartan Subalgebra}  \\
  \hline
   $\langle  T_{a,1} , X_{\alpha}, X_{\alpha+ \beta} \rangle$ &$ a \ne 0, \pm 1, -3$&& $L^{3}_{-2(a+1)/(a+3)^{2}}$ & $\mathfrak{s}_{3,1, A=\frac{1+2\alpha  + \sqrt{1+4\alpha}}{-2\alpha}}$\\
   &&&&where $\alpha=-2(a+1)/(a+3)^{2}$\\
\hline 
 $\langle  T_{-3,1} , X_{\alpha}, X_{\alpha+ \beta} \rangle$ &&& $L^{4}_{1}$ & $\mathfrak{s}_{3,1, A=-1}$\\
\hline 
 $\langle T_{a,1} , X_{\alpha}, X_{\alpha+ 2\beta} \rangle$  & $a \ne 0, \pm 1$ &  $\langle T_{a,1} , X_{\alpha}, X_{\alpha+ 2\beta} \rangle 
\sim  \langle T_{a^{-1},1} , X_{\alpha}, X_{\alpha+ 2\beta} \rangle$ & $L^{3}_{ - a/(a+1)^{2}}$&  $\mathfrak{s}_{3,1, A=\frac{1+2\alpha   + \sqrt{1+4\alpha}}{-2\alpha}}$\\
 &&&& where $\alpha=- a/(a+1)^{2} $\\
\hline 
 $\langle T_{3,1} , X_{\alpha}+ X_{\beta},  X_{\alpha+ 2\beta} \rangle$ &&&  $L^{3}_{-3/16}$ &  $\mathfrak{s}_{3,1, A=1/3}$ \\
  \hline
  \multicolumn{5}{|c|}{Containing a Non-regular Semisimple Element but not a Cartan Subalgebra}  \\      
      \hline
 $\langle  T_{1,0}, X_{\alpha}, X_{\alpha + \beta} \rangle$ &&&  $L^{3}_{0}$  & $\mathfrak{n}_{1,1}\oplus \mathfrak{s}_{2,1}$\\  
 \hline
 $\langle  T_{1,0}, X_{\alpha}, X_{\alpha + 2 \beta} \rangle$ &&&  $L^{3}_{0}$ &$\mathfrak{n}_{1,1}\oplus \mathfrak{s}_{2,1}$  \\  
 \hline
 $\langle  T_{1,0}, X_{\alpha + \beta}, X_{\alpha + 2\beta} \rangle$ &&&   $L^{3}_{-2/9}$ & $\mathfrak{s}_{3,1, A=1/2}$\\  
\hline
 $\langle T_{1,-1} , X_{\alpha  + \beta},  X_{\alpha + 2 \beta} \rangle$ &&&   $L^{3}_{0}$&  $\mathfrak{n}_{1,1}\oplus \mathfrak{s}_{2,1}$\\
 \hline
 $\langle T_{1,-1} , X_{\alpha},  X_{\alpha + 2\beta} \rangle$ &&&   $L^{4}_{1}$ & $\mathfrak{s}_{3,1, A=-1}$\\
 \hline
 $\langle T_{1,-1} , X_{\beta} ,  X_{\alpha + 2\beta} \rangle$ &&& $L^{2}$ & $\mathfrak{s}_{3,1, A=1}$\\
 \hline
 $\langle T_{1,1} , X_{\alpha}  , X_{\alpha + 2 \beta} \rangle$ &&&  $L^{2}$  & $\mathfrak{s}_{3,1, A=1}$\\
\hline
  \multicolumn{5}{|c|}{Containing no Semisimple Elements but not Nilpotent}  \\      
      \hline
 $\langle T_{1,1} + X_{\beta}, X_{\alpha+\beta} , X_{\alpha+2\beta} \rangle$ &&& $L^{3}_{-1/4}$ & $\mathfrak{s}_{3,2}$\\ 
\hline 
 $\langle T_{1,-1} + X_{\alpha+\beta} , X_{\alpha}, X_{\alpha+2\beta} \rangle$ &&& $L^{4}_{1}$& $\mathfrak{s}_{3,1, A=-1}$ \\
\hline 
 $\langle T_{1,0} + X_{\alpha} , X_{\alpha + \beta}, X_{\alpha+2\beta} \rangle$ &&& $L^{3}_{-2/9}$ &$\mathfrak{s}_{3,1, A=1/2}$\\ 
\hline
  \multicolumn{5}{|c|}{Nilpotent }  \\
  \hline
  $\algn_{\p}$ & &  &$L^{1}$ & $3\mathfrak{n}_{1,1}$\\  
      \hline
$\langle X_{\beta},  X_{\alpha + \beta}, X_{\alpha + 2 \beta} \rangle$ & &  &$L^{4}_{0}$ &$\mathfrak{n}_{3,1}$\\  
      \hline
$\langle X_{\alpha} + X_{\beta},  X_{\alpha + \beta}, X_{\alpha + 2 \beta} \rangle$ & & &$L^{4}_{0}$& $\mathfrak{n}_{3,1}$\\  
      \hline
\end{tabular}
\vspace{2mm}
\caption{Three-dimensional solvable subalgebras of $\soo$,  up to conjugacy by $Sp(4, \mathbb{C})$}\label{threedimuu}
\end{table}
\end{scriptsize}
\end{landscape}


\begin{landscape}
\begin{scriptsize}
\renewcommand{\arraystretch}{1.5}
\begin{table}[h!]
\begin{tabular}{|c|c|c|c|c|}
  \hline 
  Representative & Conditions & Equivalences & Isomorphism Class&Isomorphism Class\\ 
  &&&(de Graaf \cite{degraafa}) &(\v{S}nobl and Winternitz \cite{levi})\\
  \hline
 \multicolumn{5}{|c|}{Containing a Cartan Subalgebra}  \\
  \hline
   $  \langle \algt , X_{\alpha}, X_{\alpha + \beta} \rangle $ &&&$M^{8}$& $\mathfrak{s}_{4,12}$\\ 
\hline
 $\langle \algt , X_{\alpha}, X_{\alpha + 2\beta} \rangle$ &&&$M^{8}$& $\mathfrak{s}_{4,12}$ \\ 
\hline
 \multicolumn{5}{|c|}{Containing a Regular Semisimple Element but not a Cartan Subalgebra}  \\
  \hline
   $    \langle T_{a,1}, \algn_{\p} \rangle$ & $a \ne 0, \pm 1$ &  $    \langle T_{a,1}, \algn_{\p} \rangle \sim \langle T_{a^{-1},1}, \algn_{\p} \rangle$ 
 &  $M^{6}_{A,B}$,  where & see Subsection \ref{4dimequivalences}\\
     &  & $\sim \langle T_{-a,1}, X_{\beta}, X_{\alpha + \beta}, X_{\alpha + 2 \beta} \rangle$ & $A = \frac{4a}{27(a+1)^{2}}$,  
    $B = -\frac{2(a^{2}+4a+1)}{9(a+1)^{2}}$&\\
  \hline
  $\langle T_{a,1}, X_{\beta}, X_{\alpha + \beta}, X_{\alpha + 2 \beta} \rangle$ & $a \ne 0, \pm 1$ & $\langle T_{a,1}, X_{\beta}, X_{\alpha + \beta}, X_{\alpha + 2 \beta} \rangle$ &$M^{13}_{(1-a^{2})/(4a^{2})}$&  $\mathfrak{s}_{4, 8, A= \frac{1+2\alpha   + \sqrt{1+4\alpha}}{-2\alpha}}$ \\
   &  & $\sim \langle T_{-a,1}, X_{\beta}, X_{\alpha + \beta}, X_{\alpha + 2 \beta} \rangle$ && where $\alpha=(1-a^{2})/(4a^{2})$ \\
  \hline
    $ \langle T_{3,1}, X_{\alpha} + X_{\beta} , X_{\alpha + \beta}, X_{\alpha + 2 \beta} \rangle$ &&  & $M^{13}_{-2/9}$ & $\mathfrak{s}_{4, 8,  A=1/2}$\\
\hline

  \multicolumn{5}{|c|}{Containing a Non-regular Semisimple Element but not a Cartan Subalgebra}  \\      
      \hline
 $\langle T_{0,1}, \algn_{\p} \rangle$ &&&  $M^{6}_{0,-2/9}$ & $\mathfrak{n}_{1,1}\oplus \mathfrak{s}_{3, 1, A=1/2}$ \\
\hline
 $\langle T_{0,1}, X_{\beta}, X_{\alpha + \beta}, X_{\alpha + 2 \beta} \rangle$ &&&  $M^{14}_{1}$&  $\mathfrak{s}_{4,6}$ \\ 
\hline
 $\langle T_{1,0}, X_{\beta}, X_{\alpha + \beta}, X_{\alpha + 2 \beta} \rangle$ &&&  $M^{12}$  & $\mathfrak{s}_{4,8, A=1}$\\
\hline
 $\langle T_{1,1}, \algn_{\p} \rangle$ &&&  $M^{2}$ &  $\mathfrak{s}_{4, 3, A=B=1}$\\
\hline
 $\langle T_{1,-1}, \algn_{\p} \rangle$ &&&  $M^{7}_{0,1}$ & $\mathfrak{n}_{1,1}\oplus \mathfrak{s}_{3,1, A=-1}$  \\
\hline
 $\langle T_{1,1}, X_{\beta}, X_{\alpha + \beta}, X_{\alpha + 2 \beta} \rangle$ &&&  $M^{13}_{0}$&  $\mathfrak{s}_{4, 11}$ \\
\hline
 \multicolumn{5}{|c|}{Containing no Semisimple Elements but not Nilpotent}  \\
  \hline
  $\langle T_{1,1} + X_{\beta}, X_{\alpha}, X_{\alpha+ \beta}, X_{\alpha+ 2\beta} \rangle$ &&& $M^{6}_{1/27, -1/3}$ & $\mathfrak{s}_{4, 2}$\\
\hline
$\langle  T_{1,0} + X_{\alpha} , X_{\beta},  X_{\alpha+ \beta}, X_{\alpha+ 2\beta} \rangle$ &&& $M^{13}_{-1/4}$ & $\mathfrak{s}_{4, 10}$ \\
\hline
\multicolumn{5}{|c|}{Nilpotent Subalgebras}  \\
  \hline
  $ \algn$ &&&  $M^{7}_{0,0}$ &  $\mathfrak{n}_{4,1}$\\
\hline
 \end{tabular}
\vspace{2mm}
\caption{Four-dimensional solvable subalgebras of $\soo$,  up to conjugacy by $Sp(4, \mathbb{C})$}\label{fourdimuu}
\end{table}
\end{scriptsize}
\end{landscape}



\begin{landscape}
\renewcommand{\arraystretch}{1.5}
\begin{table}[h!]
\begin{tabular}{|c|c|c|c|c|c|}
  \hline 
 Dimension & Representative & Conditions &  Isomorphism Class\\ 
 && & (\v{S}nobl and Winternitz \cite{levi})\\
  \hline
 \multicolumn{4}{|c|}{Containing a Cartan Subalgebra}  \\
  \hline
$5$  &  $  \langle \algt , \algn_{\p} \rangle $ & & $\mathfrak{s}_{5,41, A=B=\frac{1}{2}}$\\ 
\hline
$5$ & $\langle \algt , X_{\beta}, X_{\alpha + \beta}, X_{\alpha + 2\beta} \rangle$ &&  $\mathfrak{s}_{5,44}$\\ 
\hline
 \multicolumn{4}{|c|}{Containing a Regular Semisimple Element but not a Cartan Subalgebra}  \\
  \hline
 $5$   &$    \langle T_{a,1}, \algn \rangle$ & $a \ne 0, \pm 1$    
 &  $\mathfrak{s}_{5, 35, A=\frac{2}{a-1}}$ \\
  \hline
   \multicolumn{4}{|c|}{Containing a Non-regular Semisimple Element but not a Cartan Subalgebra}  \\
  \hline
$5$  & $\langle T_{1,-1}, \algn \rangle$ &&    $\mathfrak{s}_{5, 35, A=-1}$\\
\hline
$5$& $\langle T_{1,1}, \algn \rangle$ &&     $\mathfrak{s}_{5, 37}$\\
\hline
$5$& $\langle T_{1,0}, \algn \rangle$ &&   $\mathfrak{s}_{5, 36}$  \\
\hline
$5$& $\langle T_{0,1}, \algn \rangle$ &&  $\mathfrak{s}_{5, 33}$ \\
\hline 
\hline
$6$ & $ \borel$ &&    $\mathfrak{s}_{6, 242}$\\
\hline
 \end{tabular}
\vspace{2mm}
\caption{Five- and six-dimensional solvable subalgebras of $\soo$,  up to conjugacy by $Sp(4, \mathbb{C})$}\label{fivesixdimuu}
\end{table}
\end{landscape}


\section{Conclusions}
 The semisimple subalgebras of the rank $2$ symplectic Lie algebra $\soo$ are well-known \cite{degraafc,degraafd}, and we recently classified its Levi decomposable subalgebras \cite{dc2}.  In this article, we classified the solvable subalgebras of $\soo$, up to inner automorphism  (equivalently,  up to conjugacy by the symplectic group $Sp(4,\mathbb{C})$).   By Levi's theorem, this completes the classification of the subalgebras of $\soo$.  

We summarized the classification in Tables \ref{onedimuu} to \ref{fivesixdimuu}.  The classification is given with respect to the partial classification of solvable Lie algebras of de Graaf \cite{degraafa}, and  that described by \v{S}nobl and Winternitz in \cite{levi}.

We have already classified the subalgebras of the special orthogonal algebra $\mathfrak{so}(4,\mathbb{C})$ \cite{drsof}, and the subalgebras of the special linear algebra $\mathfrak{sl}(3,\mathbb{C}) $ \cite{a2}.  And, Mayanskiy \cite{may} recently posted a classification of the subalgebras of the exceptional Lie algebra $G_2$. Hence, with this article, the classification of the subalgebras of the  rank $2$ semisimple Lie algebras is complete.

\section*{Acknowledgements}

The work of A.D. is partially supported by a research grant from the Professional Staff
Congress/City University of New York (Grant No. TRADA-47-36). The
work of J.R. is partially supported by the Natural Sciences and Engineering Research Council
(Grant No. 3166-09).  The authors would also like to thank the referee for his/her valuable comments.

\end{document}